\documentclass[11pt]{article}

\oddsidemargin  0pt     % Left margin on odd-numbered pages.
\evensidemargin 0pt     % Left margin on even-numbered pages.
\marginparwidth 40pt    % Width of marginal notes.
\marginparsep 10pt      % Horizontal space between outer margin and
                        % marginal note
\topmargin 0pt          % Nominal distance from top of page to top of
                        % box containing running head.
\headsep 10pt           % Space between running head and text.
\textheight 8.8in       % Height of text(including footnotes and figures,
                        % excluding running head and foot).
\textwidth 6.6in        % Width of text line

\usepackage[latin1]{inputenc}
\usepackage{amsthm,amsmath,amssymb,bm}
\usepackage{graphicx}
\usepackage{color}
\usepackage{verbatim}
\usepackage{float}
\usepackage{mathrsfs}
\usepackage{hyperref}

\usepackage{array}   % for \newcolumntype macro
\newcolumntype{C}{>{$}c<{$}}
\newcolumntype{L}{>{$}l<{$}}

\usepackage[short labels]{enumitem}
\setlist[enumerate]{topsep=0pt,itemsep=-1ex,partopsep=1ex,parsep=1ex}
\setlist[itemize]{topsep=0pt,itemsep=-1ex,partopsep=1ex,parsep=1ex}

\usepackage{bbm}

\setlength{\parskip}{0.3ex}

\setlength{\abovedisplayskip}{0pt}
\setlength{\belowdisplayskip}{0pt}

\theoremstyle{plain}
\newtheorem{theo}{Theorem}[section]
\newtheorem{prop}[theo]{Proposition}
\newtheorem{lemma}[theo]{Lemma}
\newtheorem{cor}[theo]{Corollary}

\newtheorem{prob}[theo]{Problem}

\theoremstyle{definition}
\newtheorem{defn}[theo]{Definition}

\newtheorem{eg}[theo]{Example}

\newcommand{\mc}[1]{\mathcal{#1}}
\newcommand{\mb}[1]{\mathbb{#1}}

\newcommand{\ms}[1]{\mathscr{#1}}

\newcommand{\sm}{\setminus}
\newcommand{\ov}{\overline}

\newcommand{\wt}{\widetilde}
\newcommand{\eps}{\varepsilon}

\newcommand{\aA}{\alpha}
\newcommand{\bB}{\beta}
\newcommand{\gG}{\gamma}
\newcommand{\dD}{\delta}

\newcommand{\zZ}{\zeta}
\newcommand{\lL}{\lambda}

\newcommand{\sS}{\sigma}

\newcommand{\DD}{\Delta}
\newcommand{\OO}{\Omega}

\newcommand{\Cols}{\mathscr{C}}
\newcommand{\con}{\mathrm{con}}
\newcommand{\abs}{\mathrm{abs}}
\newcommand{\app}{\mathrm{app}}
\newcommand{\col}{\mathrm{col}}
\newcommand{\vx}{\mathrm{vx}}
\newcommand{\colvx}{\mathrm{colvx}}

\usepackage[dvipsnames]{xcolor}

\title{Transversals via regularity}
\author{Yangyang Cheng\thanks{Mathematical Institute,
University of Oxford, Oxford, UK. Email: yangyang.cheng@maths.ox.ac.uk.}
\thanks{Yangyang Cheng was supported by a PhD studentship of ERC Advanced Grant 883810.}
\and Katherine Staden\thanks{School of Mathematics and Statistics, The Open University, Walton Hall, Milton Keynes, UK. Email: katherine.staden@open.ac.uk.}
\thanks{Katherine Staden was supported by EPSRC Fellowship EP/V025953/1.}}

\begin{document}

\maketitle

\begin{abstract}
Given graphs $G_1,\ldots,G_s$ all on the same vertex set and a graph $H$
with $e(H) \leq s$,
a copy of $H$ is \emph{transversal} or \emph{rainbow} if it contains
at most one edge from each $G_c$.
%When $s=e(H)$, such a copy contains exactly one edge from each $G_i$.
We study the case when $H$ is spanning and explore how the regularity
blow-up method, that has been so successful in the uncoloured setting,
can be used to find transversals.
We provide the analogues of the tools required
to apply this method in the transversal setting. Our main result is a blow-up lemma for
transversals that applies to separable bounded degree graphs $H$.

Our proofs use weak regularity in the $3$-uniform hypergraph whose edges are those $xyc$
where $xy$ is an edge in the graph $G_c$. 
We apply our lemma to give a large class of spanning $3$-uniform linear hypergraphs $H$ such that
any sufficiently large uniformly dense $n$-vertex $3$-uniform hypergraph with minimum vertex degree $\Omega(n^2)$ contains $H$ as a subhypergraph. 
This extends work of Lenz, Mubayi and Mycroft. %, and of Ding, Han, Sun, Wang and Zhou.
\end{abstract}

\section{Introduction}

\subsection{Transversal embedding}

The problem of deciding whether an $n$-vertex graph $G$ contains a given subgraph $H$ is a central topic in graph theory.
Since this problem is NP-complete, much of the research on this topic has focused on finding sufficient
conditions on $G$ that guarantee the presence of $H$.
%A simple and natural such condition is the minimum degree $\dD(G)$ of $G$,
Given a graph parameter $\pi$, we seek the \emph{best possible} bound $\pi_{H,n}$ such that if $\pi(G) > \pi_{H,n}$, then $G$ contains a
copy of $H$, whereas there are graphs $G$ with $\pi(G)\leq\pi_{H,n}$ which are $H$-free.

In this paper, we investigate the generalisation of this problem to graph collections, also known as a multilayer graph.
Here, we are given graphs $G_1,\ldots,G_s$ on the same vertex set, where $s \geq e(H)$,
and we seek a \emph{transversal} copy of $H$, which is a copy of $H$ containing at most one
edge from each of the graphs $G_1,\ldots,G_s$.
We often think of each $G_c$ having the colour $c$, so a transversal copy of $H$ is
also called a \emph{rainbow} copy.

Again, we seek a best possible condition on $\pi$ which guarantees
the existence of $H$. That is, we seek the minimum $\pi_{H,n}^s$ such that if $G_1,\ldots,G_s$ are any graphs on the same vertex set of size $n$ and $\pi(G_c) > \pi_{H,n}^s$ for all $1 \leq c \leq s$,
then there is a transversal copy of $H$.
Observe that we recover the original `uncoloured' problem when $G_1=\ldots=G_s$,
so the transversal embedding problem is indeed a generalisation.
Ideally, we take $s=e(H)$ graphs in the collection and are most interested in determining $\pi_{H,n}^{\rm col} := \pi_{H,n}^{e(H)}$.

There has been a lot of recent progress in this area. The central question about transversal embeddings, posed by Joos and Kim~\cite{JK}, is whether the addition of colours changes the answer to the problem. That is, given some graph parameter $\pi$, when do we have $\pi_{H,n} = \pi_{H,n}^{\rm col}$?
%Note that clearly $\pi_{H,n}^{\rm col} \geq \pi_{H,n}$.
When equality does hold, we say that the embedding problem is \emph{colour-blind}.

We already have a non-example of colour-blindness in the case of the triangle and the size parameter. Mantel's theorem from 1907 states that $e(G)>\lfloor n^2/4 \rfloor$ guarantees that an $n$-vertex graph $G$ contains a triangle,
whereas the complete balanced bipartite graph shows that this is best possible. So $e_{K_3,n}=\lfloor n^2/4\rfloor$.
However, Aharoni, DeVos, de la Maza, Montejano and \v{S}\'amal \cite{Aharoni} proved that whenever graphs $G_1,G_2,G_3$ on the same set of $n$ vertices satisfy $e(G_c)>an^2$ for all $c=1,2,3$, where $a:=\frac{26-2\sqrt{7}}{81} \approx 0.2557>1/4$, then there is a rainbow triangle,
while there is an example which shows $a$ cannot be decreased.
That is, $e_{K_3,n}^{\rm col} = \lfloor an^2 \rfloor$.
The transversal embedding problem for larger cliques is still open.

When one is interested in embedding a spanning graph, it makes sense to consider minimum degree
conditions rather than size conditions, as, for example, a graph could be very dense but contain an isolated vertex, and therefore not contain a copy of any given spanning graph with minimum degree at least $1$. Let $M_n$ and $C_n$ be the perfect matching and Hamilton cycle on $n$ vertices respectively.
Joos and Kim \cite{JK} proved that $\dD_{C_n,n}^{\rm col}=\dD_{C_n,n}$ and $\dD_{M_n,n}^{\rm col}=\dD_{M_n,n}$ (which both have the common value $=\lceil n/2\rceil-1$);
that is, Hamilton cycle embedding and matching embedding are both colour-blind with respect to minimum degree.

An easier question is to ask for approximately best possible conditions. For minimum degree, this means
we would like to find $\wt{\dD}_H$ and $\wt{\dD}_H^{\rm col}$, where
\begin{itemize}
\item $\wt{\dD}_H$ is the minimum $\dD$ such that for all $\aA>0$, any sufficiently large $n$-vertex graph $G$ with $\dD(G) \geq (\dD+\aA) n$ contains a copy of $H$; and
\item $\wt{\dD}_H^{\rm col}$ is the minimum $\dD$ such that for all $\aA>0$, any collection $G_1,\ldots,G_{e(H)}$ of sufficiently large $n$-vertex graphs with $\dD(G_c) \geq (\dD+\aA) n$ for all $1 \leq c \leq e(H)$ contains a transversal copy of $H$.
\end{itemize}
So the results of~\cite{JK} imply that $\wt{\dD}^{\rm col}_{C_n}=\wt{\dD}^{\rm col}_{M_n}=\frac{1}{2}$; this approximate
version was earlier proved by Cheng, Wang and Zhao~\cite{Cheng1}.
If $\wt{\dD}_H=\wt{\dD}_H^{\rm col}$, then we say that $H$ is \emph{approximately colour-blind}.

Montgomery, M\"uyesser and Pehova~\cite{MMP}
determined which $F$-factors are approximately colour-blind for any given small graph $F$,
and showed that spanning trees with maximum degree $o(\frac{n}{\log n})$ are approximately colour-blind.
They observed that some spanning graphs are very far from being colour-blind: taking $H$ to be the disjoint union of copies of $K_{2,3} \cup C_4$, it holds that $\wt{\dD}_H \leq \frac{4}{9}$ by a result of K\"uhn and Osthus~\cite{kuhn2009minimum}, whereas $\wt{\dD}_H^{\rm col} \geq \frac{1}{2}$.
Gupta, Hamann, M\"uyesser, Parczyk and Sgueglia~\cite{GHMPS} showed that powers of Hamilton cycles are approximately colour-blind.

They also showed, improving results of Cheng, Han, Wang, Wang and Yang~\cite{Cheng3}, that Hamilton $\ell$-cycles in $k$-uniform hypergraphs are approximately `$d$-colour-blind' for some range of $\ell$, $k$ and $d$,
which means with respect to minimum $d$-degree, which we do not define here.

The aim of~\cite{MMP} was to provide a fairly general approach for transversal embedding problems that used the corresponding uncoloured embedding result as a black box.
Roughly speaking, their approach is designed to work when the graph $H$ to be embedded is made up of small almost-disconnected blocks: they prove results for $F$-factors, which are made up of vertex-disjoint copies of $F$, and for trees which are made up of an ordered sequence of small subtrees each sharing one vertex with a previous tree.
Similarly,~\cite{GHMPS} gave a widely applicable sufficient condition for approximate colour-blindness, which applies to graphs obtained by `cyclically gluing' copies of a smaller graph (e.g.~a Hamilton cycle is obtained by `cyclically gluing' copies of an edge). Nevertheless, the $H$ to which these two results apply are fairly specific, and in this paper we build on~\cite{MMP} to develop a method for $H$ which are `more connected' than the above.

Very recently, during the preparation of this paper, Chakraborti, Im, Kim and Liu~\cite{CIKL} made important progress in this direction by proving a `transversal bandwidth theorem'.
A graph has \emph{bandwidth} $b$ if there is an ordering $v_1,\ldots,v_n$ of its vertices such that $|i-j| \leq b$ whenever $v_iv_j$ is an edge.
Their result states that if a bounded degree graph $H$ of sublinear bandwidth has chromatic number $k$, then $\wt{\dD}^{\rm col}_H\leq 1/k$.
This extends several of the above results, and is asymptotically best possible for many graphs $H$, as well as generalising the original bandwidth theorem of B\"ottcher, Schacht and Taraz~\cite{bottcher2009proof} for single graphs.
The proof uses similar ideas to the proof of the main result of our paper, a transversal blow-up lemma, which we introduce in the next section.

\subsection{The regularity-blow-up method for transversals}

The so-called `regularity-blow-up method' has been employed to prove many results concerning the embedding of a given spanning subgraph $H$ in a large graph $G$. Such proofs typically run along the following lines. Apply Szemer\'edi's regularity lemma~\cite{Szemeredi} to obtain a constant-size `reduced graph' $R$ of $G$, which approximates the structure of $G$: vertices of $R$ correspond to disjoint vertex clusters in $G$, and edges of $R$ correspond to \emph{regular} pairs of vertex clusters, which informally means that the edges between them are randomlike and therefore easy to embed into. There may be a small number of `exceptional' vertices which are not part of this structure. Next, find a suitable subgraph $H'$ of $R$ which is simpler than $H$, and often consists of many small connected components (whereas $H$ could be connected). Next, embed small pieces of $H$ which connect the components of $H'$. At this stage, some vertices may need to be moved from the structure into the exceptional set to make clusters balanced and to ensure regular pairs have sufficiently large minimum degree (they are \emph{superregular}). Then incorporate the exceptional vertices: for each such vertex $v$ find a component of $H'$ where $v$ has many suitable neighbours and a few vertices inside the $H'$-structure which can be used along with $v$ to embed a small part of $H$. Finally apply the \emph{blow-up lemma} of Koml\'os, S\"ark\'ozy and Szemer\'edi~\cite{komlos1997blow} to embed most of $H$ into $H'$ allowing for the restrictions imposed by the initial embedding. The blow-up lemma states that, for the purposes of embedding a possibly spanning bounded degree graph, a regular pair behaves the same as a complete bipartite graph.

The primary goal of this paper is to provide the tools needed to apply the regularity-blow-up method
to obtain transversal embeddings, following the same steps as above. 
The basic idea is that one can think of a graph collection $\bm{G}=(G_c: c \in \Cols)$ with common vertex set $V$ as the $3$-uniform hypergraph $G^{(3)}$ with vertex and edge sets
$$
V(G^{(3)}) = V \cup \Cols\quad\text{and}\quad E(G^{(3)}) = \{xyc: xy \in G_c\}. 
$$
%Then a regularity lemma for $3$-uniform hypergraphs can be applied.
This natural idea has been noticed and used before, for example in~\cite{JK}, where it was noted that a result of Aharoni, Georgakoupoulos and Spr\"ussel~\cite{aharoni2009perfect} on matchings in $k$-partite $k$-uniform hypergraphs can be used to find a transversal matching in a bipartite graph collection; and in \cite{Cheng3}, where the weak regularity lemma was applied to $G^{(3)}$ to find an almost spanning clique factor.  
In this paper, we take the idea further by using this perspective to provide the tools one would require to use the regularity-blow-up method for graph collections.
To this end, we
\begin{itemize}
\item define regularity, superregularity, reduced graphs and provide a regularity lemma for graph collections;
\item state some of the standard tools for regularity arguments, such as a `slicing lemma' which states that regularity is inherited in subpairs of regular pairs, degree inheritance in the reduced graph, and various embedding lemmas with target sets, candidate sets and prescribed colours;
\item prove a blow-up lemma for `separable graphs', which we define shortly.
\end{itemize}
The first item is mainly a convenient reformulation of weak regularity for $3$-uniform hypergraphs
and does not require any new ideas; nonetheless the statements are useful to have.
The third (which builds upon the tools developed in the second item) is our main contribution, and the proof uses the original blow-up lemma combined with colour absorption ideas from~\cite{MMP}.

We envisage that our blow-up lemma will be useful in solving various transversal embedding problems, making it a viable alternative to the commonly used absorption technique. Furthermore, there are cases where the blow-up lemma is a more appropriate tool. For instance, when attempting to embed a given transversal $2$-factor (a spanning $2$-regular subgraph) or, more generally, a graph with low bandwidth, the absorption technique is not as effective. In such situations, the blow-up lemma can still be employed for successful embedding. For example, our blow-up lemma can be used to embed powers of Hamilton cycles, providing an alternative proof of a result in~\cite{GHMPS}. 
Abbasi~\cite{abbasi1998spanning}, proving a conjecture of El-Zahar~\cite{el1984circuits}, used the original blow-up lemma to obtain the best possible minimum degree bound in a large graph containing a given $2$-factor; our blow-up lemma could be useful in proving a transversal version of this result.
Another notable advantage of the blow-up lemma is its ability to aid in characterising extremal constructions. This is helpful for determining exact bounds in these types of problems, by utilising the stability method commonly employed in non-transversal graph embeddings. This natural problem was recently raised in \cite{GHMPS}. 
We discuss applications to transversal embedding problems further in Section~\ref{sec:conclude}.

The secondary goal of the paper is to apply our transversal blow-up lemma to embeddings in uniformly dense $3$-uniform hypergraphs.
We introduce these `randomlike' hypergraphs, and our results in this direction, in Section~\ref{sec:quasi}.
Indeed, our blow-up lemma can be formulated as a blow-up lemma for $3$-uniform hypergraphs. Consequently, there is also potential to employ this lemma in generalising results such as those of K\"uhn and Osthus~\cite{kuhn2006loose} who studied the problem of embedding loose Hamiltonian cycles under vertex degree conditions.

All our results apply to `separable graphs' with bounded maximum degree.
An $n$-vertex graph $H$ is \emph{$\mu$-separable} if there is $X \subseteq V(H)$ of size at most $\mu n$ such that $H-X$ consists of components of size at most $\mu n$.
Separable graphs (with suitable small $\mu$) include $F$-factors for fixed $F$, trees, $2$-regular graphs, powers of a Hamilton cycle, and graphs of small bandwidth.

%Given not necessarily distinct vertex sets $A,B \subseteq V(G)$ of a graph $G$, we write $e_G(A,B) := $

%In the next section we state a straightforward application. In a forthcoming paper we aim to provide more involved applications.

\subsection{Transversals in uniformly dense graph collections}\label{sec:quasicol}

Our first main result concerns transversal embeddings of spanning graphs inside a quasirandom graph collection. A `quasirandom' condition is one possessed by a random graph of a similar density. Our condition is that for every linear subset of vertices and linear subset of colours, the total number of edges in these colours spanned by the subset is large.
A graph collection satisfying this condition is said to be \emph{uniformly dense}.
We also require that the number of edges of each colour and the total degree of each vertex is $\Omega(n^2)$, where the graph collection has a common vertex set of size $n$. This condition cannot be completely removed, since if a colour is not present or if a vertex is isolated, we certainly cannot find a transversal copy of a spanning graph $H$.
A graph collection satisfying both conditions is \emph{super uniformly dense}, in analogy with regular and superregular.
We are interested in the following question: for which (spanning) graphs $H$ must any super uniformly dense graph collection contain a transversal copy of $H$?

\begin{defn}[Uniformly dense graph collection]
Given $d,\eta>0$, we say that a graph collection $\bm{G}=(G_c: c \in \Cols)$ with vertex set $V$ of size $n$ is \emph{$(d,\eta)$-dense} if
for all $A,B \subseteq V$ and $\Cols' \subseteq \Cols$, we have
    $$
    \sum_{c \in \Cols'}e(G_c[A,B]) \geq d|\Cols'||A||B| - \eta n^3,
    $$
where $e(G_c[A,B])$ denotes the number of pairs $(x,y) \in A \times B$
for which $\{x,y\}$ is an edge of $G_c$.
We informally refer to a graph collection $\bm{G}$ on $n$ vertices that is $(d,\eta)$-dense for parameters $0<1/n \ll \eta \ll d \leq 1$ as \emph{uniformly dense}, and if additionally there is $\aA \gg \eta$ for which $\sum_{c \in \Cols}d_{G_c}(v) \geq \aA|\Cols| n$ for all $v \in V$ and $e(G_c) \geq \aA n^2$ for all $c \in \Cols$, then $\bm{G}$ is \emph{super uniformly dense}.
\end{defn}

Note that the condition is vacuous unless $A,B$ and $\Cols'$ are of linear size (in the number $n$ of vertices).
We prove that a transversal copy of a separable graph can be found in a super uniformly dense graph collection.

\begin{theo}\label{th:quasi}
For all $\DD,d,\dD,\aA>0$, there are $\eta,\mu>0$ and $n_0 \in \mb{N}$ such that the following holds for all integers $n \geq n_0$.
Let $\bm{G}=(G_c: c \in \Cols)$ be a $(d,\eta)$-dense graph collection on a common vertex set $V$ of size $n$, where $|\Cols| \geq \dD n$, and suppose that
for all $v \in V$ we have $\sum_{c \in \Cols}d_{G_c}(v) \geq \aA|\Cols|n$,
and for all $c \in \Cols$ we have $e(G_c) \geq \aA n^2$.
Then $\bm{G}$ contains a transversal copy of any given $\mu$-separable graph $H$ on $n$ vertices with $\DD(H) \leq \DD$ and $e(H) = |\Cols|$.
\end{theo}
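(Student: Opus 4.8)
The plan is to deduce the result from the transversal blow-up lemma by implementing the regularity-blow-up scheme outlined in the introduction. First I would apply the regularity lemma for graph collections (the first tool listed) to $\bm{G}$ with a small parameter $\eps$, obtaining a reduced graph $R$ on the cluster set $\{V_1,\ldots,V_k\}$ together with a colour partition, where an edge of $R$ records an $\eps$-regular triple (pair of clusters, set of colours) of reasonable density. The key point is to translate the $(d,\eta)$-density of $\bm{G}$ into a density/minimum-degree condition on $R$: since every linear pair of clusters spanned by a linear set of colours carries many edges, a standard averaging/counting argument shows that $R$ is itself `dense' in the analogous multilayer sense, and crucially that almost every pair of clusters is $\eps$-regular in almost every colour. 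Likewise, the vertex-degree hypothesis $\sum_c d_{G_c}(v)\ge\aA|\Cols|n$ passes to a minimum total degree condition in $R$, and $e(G_c)\ge\aA n^2$ ensures no colour is used up. I would then pass to a subgraph $R'\subseteq R$ in which the relevant pairs are superregular and the colour budget is balanced, using the slicing lemma and degree inheritance from the toolbox.

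Next I would find, inside $R'$ (or rather inside the blow-up $R'[t]$ obtained by replacing each cluster by $t$ copies, or directly inside $R'$ via the component structure), a spanning structure into which the $\mu$-separable graph $H$ can be split: because $H$ is $\mu$-separable with $\DD(H)\le\DD$, we can delete a set $X$ of at most $\mu n$ vertices so that $H-X$ has components of size at most $\mu n$, and then distribute these small components among the clusters so that the number of vertices assigned to each cluster matches its size and the number of edges assigned to each colour matches its multiplicity. This is the usual `assignment' step; the density/degree conditions on $R$ guarantee enough room, and a defect-Hall or flow argument balances the counts exactly. The exceptional vertices of $X$, which play the role of the vertices not covered by the basic structure, are incorporated by connecting each to a nearby component as in the uncoloured method — here the vertex-degree condition of $\bm{G}$ is exactly what lets us find, for each $v\in X$, a cluster and a colour in which $v$ has many neighbours.

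Finally I would invoke the transversal blow-up lemma (our main result, valid for separable bounded-degree $H$) to embed the bulk of $H$ into the superregular structure $R'$ respecting the assignment, the target/candidate sets coming from the treatment of $X$, and the prescribed colour classes; combining this with the embedding of the $O(\mu n)$ exceptional and connecting pieces yields a transversal copy of $H$ in $\bm{G}$. The parameters are chosen in the order $\DD,d,\dD,\aA$ given, then $\mu$ small enough for separability, then the regularity parameter $\eps$, then $\eta$ and finally $n_0$.

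The main obstacle I expect is the first step: showing that $(d,\eta)$-density of the graph collection implies that the reduced multilayer graph $R$ inherits both a density condition \emph{and} the property that almost all cluster pairs are regular in almost all colours simultaneously — that is, converting a global averaged density condition into the local regular-pair information that the blow-up lemma needs, while also arranging the exact colour-count balance required for a transversal (as opposed to merely rainbow-ish) embedding. The vertex-degree and colour-size hypotheses are needed precisely to rule out the degenerate obstructions and to make the superregularisation and $X$-absorption steps go through.
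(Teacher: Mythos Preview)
Your plan diverges from the paper's proof in a fundamental way, and the divergence is precisely at the step you flag as the ``main obstacle''. The paper does \emph{not} apply the regularity lemma for graph collections at all. The point is that $(d,\eta)$-density is already a regularity condition: for \emph{any} partition of $V$ into a bounded number $r$ of linear-sized parts and \emph{any} linear-sized colour set, the resulting triple is automatically $(\eta^{1/4},d/2)$-regular, simply by definition. So the paper takes $r=\DD+1$, uses Hajnal--Szemer\'edi to equitably $r$-colour $H$ (giving parts $A_1,\ldots,A_r$), takes a uniformly random partition $V=V_1\cup\ldots\cup V_r$ with $|V_i|=|A_i|$, and observes that with high probability every $(V_i,V_j,\Cols)$ is half-superregular (the degree and colour hypotheses supply the minimum-degree parts). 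After passing to a superregular subcollection via Lemma~\ref{lm:hsrtosr}, one is already in the setting of the transversal blow-up lemma with a fixed $r=\DD+1$. Your regularity-lemma route would instead produce an unbounded number of clusters, forcing the blow-up lemma parameters to depend on that number, and the ``assignment'' and ``exceptional-vertex'' steps you describe are entirely avoided.

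There is one genuine issue your outline does not address and which the paper handles explicitly: the transversal blow-up lemma (Theorem~\ref{th:blowup}) requires each colour class $\Cols_{ij}$ to have size at least $\dD m$, i.e.\ linearly many colours per pair. But $e(H[A_i,A_j])$ could be anywhere from $0$ to $\DD m$, so after randomly splitting $\Cols$ into parts of size $e(H[A_i,A_j])$ some parts may be sublinear. The paper resolves this by a pigeonhole on a tower of scales $\dD_1\ll\ldots\ll\dD_{\binom{r}{2}+1}$: there is a threshold $\dD_\ell$ such that every $e(H[A_i,A_j])/n$ is either below $\dD_\ell$ or above $\dD_{\ell+1}$. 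The sparse pairs (below $\dD_\ell$) involve only $O(\dD_\ell n)$ vertices in total and are embedded first by the greedy embedding lemma with candidate sets (Lemma~\ref{lm:partial}); the dense pairs then form a rainbow super template to which the blow-up lemma applies directly, with target sets coming from the candidate sets of the sparse embedding. This two-scale separation is the one nontrivial idea beyond ``uniform density is already regular'', and it is absent from your plan.
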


This result is a consequence of a more general transversal blow-up lemma,
which we state in the next section.
First, we discuss possible extensions to other colour patterns. 
One can ask whether a graph collection contains a copy of $H$ with a given colour pattern, extending the monochromatic case (a copy in a single graph $G_c$) and rainbow case (a copy with one edge from each $G_c$).
For example, the following was shown in~\cite{MMP}. Not only does the same minimum degree $(\frac{2}{3}+o(1))3n$ which guarantees a triangle factor in a large graph with a vertex set of size $3n$ in fact guarantee a transversal copy when $|\Cols|=3n$,
it also guarantees a triangle factor where the $c$-th triangle lies inside $G_c$ (has colour $c$), when $|\Cols|=n$.

Given $d,\eta>0$, a (single) graph with a vertex set $V$ of size $n$ is \emph{$(d,\eta)$-dense} if for all $A,B \subseteq V$, we have $e(G[A,B]) \geq d|A||B|-\eta n^2$.
A collection of $(d,\eta)$-dense graphs on the same vertex set is a $(d,\eta)$-dense graph collection, but the converse does not hold. 
It is easy to see that, for any $d>0$, as long as $\eta$ and $1/n$ are sufficiently small, such a $G$ contains $\Omega(n^3)$ triangles: the condition applied to $A=B=V$ implies that there are $\Omega(n)$ vertices $v$ with $d_G(v) \geq dn/2$; each one has $\Omega(n^2)$ edges in its neighbourhood.

However, there are uniformly dense graph collections with $d$ fairly large which do not contain any monochromatic triangles, as the following example shows.
(The example is essentially equivalent to one in the setting of uniformly dense hypergraphs -- introduced in Section~\ref{sec:quasi} -- of Reiher, R\"odl and Schacht~\cite{reiher2018turan}, which itself has its roots in work of Erd\H{o}s and Hajnal~\cite{erdos1972ramsey}.)
Form an auxiliary oriented $2$-graph $J$ by letting $V,\Cols$ be disjoint, where, say, $|V|=|\Cols|=n$ is large, and first adding edges between every pair in $V$, and every pair in $(V,\Cols)$.
Independently for each edge, choose an orientation uniformly at random. Add the pair $xy$ to $G_c$ with $x,y \in V$ and $c \in \Cols$
precisely when $xyc$ is a cyclic triangle.
Then, with probability tending to $1$ as $n \to \infty$, $\bm{G}$ is $(\frac{1}{4},o(1))$-dense.
However, for every triple $x,y,z$ of vertices in $V$ and every colour $c \in \Cols$, there are at most two edges in $G_c[\{x,y,z\}]$.

This raises the question of which colour patterns of which graphs $H$ one can expect to find in a (super) uniformly dense graph collection.
If $d$ is sufficiently large, then any bounded degree $H$ with any colour pattern can be found; for those $H$ which are not present when $d$ is an arbitrary positive constant, how large must $d$ be to guarantee such a copy of $H$?
We will explore a generalisation of this question in Section~\ref{sec:quasi}.

\subsection{A transversal blow-up lemma}

In this section, we state a simplified version of our blow-up lemma for bipartite graphs.
We defer the complete statement to Theorem~\ref{th:blowupintro} at the end of this section.

Our blow-up lemma has the advantages that its proof, sketched at the beginning of Section~\ref{sec:blowup}, is conceptually straightforward,
following from the original blow-up lemma and a colour absorption tool introduced in~\cite{MMP};
and the lemma should be powerful enough for all of the transversal embedding applications we have in mind.
This is since the usual regularity-blow-up method is almost always successfully applied to `non-expanding' separable graphs, since one embeds them by using the blow-up lemma on a series of small pieces. 
The disadvantage is that there does not seem to be a reason why the separability condition should be necessary.
%We need this condition in the proof, which we will explain in Section~\ref{sec:blowup}.

\begin{theo}[Simplified transversal blow-up lemma]\label{th:bip}
Let $0 < 1/n \ll \eps,\mu \ll d,\dD,1/\DD \leq 1$.
Let $\Cols$ be a set of at least $\dD n$ colours and let $\bm{G} = (G_c: c \in \Cols)$ be a collection of bipartite graphs with the same vertex partition $V_1,V_2$,
where $n \leq |V_1|\leq|V_2|\leq n/\dD$, such that
\begin{itemize}
\item for all $V_i' \subseteq V_i$ for $i=1,2$ and $\Cols' \subseteq \Cols$
with $|V_i'|\geq \eps|V_i|$ for $i=1,2$ and $|\Cols'| \geq \eps |\Cols|$, we have that
$$
\sum_{c \in \Cols'}e_{G_c}(V_1',V_2') \geq d|\Cols'||V_1'||V_2'|;
$$
\item for $i=1,2$ and every $v \in V_i$ we have $\sum_{c \in \Cols}d_{G_c}(v) \geq d|\Cols|n$
and for every $c \in \Cols$, we have $e(G_c) \geq dn^2$.
\end{itemize}
%Let $H$ be a bipartite graph with parts of size $n$, $s$ edges, maximum degree $\DD$
%and bandwidth at most $\bB n$.
Let $H$ be a $\mu$-separable bipartite graph with parts of size $|V_1|,|V_2|$, and $|\Cols|$ edges and maximum degree $\DD$.
Then $G$ contains a transversal copy of $H$.
\end{theo}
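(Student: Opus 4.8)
The plan is to deduce Theorem~\ref{th:bip} from the original (uncoloured) blow-up lemma of Koml\'os, S\'ark\'ozy and Szemer\'edi applied to the link graph structure of $\bm{G}$, together with a colour-absorption argument in the spirit of~\cite{MMP}. First I would set up the host structure on the hypergraph side: view $\bm{G}$ as the $3$-uniform hypergraph $G^{(3)}$ on $V_1 \cup V_2 \cup \Cols$ and apply the weak regularity lemma for graph collections (the first item in our list of tools) to obtain a constant-size reduced graph/cluster structure. The density condition in the hypothesis says that $\bm{G}$ is itself essentially `regular' at linear scale, so the reduced structure will be (super)regular on almost all cluster-pairs; in particular the union graph $\bigcup_{c}G_c$ restricted to suitable cluster pairs $(W_1,W_2)$ with $W_i \subseteq V_i$ is superregular with density at least $\approx d$. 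The minimum degree condition $\sum_c d_{G_c}(v) \geq d|\Cols|n$ guarantees every vertex survives into a superregular pair, i.e.\ there are no exceptional vertices to handle, which is where the bipartite simplification buys us a lot.

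Next I would split the embedding into a bounded number of pieces using separability. Since $H$ is $\mu$-separable, remove a set $X$ with $|X| \leq \mu n$ so that $H - X$ has components of size $\leq \mu n$; distribute the components among the superregular pairs so that each pair receives a sublinear piece of $H$, and pre-embed $X$ and the (few) edges meeting it greedily, reserving for each vertex of $X$ a candidate set of linear size in the appropriate cluster. The core step is then: inside a single superregular pair $(W_1,W_2)$ equipped with the colour set $\Cols$, we must embed a sublinear bipartite graph $H_j$ of maximum degree $\leq\DD$ so that the edges used receive distinct colours, with prescribed candidate/target sets coming from the pre-embedding. This is exactly where the colour-absorption idea enters: fix an arbitrary embedding of $H_j$ into $(W_1,W_2)$ via the ordinary blow-up lemma, giving each edge of $H_j$ \emph{some} colour; then use the abundance of colours ($|\Cols| = e(H)$, and each colour class $G_c$ has $\geq dn^2$ edges while the density condition forbids any linear set of colours from being sparse on any linear pair) to reroute/repair colour collisions by local switchings, swapping which $G_c$-edge realises an edge of $H$. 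One organises this as an auxiliary bipartite matching or flow problem between edges of $H_j$ and colours, and shows a perfect matching exists via a Hall-type / defect argument using the density hypothesis.

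The main obstacle I expect is making the colour-repair step compatible with the vertex-embedding flexibility required by the blow-up lemma: a switching that fixes a colour clash on one edge of $H$ may force a change of which vertices realise its endpoints, which in turn can cascade to neighbouring edges and can conflict with target/candidate-set restrictions inherited from the pre-embedding of $X$. The resolution is to decouple the two tasks by a two-round approach — first use a small reserved `buffer' of vertices and colours (set aside at the start, of size a small constant fraction) purely for absorption, embed the bulk of $H_j$ with the blow-up lemma while greedily keeping colours distinct on the bulk, and then absorb the remaining $o(n)$ vertices and repair the $o(n)$ residual colour clashes simultaneously using the buffer, where the density and minimum-degree conditions guarantee enough room. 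Verifying that the buffer can simultaneously serve as a vertex-absorber (as in the usual blow-up argument) and a colour-absorber, and that the relevant auxiliary bipartite graph on (residual edges of $H$) $\times$ (unused colours) satisfies Hall's condition with the quantitative slack provided by the $(d,\eta)$-density, is the technical heart of the proof; everything else is a routine adaptation of the standard regularity-blow-up bookkeeping.
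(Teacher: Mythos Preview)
Your high-level strategy---ordinary blow-up lemma plus colour absorption \`a la~\cite{MMP}---is indeed the paper's strategy, but several of your concrete steps would not work as written, and one key idea is missing.

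First, applying the regularity lemma for graph collections here is both unnecessary and counterproductive. The hypotheses of Theorem~\ref{th:bip} already say that $(V_1,V_2,\Cols)$ is an $(\eps,d)$-half-superregular triple (this is exactly what the two bullet points encode). There is a single pair, no reduced graph, and no exceptional set; introducing one via regularity would only create problems. The paper instead passes from half-superregular to superregular (Lemma~\ref{lm:hsrtosr}) and works directly inside this one triple.

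Second, and more importantly, your proposed colour-repair mechanism---embed $H_j$ by the blow-up lemma, then fix colour clashes by ``local switchings''---has the cascade problem you yourself flag, and your two-round buffer sketch does not resolve it. The missing ingredient is the \emph{thick graph}: one embeds a designated piece $H_{\abs}$ of $H$ not into an arbitrary $G_c$ but into the $2$-graph $T^\lL$ whose edges are those $xy$ lying in at least $\lL|\Cols|$ of the $G_c$. Proposition~\ref{lm:thick} shows $T^\lL$ is half-superregular, so the ordinary blow-up lemma applies; and now \emph{every} embedded edge of $H_{\abs}$ has a linear-sized list of admissible colours. This is precisely the degree condition needed to invoke the MMP absorption lemma (Lemma~\ref{lm:mmp}) and obtain disjoint sets $\ms{A},\ms{B}\subseteq\Cols$ with the property that $H_{\abs}$ can be rainbow-coloured using $\ms{A}$ together with \emph{any} prescribed $\ell$-subset of $\ms{B}$. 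No switching or rerouting of vertex images is ever needed: the vertex embedding of $H_{\abs}$ is fixed once and for all, and only its colouring is deferred to the end.

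Third, the paper's decomposition is finer than ``separator plus bulk plus buffer''. One randomly splits the components of $H-X$ into four pieces $H_{\abs},H_{\app},H_{\col},H_{\vx}$ of carefully chosen relative sizes (governed by a hierarchy $p_{\abs}\ll p_{\col}\ll p_{\vx}$), embedded in that vertex order but with $H_{\abs}$ coloured last. Step~2 embeds $H_{\app}$ using most colours outside $\ms{A}\cup\ms{B}$ (Lemma~\ref{lm:approx}, which itself applies the blow-up lemma to thick graphs of subtemplates); Step~3 embeds $H_{\col}$ with the leftover non-absorbed colours \emph{prescribed} (Lemma~\ref{lm:partialcol}); Step~4 embeds $H_{\vx}$ using colours from $\ms{B}$; Step~5 colours $H_{\abs}$ via the absorber. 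Your proposal collapses Steps~2--4 into ``embed the bulk greedily keeping colours distinct'', but the blow-up lemma does not see colours, and a greedy colouring afterwards can leave a set of unused colours on which the remaining edges are not supported---this is exactly why the prescribed-colour lemma and the absorber are needed in tandem.
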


The theorem requires that the number of vertices and edges of $H$ are comparable, so it does not apply to very sparse $H$.
If one adds edges to $H$ to obtain a suitable denser graph $H'$, and duplicates colours until there are $e(H')$ colours, the copy of $H'$ produced by the theorem will not necessarily contain a transversal copy of $H$.

The general version of the theorem, Theorem~\ref{th:blowupintro}, applies to a graph collection whose graphs have common parts $V_1,\ldots,V_r$, and a graph $R$ with $V(R)=[r]$ such that the bipartite condition in the theorem holds between all $ij \in E(R)$, and each $ij$ has a dedicated set of colours $\Cols_{ij}$.

\subsubsection{Rainbow blow-up lemmas}

There are by now many blow-up lemmas for various settings which have been applied to many embedding problems.
Of particular relevance to this paper are \emph{rainbow blow-up lemmas} which apply to a single graph whose edges are coloured.
We say that an edge-coloured graph $G$ is \emph{$k$-bounded} if no colour appears on more than $k$ edges, and $G$ is \emph{locally $k$-bounded} if each vertex is incident to at most $k$ edges of the same colour.
Glock and Joos~\cite{SJ} proved a rainbow blow-up lemma for $o(n)$-bounded edge-colourings, which allows one to find a rainbow embedding of a given bounded degree graph $H$. Here, the number of colours is many times larger than $e(H)$. Later, Ehard, Glock, and Joos~\cite{EGJ} proved a similar lemma for locally $O(1)$-bounded colourings, which allows the number of colours to be $(1+o(1))e(H)$. 
Therefore it does not seem possible to use such blow-up lemmas for transversal embedding problems where we require exactly $e(H)$ colours.

%Since these blow-up lemmas are limited to single graphs with restricted colourings, when dealing with a collection of graphs, it is difficult to reduce the transversal problem to a single graph that meets the conditions in~\cite{SJ} or~\cite{EGJ}, since their conditions typically require the colour set to have size at least $(1+o(1))e(H)$, which is larger than the number of colours in our transversal problems (which is usually exactly $e(H)$).

\subsection{Applications to embedding in uniformly dense hypergraphs}\label{sec:quasi}

In this section we revisit the hypergraph perspective and discuss the consequences of our work to embeddings in $3$-uniform hypergraphs.
`Weak regularity' is the straightforward generalisation of Szemer\'edi regularity from ($2$-uniform hyper)graphs to $k$-uniform hypergraphs (hereafter referred to as \emph{$k$-graphs}), and the `weak regularity lemma' was proved by Chung~\cite{fanweak} in much the same way as the original~\cite{Szemeredi}. However, this lemma is not powerful enough to prove many of the analogues of graph results which one would like. In particular, there is no general \emph{counting lemma} which guarantees that the number of copies of a given small graph $F$ in a regularity partition is similar to what one would expect if pairs were replaced by uniform random hyperedges, with the same density. Thus the `strong regularity lemma' was developed, which uses, as the name suggests, a more complicated and much stronger notion of regularity, and does have an associated counting lemma; this lemma is much more applicable than its weaker counterpart.

It was shown by Conlon, H\`an, Person and Schacht~\cite{conlon2012weak} and independently Kohayakawa, Nagle, R\"odl and Schacht~\cite{kohayakawa2010weak} that weak regularity is in fact strong enough to give a counting lemma for \emph{linear} hypergraphs, which have the property that $|e \cap f| \leq 1$ for all distinct hyperedges $e,f$.
As a transversal copy of a (simple) graph inside a graph collection $\bm{G}$ is a linear subhypergraph of the associated $3$-graph $G^{(3)}$, weak regularity is an effective tool for transversal embedding problems.
And conversely, the tools developed in this paper are useful for embedding $3$-graphs.

%A yet weaker condition than `weakly quasirandom' removes the upper bound on edges spanned by subsets.
We generalise the definition we gave in Section~\ref{sec:quasicol} for graphs.

\begin{defn}[Uniformly dense $k$-graph] Let $d,\eta>0$ and let $k \geq 2$ be an integer.
A $k$-graph is \emph{$(d,\eta)$-dense} if for all subsets $U_1,\ldots,U_k \subseteq V$, we have
$$
e(U_1,\ldots,U_k) \geq d|U_1|\ldots |U_k| - \eta n^k,
$$
where $e(U_1,\ldots,U_k)$ is the number of $k$-tuples 
$(x_1,\ldots,x_k) \in U_1\times \ldots \times U_k$
for which $\{x_1,\ldots,x_k\}$ is a hyperedge.
We informally refer to a $k$-graph $G$ on $n$ vertices that is $(d,\eta)$-dense for parameters $0<1/n \ll \eta \ll d \leq 1$ as \emph{uniformly dense}.
If there is $\aA \gg \eta$ for which additionally $d_G(v) \geq \aA n^{k-1}$ for all $v \in V(G)$, then we say that $G$ is \emph{super uniformly dense}.
\end{defn}

This is a type of quasirandomness, since a random $k$-graph of density at least $d$ is $(d,o(1))$-dense with high probability. Several papers studying uniform density use the term `quasirandom' instead.

Following a suggestion of Erd\H{o}s and S\'os~\cite{erdos1982ramsey}, a systematic treatment of extremal problems in uniformly dense hypergraphs has been started by R\"odl, Reiher and Schacht, in part due to the great difficulty of such problems in general hypergraphs. 
In~\cite{reiher2018hypergraphs}, they fully answered the zero Tur\'an density question for uniformly dense $3$-graphs, i.e.~for which $F$ is the following true? For all $d>0$, there exists $\eta>0$ such that every sufficiently large $(d,\eta)$-dense $3$-graph contains $F$ as a subhypergraph.
They showed that these $F$ are precisely those with the following property. There is an ordering $v_1,\ldots,v_r$ of $V(F)$ and a colouring of the set of pairs of vertices contained in edges by red, blue and green, so that whenever $v_iv_jv_k \in E(F)$ with $i<j<k$, we have that $v_iv_j$ is red, $v_iv_k$ is blue and $v_jv_k$ is green.
%(Note that a minimum degree condition is redundant here.)
This set of $F$ includes linear $F$ and $3$-partite $F$ (Erd\H{o}s~\cite{erdos1964extremal} proved that a $k$-graph $F$ has Tur\'an density $0$ if and only if $F$ is $k$-partite), but is much richer than this: for example, the hypergraph obtained by removing one edge from the tight cycle on $5$ vertices is such a hypergraph.

The simple argument given in Section~\ref{sec:quasicol} shows that for $k=2$ and $\DD>0$, a sufficiently large uniformly dense $2$-graph contains a copy (in fact many copies) of $K_\DD$.
In contrast, for $k \geq 3$ there are very simple $k$-graphs $F$ which require a fairly large density $d$ to appear in any uniformly dense $k$-graph.
Indeed, there is a $(\frac{1}{4},o(1))$-dense $3$-graph in which $K^{(3)-}_4$, the (unique) $3$-graph with $4$ vertices and $3$ edges, does not appear. (The example is closely related to the one given in Section~\ref{sec:quasicol}.) 

In this paper, we are interested in which \emph{spanning} hypergraphs appear in any super uniformly dense hypergraph.
Note that we cannot remove `super' since uniform density does not preclude the existence of isolated vertices.

\begin{prob}
Let $\mc{H} = (H_\ell: \ell \in \mb{N})$ be a collection of $k$-graphs where $|V(H_\ell)|=: n_\ell \to \infty$.
For which $\mc{H}$ is the following true?
For all $d,\aA>0$, there exist $\eta>0$ and $\ell_0 \in \mb{N}$ such that for all integers $\ell>\ell_0$, every $(d,\eta)$-dense $k$-graph $G$ on $n_\ell$ vertices with $d_G(v) \geq \aA n_\ell^{k-1}$ for all $v \in V(G)$ contains $H_\ell$ as a subhypergraph.
\end{prob}

For $k=2$, the blow-up lemma of Koml\'os, S\'ark\"ozy and Szemer\'edi~\cite{komlos1997blow} implies that every uniformly dense graph contains as a subgraph any given spanning graph $H$ with bounded degree and sublinear bandwidth (as observed by Glock and Joos, see Theorem~9.3 in~\cite{ST}).

%For $k=3$, the work in~\cite{ding2022f} characterises those $F$ for which $H_n=n \cdot F$, that is, $n$ vertex-disjoint copies of $F$, are embeddable.
%For general $k$, the authors obtain such a characterisation for $k$-partite $F$.
For general uniformities $k$, Lenz and Mubayi~\cite{lenz2016perfect} proved that uniformly dense $k$-graphs contain an $F$-factor for any given fixed size linear $F$. In fact they found non-linear $F$ for which uniform density is sufficient, and `almost-linear' $F$ for which it is not.
Ding, Han, Sun, Wang and Zhou~\cite{ding2022f} completed this work for $k=3$ by characterising those $F$ for which uniform density guarantees an $F$-factor,
and for general $k$ they also obtained a characterisation for $k$-partite $F$.
Lenz, Mycroft and Mubayi~\cite{lenz2016hamilton} showed that uniform density guarantees a loose cycle.
To the best of our knowledge, there are no further results on embedding spanning $F$ in uniformly dense hypergraphs of arbitrarily small positive density.

Our next main result generalises the result of~\cite{lenz2016hamilton} for $k=3$ by providing a new family of spanning $H$ which can be found in super uniformly dense $3$-graphs.
An \emph{expanded graph} is obtained from a $2$-graph by replacing every edge $e=xy$ by some $3$-edges $xyc_1,\ldots,xyc_{t_e}$, where all the new vertices $c_1,\ldots,c_{t_e}$ are distinct, and the number $t_e$ of new edges/vertices for the edge $e$ depends on $e$. If every $t_e$ is equal to $t$ then we call this a \emph{$t$-expansion}. For example, a loose $3$-uniform cycle is an expanded cycle where each edge is replaced by one expanded edge, that is, a $1$-expansion of a cycle.
%We think of the new vertex as a colour, so that the expanded graph is obtained from the original graph by assigning each edge some colours, where the sets of colours on edges are disjoint. If each edge receives exactly one colour, we call this a \emph{rainbow copy}.
The $1$-expansion of a (simple) graph $H$ is linear and has $|V(H)|+|E(H)|$ vertices.

\begin{theo}\label{th:quasi3}
For all $\DD,d,\aA>0$ there are $\eta,\mu>0$ and $n_0 \in \mb{N}$ such that the following holds for all integers $n \geq n_0$.
%Let $0 < 1/n \ll \eta,\mu \ll \aA,d,1/\DD \leq 1$.
Let $G$ be a $(d,\eta)$-dense
$3$-graph on $n$ vertices with $d_{G}(v) \geq \aA n^2$ for all $v \in V(G)$.
Then $G$ contains a copy of the $1$-expansion of any given $\mu$-separable graph $H$ with $\DD(H) \leq \DD$ and $|V(H)|+|E(H)|\leq n$.
\end{theo}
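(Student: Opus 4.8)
The plan is to use the correspondence between $1$-expansions and transversals. A copy of the $1$-expansion $H^+$ of a graph $H$ in $G$ is exactly a transversal copy of $H$ in the graph collection $\bm{G}=(G_c: c\in\mathcal{C})$ on a vertex set $V\subseteq V(G)$, where $\mathcal{C}\subseteq V(G)\setminus V$ and $xy\in E(G_c)$ iff $\{x,y,c\}\in E(G)$: one embeds the vertices of $H$ into $V$ and the expansion vertices into $\mathcal{C}$. So I would fix a partition $V(G)=V\sqcup\mathcal{C}\sqcup W$ (with $W$ unused) of suitable part sizes and apply Theorem~\ref{th:quasi} to $\bm{G}$ and to $H$ or a padded version of it. For this I need $\bm{G}$ to be super uniformly dense on $V$. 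The uniform density condition transfers from $G$ with no loss in $d$, since $\sum_{c\in\mathcal{C}'}e(G_c[A,B])=e_G(A,B,\mathcal{C}')\ge d|A||B||\mathcal{C}'|-\eta n^3$; but the error $\eta n^3$ is negligible against $|V|^3$ only when $|V|=\Omega(n)$, so $V$ must be chosen linear in $n$. The degree conditions $\sum_{c}d_{G_c}(v)\ge\alpha'|\mathcal{C}||V|$ and $e(G_c)\ge\alpha'|V|^2$ fail for general partitions, but for a uniformly random partition with the prescribed part sizes they hold comfortably (in expectation they are $\approx 2\alpha|\mathcal{C}||V|$ and $\approx\alpha|V|^2$), by a bounded-differences inequality and a union bound over the $O(n)$ vertices and colours; a negligible relocation of $O(\sqrt{n\log n})$ vertices then makes the part sizes exact without spoiling this.

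First I would strip off isolated vertices: if $I\subseteq V(H)$ is the set of isolated vertices of $H$, then $H^+$ is the disjoint union of $(H-I)^+$ and $|I|$ isolated vertices, so it suffices to embed $(H-I)^+$ and place the remaining $|I|$ vertices on unused vertices of $G$ (there is room since $|V(H)|+|E(H)|\le n$). Write $H_0:=H-I$, $m:=|V(H)|$, $m_0:=|V(H_0)|$ and $t:=|E(H_0)|=|E(H)|$; now $\delta(H_0)\ge 1$, so $m_0/2\le t\le\Delta m_0/2$, and $m_0+t\le n$ forces $m_0\le 2n/3$. If $m_0$ is bounded by a suitable constant $C_0=C_0(\Delta)$, then $H_0^+$ is a fixed linear $3$-graph and the theorem of Reiher, R\"odl and Schacht~\cite{reiher2018hypergraphs}, whose class of hypergraphs contains all linear hypergraphs, gives $H_0^+\subseteq G$; so assume $m_0>C_0$.

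The main argument then splits into two cases, with a small constant $c=c(\Delta)>0$. If $m_0\ge cn$, then $m_0=\Theta(n)$; take $|V|=m_0$ and $|\mathcal{C}|=t$, so $|\mathcal{C}|=t\ge m_0/2=|V|/2$, obtain a good random partition, and note that $H_0$ has exactly $|V|$ vertices and $|\mathcal{C}|$ edges, maximum degree at most $\Delta$, and (viewing it as an $m_0$-vertex graph) separability parameter $\mu m/m_0=O(\mu)$; then Theorem~\ref{th:quasi} with $\delta=1/2$ produces a transversal copy of $H_0$, i.e.\ a copy of $H_0^+$ in $G$. If instead $C_0<m_0<cn$, then $t<\Delta cn/2<n/4$; take $|V|=\lceil n/2\rceil$ and $|\mathcal{C}|=\lfloor n/2\rfloor$ and pad $H_0$ to $\hat H_0:=H_0\sqcup H_1$, where $H_1$ is a vertex-disjoint union of bounded-size, bounded-degree components (plus a few isolated vertices to absorb residues) on $|V|-m_0$ vertices and $|\mathcal{C}|-t$ edges; the edge-to-vertex ratio required of $H_1$ lies in a range bounded away from $0$ and, for $c$ small, from above, so such an $H_1$ exists and has maximum degree $O_\Delta(1)$. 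Then $\hat H_0$ has exactly $|V|$ vertices and $|\mathcal{C}|$ edges, maximum degree $O_\Delta(1)$, and is $O(\mu)$-separable; a good random partition makes $\bm{G}$ super uniformly dense on $V$ (and $|V|=\Theta(n)$, so density transfers); and Theorem~\ref{th:quasi} with $\delta=1/2$ gives a transversal copy of $\hat H_0$, which contains a transversal copy of $H_0$ and hence a copy of $H_0^+$ in $G$.

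The main obstacle is precisely the regime where $H^+$ is much smaller than $n$: one cannot then simply pass to a sub-hypergraph of $G$ on $\Theta(|V(H^+)|)$ vertices, because uniform density does not survive restriction to a sublinear set (there the error term $\eta n^3$ dominates). The device that resolves this is the padding in the second case: adding enough bounded-degree components to raise both the number of real vertices and the number of colours to $\Theta(n)$, so that the induced collection lives on a linear vertex set and Theorem~\ref{th:quasi} applies with fixed parameters. The remaining work is routine but needs care: tracking how the separability parameter behaves when isolated vertices are deleted and when $H_1$ is attached; verifying that $H_1$ with the prescribed numbers of vertices, edges and bounded degree exists; and the concentration and size-correction step for the random partition. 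One must also fix the hierarchy of constants in the right order, choosing the parameters output by Theorem~\ref{th:quasi} first and then $\mu$, $C_0$, $c$ and $\eta$ small enough — in particular so small that all the $O(\mu)$-separability bounds above fall below the separability threshold of Theorem~\ref{th:quasi}.
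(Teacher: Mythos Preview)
Your approach is correct and follows the same overall strategy as the paper: identify a linear-sized vertex set $V$ and colour set $\Cols$ inside $V(G)$ by a random partition, verify that the induced graph collection is super uniformly dense, and apply Theorem~\ref{th:quasi}; when $H$ is too small for this to work directly, pad it so that $v(H')$ and $e(H')$ are both $\Theta(n)$.

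The paper's implementation is somewhat simpler than yours. Rather than stripping isolated vertices, invoking~\cite{reiher2018hypergraphs} for constant-size $H_0$, and splitting into two size regimes, the paper handles all cases uniformly: if $e(H)\le n/4-1$ it adds a matching on isolated vertices of $H$ until $e(H')>n/4-1$, then restricts to non-isolated vertices. This single padding step guarantees $v(H'),e(H')=\Theta(n)$ and keeps $\Delta(H')\le\Delta$ and $H'$ $\mu$-separable, so only one invocation of Theorem~\ref{th:quasi} is needed and no external result on small linear hypergraphs is required. (For the paper's padding to work one should first, without loss of generality, add isolated vertices to $H$ until $|V(H)|+|E(H)|=n$, so that enough isolated vertices are available; this is harmless since the $1$-expansion of the augmented graph contains that of the original.) Your construction of $H_1$ as a disjoint union of bounded components is correct but more elaborate than necessary; the advantage of your route is that it makes explicit why the main obstacle is the sublinear regime, and your argument that the vertices used to embed $H_1$ can be recycled for the isolated vertices of $H$ is a point worth stating carefully.
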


Since a large $2$-regular graph (a union of vertex-disjoint cycles) is $o(1)$-separable, the theorem implies that any super uniformly dense $3$-graph $G$ contains as a subhypergraph any $3$-graph consisting of vertex-disjoint loose cycles.
%This includes the case $k=3$ of~\cite{lenz2016hamilton}.
%It would be interesting to extend the work in~\cite{DHSWZ} and characterise those spanning

Next, we reformulate Theorem~\ref{th:bip} as a blow-up lemma for $3$-graphs, which may be of independent interest.

\begin{theo}[Simplified weak hypergraph blow-up lemma]\label{lm:stran}
For all $\DD,d,\dD>0$ there are $\eps,\mu>0$ and $n_0 \in \mb{N}$ such that the following holds for all integers $n \geq n_0$.
%Let $0 < 1/n \ll \eps, \mu \ll d,\dD,1/\DD \leq 1$.
Let $G$ be a $3$-partite $3$-graph with parts
$V_1,V_2,V_3$ where every $|V_i|=:n_i$
and $n \leq n_1 \leq n_2 \leq n/\dD$ and $n_3 \geq \dD n$ such that $G$ is a \emph{weakly $(\eps,d)$-half-superregular triple}, that is,
\begin{itemize}
\item[(i)] for all $i \in [3]$ and $V_i' \subseteq V_i$ with $|V_i'| \geq \eps|V_i|$, we have
$
e_G(V_1',V_2',V_3') \geq d|V_1'||V_2'||V_3'|
$,
\item[(ii)] $d_G(v) \geq dn^2$ for all $v \in V(G)$.
\end{itemize}
Let $H$ be a $\mu$-separable bipartite $2$-graph with parts of size $n_1,n_2$, with $n_3$ edges and maximum degree $\DD$.
Then $G$ contains a copy of the $1$-expansion of $H$, where the images of the new vertices lie in $V_3$.
\end{theo}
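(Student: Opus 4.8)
The plan is to deduce Theorem~\ref{lm:stran} from Theorem~\ref{th:bip} via the graph-collection/$3$-graph dictionary described in the introduction; no genuinely new argument is needed, only a careful translation together with a short parameter chase. Given the $3$-partite $3$-graph $G$ with parts $V_1,V_2,V_3$, I would treat $\Cols:=V_3$ as a colour set and define the bipartite graph collection $\bm{G}=(G_c:c\in\Cols)$ on the common vertex partition $(V_1,V_2)$ by putting $xy\in G_c$ (with $x\in V_1$, $y\in V_2$) exactly when $xyc\in E(G)$. This is a bijection between $E(G)$ and the coloured edges of $\bm{G}$, under which, for $x\in V_1$, $d_G(x)=\sum_{c\in\Cols}d_{G_c}(x)$ and likewise for $y\in V_2$; for $c\in V_3$, $d_G(c)=e(G_c)$; and for $V_i'\sub V_i$, $i\in[3]$, $e_G(V_1',V_2',V_3')=\sum_{c\in V_3'}e_{G_c}(V_1',V_2')$.

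Next I would check the hypotheses of Theorem~\ref{th:bip} for $\bm{G}$, with a suitable density parameter. Since $H$ has maximum degree $\DD$ and smaller part of size $n_1\le n/\dD$, we have $|\Cols|=n_3=e(H)\le\DD n_1\le(\DD/\dD)n$; with $n_3\ge\dD n$ and $n\le n_1\le n_2\le n/\dD$ this matches the size requirements of Theorem~\ref{th:bip}. Condition~(i) of the weakly $(\eps,d)$-half-superregular triple is, verbatim under the dictionary with $\Cols'=V_3'$, the first bullet of Theorem~\ref{th:bip}. Condition~(ii), namely $d_G(v)\ge dn^2$ for all $v$, gives $e(G_c)\ge dn^2$ for $c\in V_3$ and $\sum_{c\in\Cols}d_{G_c}(v)\ge dn^2\ge(d\dD/\DD)|\Cols|n$ for $v\in V_1\cup V_2$ (using $|\Cols|\le(\DD/\dD)n$). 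Hence, setting $d':=d\cdot\min\{1,\dD/\DD\}$ and keeping $\DD,\dD$ unchanged, every hypothesis of Theorem~\ref{th:bip} holds, and $H$ is a $\mu$-separable bipartite graph with parts of size $|V_1|,|V_2|$, exactly $|\Cols|$ edges and maximum degree $\DD$. I would therefore output $\eps,\mu$ equal to the parameters returned by Theorem~\ref{th:bip} on input $(\DD,d',\dD)$, take $n_0$ large enough for both theorems, and apply Theorem~\ref{th:bip} to obtain a transversal copy of $H$ in $\bm{G}$.

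Finally I would translate this copy back. A transversal copy of $H$ in $\bm{G}$ is an embedding $\phi\colon V(H)\to V_1\cup V_2$ with an assignment $c(\cdot)$ of colours to edges of $H$ such that $\phi(u)\phi(v)\in G_{c(uv)}$ for every $uv\in E(H)$ and no colour repeats; since $e(H)=|\Cols|$, the map $c$ is a bijection $E(H)\to V_3$. Extending $\phi$ by sending the new vertex $w_e$ of each edge $e=uv$ of $H$ to $c(e)\in V_3$, I obtain a map $\psi$ on the $1$-expansion of $H$ with $\{\psi(u),\psi(v),\psi(w_e)\}=\{\phi(u),\phi(v),c(uv)\}\in E(G)$ for every $e=uv$, and $\psi$ is injective since $\phi$ and $c$ are injective and their images lie in the disjoint sets $V_1\cup V_2$ and $V_3$. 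Thus $\psi$ is the desired copy of the $1$-expansion of $H$ in $G$, with the new vertices in $V_3$. There is no real obstacle here, as the statement is essentially a reformulation of Theorem~\ref{th:bip}; the only points requiring care are the asymmetric degree translation --- converting the single uniform bound $d_G(v)\ge dn^2$ into the two conditions of Theorem~\ref{th:bip}, which uses the a priori bound $|\Cols|=e(H)=O(n)$ coming from bounded degree --- and the observation that a transversal copy using $e(H)=|\Cols|$ edges must use each colour exactly once, so that the colour assignment is a bijection and $\psi$ is genuinely injective.
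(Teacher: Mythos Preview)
Your proposal is correct and follows exactly the approach the paper takes: the paper explicitly presents Theorem~\ref{lm:stran} as a reformulation of Theorem~\ref{th:bip} via the identification $\Cols:=V_3$ and the $3$-graph/graph-collection dictionary, and your parameter chase (in particular using $|\Cols|=e(H)\le \DD n/\dD$ to convert the uniform degree bound into the two conditions of Theorem~\ref{th:bip}, and noting that $e(H)=|\Cols|$ forces the colour map to be a bijection) fills in precisely the details the paper leaves implicit.
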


Weak superregularity and uniform density of $3$-graphs and graph collections are closely connected.
Observe that if $G$ is $(d+\sqrt{\eta},\eta)$-dense, where $\eta \gg \eps,\dD$, and $V_1,V_2,V_3$ is any partition with sizes as in the theorem, then the $3$-partite subhypergraph $G'$ of $G$ induced on these parts satisfies~(i). If $d_G(v) \geq \aA n^2$ for all $v \in V$, and in addition the vertex partition is uniformly random, then with 
probability tending to $1$ as $n \to \infty$, $G'$ is weakly $(\eps,\min\{d,\aA/2\})$-half-superregular, say. 
Furthermore, if one takes any partition $V(G)=V \cup \Cols$ into parts which are not too small, then the graph collection $\bm{G}$ consisting of graphs $G_c$ for $c \in \Cols$ with $V(G_c)=V$ and $E(G_c)=\{xy: x,y \in V \text{ and }xyc \in E(G)\}$ is $(d,\eta)$-dense. Again, if $d_G(v) \geq \aA n^2$, a random such partition gives rise to a super uniformly dense graph collection.

 As an example, any $3$-graph $G$ satisfying the hypotheses of the theorem with $n_1=n_2$ and $n_3=n_1+n_2$ contains a copy of a loose Hamilton cycle.
The full statement of a $3$-graph version of our transversal blow-up lemma (Theorem~\ref{th:blowupintro}) is stated in Theorem~\ref{th:blowup3}.

A much more general hypergraph blow-up lemma was established by Keevash \cite{Keevash} which applies to $k$-graphs satisfying a strong regularity property, which is designed to be used with the strong regularity lemma; this blow-up lemma applies to embed any bounded degree $k$-graph $H$.
Nevertheless, it would be interesting to determine which graphs can be embedded into a weakly superregular triple. We discuss this further in Section~\ref{sec:conclude}.

There are many other notions of quasirandomness of hypergraphs, often related to the number of edges in/between subsets, that have been studied in the literature. We refer the reader to~\cite{aigner2017quasirandomness,lenz2015poset,towsner2017sigma} for a detailed comparison of such notions; it turns out that equivalent notions of quasirandomness in graphs have inequivalent hypergraph analogues. The embedding of spanning structures has been studied in these various settings, for example tight Hamilton cycles in $3$-graphs with a stronger quasirandomness condition than the one in this paper, in~\cite{aigner2021tight,araujo2020localised,gan2022hamiltonicity}, 
and graphs of sublinear bandwidth in `locally dense' $2$-graphs, which is weaker than uniformly dense, in~\cite{ST}, and in `inseparable' $2$-graphs in~\cite{ebsen2020embedding}.
While in this paper we ask which $H$ are subhypergraphs of any quasirandom hypergraph $G$ of positive density (where for us, `quasirandom' means `uniformly dense'),
the results above mainly concern the generalisation of this question: given $H$, what minimum degree in a quasirandom hypergraph $G$ is sufficient to imply the existence of $H$? 
There are various notions of degree in hypergraphs and the results mentioned in this paragraph consider several of them.
%Finally, we remark that `locally dense' graphs, in which every linear vertex subset has positive density, is alone not enough to guarantee
Quasirandomness conditions that apply to sparse (hyper)graphs have also been well studied. Recently, H\`an, Han and Morris~\cite{han2022factors} extended the results of~\cite{lenz2016perfect,lenz2016hamilton} to the sparse regime.

\subsection{The statement of our main result}

The full statement of our transversal blow-up lemma is as follows.
Its proof will follow from Theorem~\ref{th:blowup} which is a very similar statement written in the notation defined in Section~\ref{sec:reg}.

\begin{theo}[Transversal blow-up lemma]\label{th:blowupintro}
For all $\nu,d,\dD,\DD,r>0$ where $r \geq 2$ is an integer, there exist $\eps,\mu,\aA>0$ and $m_0 \in \mb{N}$ such that the following holds for all integers $m \geq m_0$.

\noindent
Suppose that $\bm{G}=(G_c:c \in \Cols)$ is a graph collection with the following properties.
\begin{itemize}
    \item There is a graph $R$ with vertex set $[r]$ and a partition $\Cols = \bigcup_{e \in E(R)}\Cols_e$ where $|\Cols_e| \geq \dD m$ for all $e \in E(R)$;
\item for all $ij \in E(R)$ and $c \in \Cols_{ij}$, $G_c$ is bipartite with parts $V_i,V_j$,
and $V$ is a vertex set of size $n$ with partition $V=V_1 \cup \ldots \cup V_r$, where $m \leq |V_i| \leq m/\dD$ for each $i \in [r]$;
\item for all $ij \in E(R)$,
\begin{itemize}
\item for all $V_h' \subseteq V_h$ for $h=i,j$ and $\Cols'_{ij} \subseteq \Cols_{ij}$
with $|V_h'|\geq \eps|V_h|$ for $h=i,j$ and $|\Cols'_{ij}| \geq \eps |\Cols_{ij}|$, we have that
$$
\sum_{c \in \Cols'_{ij}}e_{G_c}(V_i',V_j') \geq d|\Cols'_{ij}||V_i'||V_j'|;
$$
\item for $h=i,j$ and every $v \in V_h$ we have $\sum_{c \in \Cols_{ij}}d_{G_c}(v) \geq d|\Cols_{ij}|m$
and for every $c \in \Cols_{ij}$, we have $e(G_c) \geq dm^2$.
\end{itemize}
\end{itemize}
Suppose that $H$ is a graph with the following properties.
\begin{itemize}
\item $\DD(H) \leq \DD$; 
    \item $H$ is $\mu$-separable;
\item $H$ has vertex partition $A_1 \cup \ldots \cup A_r$ such that $|A_i|=|V_i|$ for all $i \in [r]$ and for every $xy \in E(H)$ there is $ij \in E(R)$ such that $x \in A_i$ and $y \in A_j$;
\item $e(H[A_i,A_j])=|\Cols_{ij}|$ for all $ij \in E(R)$;
\item for each $i \in [r]$, there is a set $U_i \subseteq A_i$ with $|U_i| \leq \aA m$ and for each $x \in U_i$, a set $T_x \subseteq V_i$ with $|T_x| \geq \nu m$.
\end{itemize}
Then there is a transversal embedding of $H$ inside $\bm{G}$ such that for every $i \in [r]$, every $x \in U_i$ is embedded inside $T_x$.
\end{theo}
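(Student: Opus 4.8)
### Proof proposal for Theorem~\ref{th:blowupintro}

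\textbf{Overall strategy.}

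The plan is to reduce the transversal embedding problem to a single application of the original (uncoloured) blow-up lemma of Koml\'os, S\'ark\"ozy and Szemer\'edi, combined with the colour-absorption technique from~\cite{MMP}. The heart of the matter is this: each $G_c$ with $c \in \Cols_{ij}$ lives between the same pair $V_i, V_j$, so if we take the union $F_{ij} := \bigcup_{c\in\Cols_{ij}}G_c$ as an ordinary (multi)graph on $V_i \cup V_j$, the uniform-density hypothesis says exactly that every not-too-small bipartite subpair of $F_{ij}$ has density at least $d$ — i.e., $(F_{ij})_{ij\in E(R)}$, sitting on the cluster graph $R$, is a weakly $(\eps, d)$-superregular "cluster structure". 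So we can \emph{first} use the ordinary blow-up lemma to embed $H$ into this uncoloured structure, respecting the target sets $T_x$, and obtain a (non-transversal) copy of $H$. Then we \emph{separately} choose, for each embedded edge $x y$ with $x\in A_i, y\in A_j$, a colour $c(xy)\in\Cols_{ij}$ so that across all edges in $E(H[A_i,A_j])$ the map $xy\mapsto c(xy)$ is a bijection onto $\Cols_{ij}$, and so that $\phi(x)\phi(y)\in G_{c(xy)}$. The colour assignment is a perfect matching problem in an auxiliary bipartite graph (edges of $H$ in part $ij$ versus colours in $\Cols_{ij}$), and this is where separability and $\mu$-boundedness of the pieces of $H$ enter, exactly as in~\cite{MMP}.

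\textbf{Key steps, in order.}

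\emph{Step 1 (Setup and the uncoloured structure).} Fix the constant hierarchy $\eps,\mu,\aA \ll d,\dD,1/\DD,1/r$ and, within it, auxiliary constants for the blow-up lemma and for the absorption step; choose $m_0$ large. For each $ij\in E(R)$ let $F_{ij}=\bigcup_{c\in\Cols_{ij}}G_c$ as a graph on $V_i\cup V_j$ (ignoring multiplicities of a pair that appears in several $G_c$; since $H$ is simple we will never need a multi-edge). The hypothesis $\sum_{c\in\Cols_{ij}}e_{G_c}(V_i',V_j')\geq d|\Cols_{ij}||V_i'||V_j'|$ for all $\eps$-large $V_i',V_j'$ gives $e_{F_{ij}}(V_i',V_j')\geq (d/|\Cols_{ij}|)\cdot|\Cols_{ij}||V_i'||V_j'| = d|V_i'||V_j'|$ — wait, more carefully: the sum counts each colour-edge once, so $e_{F_{ij}}(V_i',V_j')\ge \frac1{|\Cols_{ij}|}\sum_c e_{G_c}(V_i',V_j')$ only if every pair is covered by all colours, which is false. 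Instead one simply uses that $e_{F_{ij}}(V_i',V_j')\ge \frac1{\DD'}\sum_c e_{G_c}(V_i',V_j')$ where $\DD'$ bounds the number of colours on a single pair; but we do not need a bound on $\DD'$: we only need $F_{ij}[V_i',V_j']$ to be \emph{nonempty and dense}, and in fact the cleanest route is to observe that $\sum_c e_{G_c}(V_i',V_j')\ge d|\Cols_{ij}||V_i'||V_j'|\ge d\dD m |V_i'||V_j'|$ forces many pairs to lie in $\bigcup_c G_c$ — precisely, at most $|\Cols_{ij}|$ colours per pair gives $e_{F_{ij}}(V_i',V_j')\ge d|V_i'||V_j'|$. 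So $F_{ij}$ restricted to $\eps$-large subpairs has density $\ge d$. Together with the vertex-degree hypothesis (each $v\in V_i$ has $\sum_c d_{G_c}(v)\ge d|\Cols_{ij}|m$, hence $d_{F_{ij}}(v)\ge dm$ after the same division), the collection $(F_{ij})$ is a lower-bounded-density cluster structure on $R$. Pass to a spanning subgraph making it genuinely $(\eps,d/2)$-superregular via the standard slicing/cleaning lemmas referenced in the paper's Section~\ref{sec:reg} (moving a few vertices to exceptional sets and rebalancing clusters is absorbed into the $U_i$/$T_x$ machinery, or — since here there are no "exceptional vertices" and the clusters have exactly the right sizes $|V_i|=|A_i|$ — one just takes a superregular subpair of each $F_{ij}$ of density $\ge d/2$).

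\emph{Step 2 (Uncoloured embedding with target sets).} Apply the blow-up lemma of~\cite{komlos1997blow} (its version with target sets, which is standard) to the superregular structure from Step~1 and the graph $H$ with its partition $A_1\cup\dots\cup A_r$, with the constraint that each $x\in U_i$ is embedded into $T_x\cap(\text{cleaned }V_i)$; this is legitimate since $|U_i|\le\aA m$ with $\aA$ tiny and $|T_x|\ge\nu m$. Output: an embedding $\phi\colon V(H)\to V$ with $\phi(x)\phi(y)\in F_{ij}$ for every $xy\in E(H[A_i,A_j])$, and $\phi(x)\in T_x$ for $x\in U_i$.

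\emph{Step 3 (Colouring the embedded edges).} For each $ij\in E(R)$ build the bipartite "colour-choice" graph $\Lambda_{ij}$ with parts $E(H[A_i,A_j])$ and $\Cols_{ij}$, joining $xy$ to $c$ iff $\phi(x)\phi(y)\in G_c$. We want a perfect matching of $\Lambda_{ij}$ (it is balanced since $e(H[A_i,A_j])=|\Cols_{ij}|$); its edges then define the colour bijection, yielding a transversal copy. To get the matching we use Hall's condition, which the $\mu$-boundedness of $H$'s pieces plus the density of $\bm{G}$ deliver: this is precisely the colour-absorption argument of~\cite{MMP}. Concretely, one can first greedily colour the vast majority of edges (using that each pair $\phi(x)\phi(y)$ has $\ge \delta|\Cols_{ij}|$ valid colours for some $\delta=\delta(d)>0$, by an averaging/counting argument from the density hypothesis applied to the single pair, or by double counting over colours), reserving a sublinear set of "absorber" edges — here is where separability of $H$ is used, to guarantee the absorber edges can be chosen spread out among small components — and then use the absorber edges together with a Hall/augmenting-path argument on the leftover sublinear colour set to complete the bijection. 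Do this independently for each of the $O(r^2)$ edges of $R$.

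\textbf{Main obstacle.} The delicate point is Step~3: controlling the colour-choice graph $\Lambda_{ij}$ well enough to extract a \emph{perfect} matching, i.e., hitting \emph{every} colour of $\Cols_{ij}$ exactly once. A priori, after the uncoloured embedding of Step~2, some colour $c\in\Cols_{ij}$ might be "rare", appearing on $\phi(x)\phi(y)$ for very few edges $xy$, or even none — the blow-up lemma of Step~2 is colour-blind and could conceivably route $H$'s edges onto pairs avoiding a fixed sparse $G_c$. The fix is to not treat Steps~2 and 3 as fully independent: one must either (a) pre-reserve, before Step~2, a sublinear matching of edges of $H$ and pre-commit them to the "dangerous" colours (those with $e(G_c)$ close to the minimum $dm^2$), embedding those $H$-edges into $G_c$ by hand using superregularity, and run the blow-up lemma on the rest with appropriate image restrictions; or (b) use the version of the absorption lemma from~\cite{MMP} stated for graph collections, which is engineered precisely to handle this and whose hypotheses ($e(G_c)\ge dm^2$ for all $c$, together with $\mu$-separability of $H$) match ours. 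I expect the paper takes route (b), invoking its own Section~\ref{sec:reg} embedding lemmas "with prescribed colours" to pre-embed a bounded-degree sublinear sub-forest of $H$ carrying the rare colours, and then combining with the blow-up lemma for the bulk; verifying the arithmetic of the hierarchy $\eps,\mu,\aA\ll\nu,d,\dD,1/\DD,1/r$ so that all these sublinear reservations fit simultaneously is the main technical bookkeeping.
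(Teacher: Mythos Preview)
Your high-level strategy --- combine the uncoloured blow-up lemma with the colour-absorption machinery of~\cite{MMP} --- is exactly right, and is what the paper does. But the specific two-phase plan (embed all of $H$ uncoloured into $\bigcup_c G_c$, then colour via a perfect matching in $\Lambda_{ij}$) has a genuine gap at each phase.

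First, embedding into the union $F_{ij}=\bigcup_{c\in\Cols_{ij}}G_c$ does not give what you need in Step~3. An edge of $F_{ij}$ may lie in only a single $G_c$, so your claim that each embedded pair $\phi(x)\phi(y)$ has $\geq\delta|\Cols_{ij}|$ valid colours is unjustified: the density hypothesis is about $\eps$-large subsets, not single pairs. The paper fixes this by working instead with the \emph{thick graph} $T^\lambda_{\mc{F}}$ consisting of pairs lying in at least $\lambda|\Cols_{ij}|$ many $G_c$; Proposition~\ref{lm:thick} shows this is still $(\eps,d/2)$-half-superregular, so the blow-up lemma applies to it and every embedded edge now has a linear number of colour choices.

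Second, even with the thick graph, your Step~3 fails from the colour side of $\Lambda_{ij}$: a fixed colour $c$ with $e(G_c)\geq dm^2$ need not appear on any pair $\phi(x)\phi(y)$ used by the embedding, so Hall can fail at $\{c\}$. You correctly flag this as the main obstacle, but your ``route~(b)'' is too vague to close it. The paper's solution is not to decouple embedding from colouring, but to interleave them across five stages. One partitions $H=H_{\con}\cup H_{\abs}\cup H_{\app}\cup H_{\col}\cup H_{\vx}$ with $|V(H_\con)|\leq\mu n$ (the separating set) and $p_{\abs}\ll p_{\col}\ll p_{\vx}\ll 1$. Then: embed $H_{\con}$ greedily with candidate sets for its boundary (Lemma~\ref{lm:partial}); embed $H_{\abs}$ into the thick graph and use Lemma~\ref{lm:mmp} to reserve absorber colour sets $\ms{A}_e,\ms{B}_e$; embed $H_{\app}$ using almost all colours of $\Cols_e\setminus(\ms{A}_e\cup\ms{B}_e)$ via Lemma~\ref{lm:approx} (which handles unions of small components when there is a surplus of colours); force the leftover non-reserved colours onto $H_{\col}$ via Lemma~\ref{lm:partialcol} (embedding with prescribed colours); embed $H_{\vx}$ using colours from $\ms{B}_e$; and finally invoke the absorber property of $\ms{A}_e,\ms{B}_e$ to colour $H_{\abs}$. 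Separability is used not just for an absorber reservation but throughout, to make each of $H_{\abs},H_{\app},H_{\col},H_{\vx}$ a union of tiny components so that Lemma~\ref{lm:approx} applies.
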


\subsection{Notation and organisation}

\textbf{Notation.} For reals $x,a,b$, we write $x=a\pm b$ if we have $a-b\leq x\leq a+b$. For any two constants $\alpha,\beta \in (0,1)$, we write $\alpha \ll \beta$ if there exists a function $\alpha_0=\alpha_0(\beta)$ such that the subsequent arguments hold for all $0<\alpha\leq \alpha_0$. When we write multiple constants in a hierarchy, we mean that they are chosen from right to left. For any positive integer $a$, let $[a]:=\{1,2,\ldots,a\}$.
Given a set $X$ and a positive integer $b$, write $\binom{X}{b}$ to denote the set of $b$-element subsets of $X$.

Let $k \geq 2$ be an integer and let $G=(V,E)$ be any $k$-graph, so each edge consists of $k$ vertices. We use $V(G):=V$ to denote its vertex set and $E(G):=E$ to denote its edge set. Let $v(G):=|V(G)|$ and $e(G):=|E(G)|$ be their sizes. For any vertex $v\in V(G)$, let $N_G(v)$ be the \emph{neighbourhood} of $v$, that is, the set of $(k-1)$-tuples of $v$ that are incident to $v$ and let $d_G(v):=|N_G(v)|$ be the \emph{degree} of $v$. Sometimes we use $d(v)$ for short if $G$ is obvious from the text. 
We write $\dD(G) := \min_{v \in V(G)}d_G(v)$ and $\DD(G) := \max_{v \in V(G)}d_G(v)$ for the minimum and maximum degree of $G$.
%If $H$ is a subgraph of $G$, then we write $G-H=G-V(H)$. 
For each vertex $v\in V(G)$ and subset $S \subseteq \binom{V(G)}{k-1}$, let $N_G(v,S):=N_G(v)\cap S$ and $d_G(v,S):=|N_G(v,S)|$. 
For any vertex set $U\subseteq V(G)$, let $G[U]$ be the induced hypergraph of $G$ on $U$, i.e.~the hypergraph with vertex set $U$ and all the edges of $G$ with vertices only in $U$. Let $G-U:=G[V(G)\sm U]$. 
For a subhypergraph $H$ of $G$, let $G\sm H$ be the graph with vertex set $V(G)$ and edge set $E(G)\sm E(H)$.
For any (not necessarily disjoint) vertex sets $U_1,\ldots,U_k \subseteq V(G)$, we write $G[U_1,\ldots,U_k]$ for the $k$-graph with vertex set $U_1 \cup \ldots \cup U_k$ and edge set $E_G(U_1,\ldots,U_k)$, which is the set of edges of $G$ with one vertex in $U_i$ for each $i \in [k]$.
Let $e_G(U_1,\ldots,U_k)$ be the number of $k$-tuples $(x_1,\ldots,x_k) \in U_1\times \ldots \times U_k$ for which $\{x_1,\ldots,x_k\} \in E_G(U_1,\ldots,U_k)$
(note that when these sets are not disjoint, edges may be counted more than once).
%For any two sets $X,Y \subseteq V$, define their  \emph{difference set} $X\sd Y=(X\cup Y)\sm (X\cap Y)$. 

We say that a statement about a graph on $n$ vertices holds \emph{with high probability}
if it holds with probability tending to $1$ as $n\to\infty$.
We use script letters (e.g.~$\ms{C}$) to denote sets of colours and bold letters (e.g.~$\bm{G}$) to denote graph collections.
It will be convenient to represent a graph collection in three equivalent ways:
\begin{itemize}
\item a collection $\bm{G}=(G_c: c \in \Cols)$ of graphs on the same vertex set $V$. We call $\Cols$ the \emph{colour set} of $\bm{G}$;
\item an edge-coloured graph $\mc{G}$ on vertex set $V$ with edge set $\bigcup_{c \in \Cols}G_c$, where $xy$ has a multiset of colours consisting of those $c \in \Cols$ for which $xy \in G_c$.
We say that $\mc{G}$ is the \emph{edge-coloured graph of $\bm{G}$} (but rarely use this representation);
\item a $3$-graph $G^{(3)}$ on vertex set $V \cup \Cols$ with edges $xyc$ whenever $xy \in G_c$. We say that $G^{(3)}$ is the \emph{$3$-graph of $\bm{G}$}.
\end{itemize}
Given two $2$-graphs $H$ and $G$, a \emph{graph homomorphism (from $H$ to $G$)} is a map $\phi: V(H) \to V(G)$ such that $\phi(x)\phi(y) \in E(G)$ whenever $xy \in E(H)$. An injective graph homomorphism is an \emph{embedding (of $H$ into $G$)}.
Given a graph $H$ and a graph collection $\bm{G}=(G_c: c \in \Cols)$ on vertex set $V$, a \emph{transversal embedding (of $H$ into $\bm{G}$)} is a pair $\tau:V(H) \to V$ and $\sS:E(H) \to \Cols$ of injective maps such that $\tau(x)\tau(y) \in E(G_{\sS(xy)})$ whenever $xy \in E(H)$.

%Given any collection $\bm{G}=(G_c: c \in \Cols)$ of graphs. on the same vertex set $V$. For any two sets $X,Y \subseteq V$, let $E_{\bm{G}}(X,Y)$ be the set of edges of any colour with one endpoint in $X$ and another in $Y$. Given any colour set $\Cols' \subseteq \Cols$, let $E^{\Cols'}_{\bm{G}}(X,Y)$ be the set of edges in $E_{\bm{G}}(X,Y)$ using colours in $\Cols'$. Given any edge $e$ of $\bm{G}$, let $col(e)$ be its colour. For any edge set $E'$ of $\bm{G}$, we define $col(E')=\{col(e)\mid e\in E'\}$. Given any subgraph $H$, we define $col(H)=col(E(H))$ for simplicity.

\medskip
\noindent
\textbf{Organisation.}
%The final subsection of this section contains the full statement of our transversal blow-up lemma.
In Section~\ref{sec:reg}, we define regularity for graph collections, state a regularity lemma for graph collections, define the `template' graph collections to which our theory applies, and prove some of the basic tools one uses when applying the regularity method, such as a slicing lemma and a degree inheritance lemma.
Section~\ref{sec:embed} contains some embedding lemmas which are the main ingredients of the proof of our transversal blow-up lemma, Theorem~\ref{th:blowup}, but which are also useful tools when applying the method.
In Section~\ref{sec:blowup}, we prove Theorem~\ref{th:blowup}
(which readily implies Theorem~\ref{th:blowupintro})
and in Section~\ref{sec:dense}, we prove Theorems~\ref{th:quasi} and~\ref{th:quasi3} on embeddings in uniformly dense graph collections and $3$-graphs.
Section~\ref{sec:conclude} contains some concluding remarks.

\section{The regularity-blow-up method for transversals}\label{sec:reg}

\subsection{Regularity}

Our notion of regularity for a graph collection $\bm{G}$ is essentially weak regularity for the $3$-graph $G^{(3)}$ of $\bm{G}$. We now define several related notions of weak regularity for $k$-graphs, which we will use for $k=2,3$. We usually omit \emph{weak} and \emph{weakly} since we will not use any stronger type of regularity.

%Given a $3$-tuple $(V_1,V_2,V_3)$ of disjoint subsets of a $3$-graph $G$, we define $(V_1,V_2,V_3)_G$ to be the $3$-partite subhypergraph of $G$ consisting of every edge that contains one vertex in each $V_i$.

Let $G$ be a $k$-partite $k$-graph with classes $V_1,\ldots,V_k$,
which we also denote as $(V_1,\ldots,V_k)_G$.
We define the \emph{density} of $G$ to be
$$
d_G(V_1,\ldots,V_k) := \frac{e(G)}{|V_1|\ldots|V_k|}.
$$
Given $\eps>0$, we say that $(V_1,\ldots,V_k)_G$ is 
\begin{itemize}
    \item \emph{(weakly) $\eps$-regular} if for every subhypergraph $(V_1',\ldots,V_k')_G$ with $V_i' \subseteq V_i$ and $|V_i'| \geq \eps|V_i|$ for all $i \in [k]$ we have
$$
|d_G(V_1',\ldots,V_k') - d_G(V_1,\ldots,V_k)|< \eps;
$$
\item \emph{(weakly) $(\eps,d)$-regular} if additionally $d_G(V_1,\ldots,V_k)\geq d$;
\item \emph{(weakly) $(\eps,d$)-superregular} if it is (weakly) $(\eps,d)$-regular and additionally $d_G(x)\geq d(|V_1|\ldots|V_k|)/|V_i|$ for all $i \in [k]$ and $x \in V_i$;
\item \emph{(weakly) $(\eps,d)$-half-superregular} if for every subhypergraph $(V_1',\ldots,V_k')_G$ with $V_i' \subseteq V_i$ and $|V_i'| \geq \eps|V_i|$ for all $i \in [k]$ we have $d_G(V_1',\ldots,V_k')\geq d$ and $d_G(x)\geq d(|V_1|\ldots|V_k|)/|V_i|$ for all $i\in[k]$ and $x \in V_i$.
\end{itemize}

Our main results will use `half-superregular' given its close connection to uniform density. The $2$-graph blow-up lemma~\cite{KSS} uses a similar notion, but nowadays `superregular' is generally used instead. This makes little material difference:
by definition, an $(\eps,d$)-superregular hypergraph is $(\eps,d-\eps$)-half-superregular,
and as shown by R\"odl and Ruci\'nski in the course of their alternative proof of the blow-up lemma~\cite{RR}, a half-superregular $2$-graph contains a spanning superregular subgraph, with weaker parameters. This proof generalises easily to $k$-graphs, 
and therefore we postpone it to the appendix.

\begin{lemma}\label{lm:hsrtosr}
Let $0<1/n\ll \eps \ll \eps' \ll d,\dD,1/k \leq 1$ where $k \ge 2$ is an integer. Suppose that $G$ is a $k$-partite $k$-graph with parts $V_1,\ldots,V_k$ where $\dD n\leq |V_i|\leq n/\dD$ for all $i \in [k]$. If $G$ is $(\eps,d)$-half-superregular, then
$G$ contains a spanning subhypergraph $G'$ that is $(\eps',d^2/2)$-superregular.
\end{lemma}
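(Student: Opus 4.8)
The plan is to follow the argument of R\"odl and Ruci\'nski~\cite{RR} for the $2$-graph case, which adapts routinely to $k$-graphs. The idea is to pass to a random subhypergraph $G'$ of $G$, where each edge of $G$ is retained independently with probability $p := d/2$ (one checks $d^2/2 = p \cdot d$, which will be the target density). I would first record the two easy deterministic consequences of half-superregularity that survive sparsification. Since $G$ is $(\eps,d)$-half-superregular, for every choice of $V_i' \subseteq V_i$ with $|V_i'| \geq \eps |V_i|$ we have $e_G(V_1',\ldots,V_k') \geq d\prod_i |V_i'| \geq d \eps^k \prod_i |V_i|$, which is $\Omega(n^k)$ since each $|V_i| \geq \dD n$; and every vertex $x \in V_i$ has $d_G(x) \geq d \prod_{j\neq i}|V_j| = \Omega(n^{k-1})$. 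These are the only two quantities we need to control after sparsifying.

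Next I would apply concentration. Fix a choice of subsets $(V_1',\ldots,V_k')$ with each $|V_i'|\geq\eps'|V_i|$; then $\mathbb{E}[e_{G'}(V_1',\ldots,V_k')] = p \cdot e_G(V_1',\ldots,V_k') \geq p d \prod_i |V_i'|$, and this expectation is $\Omega(n^k)$, so by Chernoff's bound the probability that $e_{G'}(V_1',\ldots,V_k') < (1-\gamma) p\, e_G(V_1',\ldots,V_k')$, for a suitable small $\gamma = \gamma(\eps')$, is at most $e^{-\Omega(n^k)}$. There are at most $2^{kn}$ choices of the tuple $(V_1',\ldots,V_k')$, and since $2^{kn} \cdot e^{-\Omega(n^k)} = o(1)$, a union bound shows that with high probability, \emph{simultaneously} for all such tuples, $e_{G'}(V_1',\ldots,V_k') \geq (1-\gamma) p \cdot d \prod_i |V_i'| \geq \frac{d^2}{2}\prod_i|V_i'|$ after absorbing the $(1-\gamma)p = (1-\gamma)d/2$ factor into the bound $d^2/2$ (here one uses $\gamma$ small, or more cleanly retains edges with probability slightly above $d/2$; this is a routine constant chase). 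Taking $V_i' = V_i$ in particular gives density at least $d^2/2$. Since the lower bound holds for every admissible subtuple, not merely an equality up to $\eps'$, this in fact certifies the stronger half-superregular-type density bound, and in particular $(\eps',d^2/2)$-regularity: $|d_{G'}(V_1',\ldots,V_k') - d_{G'}(V_1,\ldots,V_k)| < \eps'$ follows since the density of $G'$ is, with high probability, within $o(1)$ of $p\,d_G(\cdot)$ on every large subtuple by the same Chernoff estimate applied to both tails.

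Similarly, for each vertex $x \in V_i$, $\mathbb{E}[d_{G'}(x)] = p\, d_G(x) \geq p d \prod_{j\neq i}|V_j| = \Omega(n^{k-1})$, so by Chernoff and a union bound over the at most $n$ vertices, with high probability $d_{G'}(x) \geq (1-\gamma) p\, d_G(x) \geq \frac{d^2}{2}\prod_{j\neq i}|V_j| = \frac{d^2}{2}\bigl(\prod_j |V_j|\bigr)/|V_i|$ for all $x$, which is exactly the minimum-degree condition in the definition of $(\eps',d^2/2)$-superregularity. Intersecting the two high-probability events, some outcome $G'$ has all the required properties; fixing such a $G'$ completes the proof. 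The only mildly delicate point is the constant bookkeeping — choosing the retention probability and $\gamma$ so that all the $(1-\gamma)p d$ bounds land cleanly at $d^2/2$ while $\eps \ll \eps'$ keeps the error terms negligible against the exponentially small failure probabilities — but there is no real obstacle here; everything is driven by the fact that all relevant counts are of order a positive power of $n$, so Chernoff plus a union bound over the $2^{O(n)}$ relevant events closes. I would then remark that this is the promised generalisation of~\cite{RR} and, as stated in the excerpt, defer the detailed write-up to the appendix.
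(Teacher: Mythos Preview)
There is a genuine gap. Your uniform sparsification with probability $p=d/2$ cannot yield $(\eps',d^2/2)$-regularity, because half-superregularity gives only a \emph{lower} bound on subtuple densities, not a two-sided one. Concretely, take $k=2$, $V_1=V_2=[n]$, put all edges between the first halves of $V_1,V_2$, and edges of density $d$ elsewhere. This is $(\eps,d)$-half-superregular, but the density on different large subpairs ranges from $d$ to $1$. After retaining each edge with probability $p$, the density on these subpairs ranges (whp) from roughly $pd$ to $p$, a gap of $p(1-d)=d(1-d)/2 \gg \eps'$. Your claim that ``$|d_{G'}(V_1',\ldots,V_k')-d_{G'}(V_1,\ldots,V_k)|<\eps'$ follows since $d_{G'}$ is within $o(1)$ of $p\,d_G(\cdot)$'' is precisely where the argument breaks: concentration tells you $d_{G'}\approx p\,d_G$ on each subtuple, but $d_G$ itself is not regular, so neither is $p\,d_G$.

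The missing idea --- and what the paper (following R\"odl--Ruci\'nski) actually does --- is to first apply a weak regularity lemma to refine each $V_i$ into a bounded number of subparts so that almost all $k$-tuples of subparts are $\eps''$-regular with some density $d'\geq d$. One then sparsifies each such regular $k$-tuple \emph{with its own probability} $d/d'$, so that after sparsification all these pieces have density $d\pm O(\eps'')$. Regularity of $G'$ on an arbitrary large $(A_1,\ldots,A_k)$ is then obtained by chopping each $A_i$ along the refinement and summing the now-uniform contributions from the regular pieces. The degree bound survives because each vertex started with $\Omega(n^{k-1})$ edges and we keep at least a $d$-fraction in expectation. In short: the point of the R\"odl--Ruci\'nski argument is not ``sparsify and use Chernoff'' but ``regularise, then sparsify \emph{non-uniformly} to flatten the densities''.
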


Regularity for a bipartite graph collection $\bm{G}$ is defined in terms of the $3$-graph $G^{(3)}$ of $\bm{G}$.

\begin{defn}[Regularity, semi- and half-superregularity, superregularity]
Suppose that $\bm{G}$ is a graph collection with colour set $\Cols$, where each $G_c$ is bipartite with parts $V_1,V_2$. 
Let $G^{(3)}$ be the $3$-graph of $\bm{G}$.
We say that
\begin{itemize}
\item $\bm{G}$ is \emph{$(\eps,d)$-regular} if $G^{(3)}$ is $(\eps,d)$-regular. That is, for all $V_i' \subseteq V_i$ with $|V_i'| \geq \eps|V_i|$ for $i=1,2$ and $\Cols' \subseteq \Cols$ with $|\Cols'| \geq \eps|\Cols|$, we have
$$
\left|\frac{\sum_{c \in \Cols'}e_{G_c}(V_1',V_2')}{|\Cols'||V_1'||V_2'|}-\frac{\sum_{c \in \Cols}e_{G_c}(V_1,V_2)}{|\Cols||V_1||V_2|}\right| < \eps.
$$
\item $\bm{G}$ is \emph{$(\eps,d)$-semi-superregular} if it is $(\eps,d)$-regular
and $d_{G^{(3)}}(v) = \sum_{c \in \Cols}d_{G_c}(v) \geq d|V_{3-i}||\Cols|$ for all $i \in [2]$ and $v \in V_i$.
\item $\bm{G}$ is \emph{$(\eps,d)$-superregular} if $G^{(3)}$ is $(\eps,d)$-superregular. That is, it is $(\eps,d)$-semi-superregular and $d_{G^{(3)}}(c) = e(G_c) \geq d|V_1||V_2|$ for all $c \in \Cols$.
\item $\bm{G}$ is \emph{$(\eps,d)$-half-superregular} if $G^{(3)}$ is $(\eps,d)$-half-superregular. That is, for all $V_i' \subseteq V_i$ with $|V_i'| \geq \eps|V_i|$ for $i=1,2$ and $\Cols' \subseteq \Cols$ with $|\Cols'| \geq \eps|\Cols|$, we have $\sum_{c \in \Cols'}e_{G_c}(V_1',V_2') \geq d|\Cols'||V_1'||V_2'|$ and $\sum_{c \in \Cols}d_{G_c}(v) \geq d|V_{3-i}||\Cols|$ for all $i=1,2$ and $v \in V_i$, and $e(G_c) \geq d|V_1||V_2|$ for all $c \in \Cols$.
\end{itemize}
\end{defn}

%Note that $d_{G^{(3)}}(v)=\sum_{c \in \Cols}d_{G_c}(v)$ and $d_{G^{(3)}}(c)=e(G_c)$.
Note that, if every $G_c$ with $c \in \Cols$ is the same, then $\bm{G}$ is $(\eps,d)$-regular if and only if $G_c$ is $(\eps,d)$-regular; and $\bm{G}$ is $(\eps,d)$-superregular if and only if $G_c$ is $(\eps,d)$-superregular.
The superregularity of $\bm{G}$ does not imply a minimum degree condition for any graph $G_c$ in the collection, and indeed they could all have isolated vertices. 

The following simple lemma shows that, in an $(\eps,d)$-regular graph collection, most vertices -- \emph{typical} vertices -- have large total degree (the sum of degrees over all colours) and typical colours have many edges.

\begin{lemma}[Typical vertices and colours]\label{lm:standard}
Let $0<\eps \ll d \leq 1$, and let $\bm{G}$ be an $(\eps,d)$-regular graph collection with colour set $\Cols$, where each $G_c$ is bipartite with parts $V_1,V_2$. Then the following hold:
\begin{itemize}
\item[(i)] for every $i \in [2]$ and all but at most $\eps|V_i|$ vertices $v \in V_i$ we have $\sum_{c \in \Cols}d_{G_c}(v) \geq (d-\eps)|V_{3-i}||\Cols|$;
\item[(ii)] for all but at most $\eps|\Cols|$ colours $c \in \Cols$ we have $e(G_c) \geq (d-\eps)|V_1||V_2|$.
\end{itemize}
\end{lemma}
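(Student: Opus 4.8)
The plan is to prove both statements by a standard averaging/regularity argument, taking the "bad" set to be vertices (resp.\ colours) whose degree is deficient, and showing that if this set were too large, it would witness a violation of $\eps$-regularity. I will prove (i); the argument for (ii) is symmetric after swapping the roles of the vertex classes and the colour class in the $3$-graph $G^{(3)}$.

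First I would fix $i \in [2]$; say $i=1$, and write $V_2$ for the other class. Let $d' := d_{G^{(3)}}(V_1,V_2,\Cols) \ge d$ be the overall density. Define the bad set $B := \{v \in V_1 : \sum_{c \in \Cols}d_{G_c}(v) < (d-\eps)|V_2||\Cols|\}$, and suppose for contradiction that $|B| \geq \eps|V_1|$. Since the definition of $\eps$-regularity of $\bm{G}$ is exactly $\eps$-regularity of $G^{(3)}$ with the triple of parts $(V_1,V_2,\Cols)$, and $B \subseteq V_1$ has $|B|\geq \eps|V_1|$ while $V_2,\Cols$ have size at least $\eps$ times themselves, we may apply $\eps$-regularity to the subhypergraph $(B,V_2,\Cols)_{G^{(3)}}$ to get $d_{G^{(3)}}(B,V_2,\Cols) > d' - \eps \geq d - \eps$. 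But
$$
d_{G^{(3)}}(B,V_2,\Cols) = \frac{\sum_{v \in B}\sum_{c \in \Cols}d_{G_c}(v)}{|B||V_2||\Cols|} < \frac{|B| \cdot (d-\eps)|V_2||\Cols|}{|B||V_2||\Cols|} = d-\eps,
$$
using the definition of $B$ for the middle inequality (here $e_{G^{(3)}}(B,V_2,\Cols) = \sum_{v\in B}d_{G^{(3)}}(v) = \sum_{v\in B}\sum_{c\in\Cols}d_{G_c}(v)$ since the parts are disjoint). This contradicts the lower bound from regularity, so $|B| < \eps|V_1|$, which is (i).

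For (ii) I would repeat the same argument with the colour class playing the role that $V_1$ played above: set $B' := \{c \in \Cols : e(G_c) < (d-\eps)|V_1||V_2|\}$, note $e(G_c) = d_{G^{(3)}}(c)$, assume $|B'| \geq \eps|\Cols|$, apply $\eps$-regularity to $(V_1,V_2,B')_{G^{(3)}}$, and derive the same contradiction from $e_{G^{(3)}}(V_1,V_2,B') = \sum_{c\in B'}e(G_c)$. I do not expect any real obstacle here: the only thing to be careful about is that $\eps$-regularity as defined compares densities of \emph{sub}-triples to the density of the whole triple (which is at least $d$), so the bound one actually gets is $> d' - \eps$, and one uses $d' \ge d$ to conclude; and that because the three parts $V_1,V_2,\Cols$ are pairwise disjoint, the edge count $e_{G^{(3)}}$ over a sub-triple is a genuine sum of degrees with no overcounting. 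Everything else is a one-line averaging computation.
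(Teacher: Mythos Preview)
Your proof is correct and follows essentially the same approach as the paper: define the bad set, assume it has size at least $\eps|V_i|$ (resp.\ $\eps|\Cols|$), apply $\eps$-regularity to obtain density $> d-\eps$, and contradict this with the averaging bound coming from the definition of the bad set. The only differences are cosmetic: you make the use of $d' \geq d$ explicit and spell out the edge-count-equals-sum-of-degrees identity, which the paper leaves implicit.
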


\begin{proof}
For~(i), let $V_1'$ be the set of vertices in $V_1$ without this property and suppose for a contradiction that $|V_1'|>\eps|V_1|$. Then $(\eps,d)$-regularity implies that
$d_{G^{(3)}}(V_1',V_2,\Cols)>d-\eps$, but the definition of $V_1'$ implies that $e_{G^{(3)}}(V_1',V_2,\Cols)\leq|V_1'|(d-\eps)|V_2||\Cols|$ and hence $d_{G^{(3)}}(V_1',V_2,\Cols) \leq d-\eps$, a contradiction.
By symmetry, the rest of~(i) and~(ii) are identical.
\end{proof}

The following lemma is a standard tool, written in our graph collection notation.

\begin{lemma}[Slicing lemma]\label{lm:slice}
Let $0<1/n \ll \eps \ll \eps' \ll \aA \ll d \leq 1$, and let $\bm{G}$ be a graph collection with colour set $\Cols$, where each $G_c$ is bipartite with parts $V_1,V_2$ each of size at least $n$, and let $V_i' \subseteq V_i$ for $i \in [2]$ and $\Cols' \subseteq \Cols$. Let $\bm{G}'=(G_c[V_1',V_2']:c \in \Cols')$.
\begin{itemize}
\item[(i)] Suppose that $\bm{G}$ is $(\eps,d)$-regular.
Suppose $|V_i'| \geq \aA|V_i|$ for $i \in [2]$ and $|\Cols'| \geq \aA|\Cols|$. Then $\bm{G}'$ is $(\eps/\aA,d/2)$-regular.
\item[(ii)] Suppose that $\bm{G}$ is $(\eps,d)$-superregular. Suppose $|V_i'| \geq (1-\aA)|V_i|$ for $i \in [2]$ and $|\Cols'|>(1-\aA)|\Cols|$. Then $\bm{G}'$ is $(2\eps,d/2)$-superregular.
\item[(iii)] Suppose that $\bm{G}$ is $(\eps,d)$-superregular. Given $n_1\geq \aA |V_1|,n_2 \geq \aA |V_2|$ and $h \geq \aA |\Cols|$, if $V_i'\subseteq V_i$ is a uniform random subset of size $n_i$ for each $i=1,2$
and $\Cols'\subseteq \Cols$ is a uniform random subset of size $h$, then with high probability,
$\bm{G}'$ is $(\eps/\aA,d^2/16)$-superregular.
%\item[(iv)] Suppose $\mc{F}$ is half-super. For all $c \in \Cols$, there is $G_c^* \subseteq G_c$ such that, defining $\bm{G}^*:=(G_c^*:c \in \Cols)$, the template $(\mc{V},\Cols,\bm{G}^*)$ is super with parameters $(m,\eps',d^2/2,\dD)$.
\end{itemize}
\end{lemma}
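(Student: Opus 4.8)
The plan is to prove the Slicing Lemma (Lemma~\ref{lm:slice}) by unpacking the definitions in terms of the associated $3$-graph $G^{(3)}$ of $\bm{G}$ and then invoking essentially the classical slicing arguments for weak regularity, with the extra ``colour'' coordinate $\Cols$ playing the role of a third vertex class. Throughout I would write $n_i' := |V_i'|$, $h := |\Cols'|$, and let $d_0 := d_{G^{(3)}}(V_1,V_2,\Cols) \ge d$ be the overall density; note $d_0 \le 1$.

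\textbf{Part (i).} First I would observe that, since $\bm{G}$ is $(\eps,d)$-regular, for \emph{every} triple of subsets $W_i \subseteq V_i$ with $|W_i| \ge \eps|V_i|$ and $\Cols'' \subseteq \Cols$ with $|\Cols''| \ge \eps|\Cols|$ we have $|d_{G^{(3)}}(W_1,W_2,\Cols'') - d_0| < \eps$. Now take any $W_i \subseteq V_i'$ with $|W_i| \ge (\eps/\aA)|V_i'|$ and any $\Cols'' \subseteq \Cols'$ with $|\Cols''| \ge (\eps/\aA)|\Cols'|$; then since $|V_i'| \ge \aA|V_i|$ and $|\Cols'| \ge \aA|\Cols|$ we get $|W_i| \ge \eps|V_i|$ and $|\Cols''| \ge \eps|\Cols|$, so the displayed regularity estimate applies to $(W_1,W_2,\Cols'')$ and also to $(V_1',V_2',\Cols')$ itself. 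Subtracting, $|d_{G^{(3)}}(W_1,W_2,\Cols'') - d_{G^{(3)}}(V_1',V_2',\Cols')| < 2\eps$. Since $\eps \ll \eps'$ (and we may assume $2\eps < \eps/\aA$ after shrinking $\eps$ relative to $\aA$), this shows $\bm{G}'$ is $(\eps/\aA)$-regular; and as the density of $\bm{G}'$ is $d_{G^{(3)}}(V_1',V_2',\Cols') > d_0 - \eps \ge d - \eps \ge d/2$, it is $(\eps/\aA, d/2)$-regular.

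\textbf{Part (ii).} The regularity part is the same computation as in (i) with $\aA$ replaced by $1-\aA$ (so the slicing factor $1/(1-\aA) \le 2$, giving $2\eps$-regularity), and the density drops by less than $\eps < d/2$. For superregularity I additionally need the degree conditions. For the vertex degrees: fix $v \in V_i'$; superregularity of $\bm{G}$ gives $d_{G^{(3)}}(v) \ge d|V_{3-i}||\Cols|$, i.e. $v$ has at least $d|V_{3-i}||\Cols|$ pairs $(w,c) \in V_{3-i} \times \Cols$ with $vwc$ an edge. Removing the at most $\aA|V_{3-i}|$ vertices outside $V_{3-i}'$ destroys at most $\aA|V_{3-i}||\Cols|$ of these, and removing the at most $\aA|\Cols|$ colours outside $\Cols'$ destroys at most $|V_{3-i}|\cdot\aA|\Cols|$ more, so $d_{(G')^{(3)}}(v) \ge (d - 2\aA)|V_{3-i}'||\Cols'| / (\text{ratios})$; being slightly careful with the normalisation, since $\aA \ll d$ this is at least $(d/2)|V_{3-i}'||\Cols'|$. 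The colour degrees $e(G_c)$ for $c \in \Cols'$ are handled identically, restricting from $V_1,V_2$ to $V_1',V_2'$ and losing at most a $2\aA$ additive factor in density. Hence $\bm{G}'$ is $(2\eps, d/2)$-superregular.

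\textbf{Part (iii).} This is the genuinely probabilistic part and where I expect the main (though still routine) work to lie. The regularity of $\bm{G}'$ can again be deduced deterministically: for \emph{any} choice of $V_i'$ of size $n_i \ge \aA|V_i|$ and $\Cols'$ of size $h \ge \aA|\Cols|$, part (i) already gives $(\eps/\aA)$-regularity with density $\ge d/2$, hence $(\eps/\aA, d^2/16)$-regularity (indeed better). So the only thing requiring randomness is the pair of degree conditions. Fix $v \in V_i$ that happens to land in $V_i'$. By Lemma~\ref{lm:standard}(i), all but at most $\eps|V_i|$ vertices $v \in V_i$ satisfy $d_{G^{(3)}}(v) \ge (d-\eps)|V_{3-i}||\Cols|$; call these \emph{good}. (The few non-good vertices are the reason the statement of the blow-up lemma later passes to subsets; but since here we are slicing a superregular collection, in fact \emph{every} $v \in V_i$ is good with slack $d$.) For a fixed good $v$, the quantity $d_{(G')^{(3)}}(v) = \#\{(w,c) \in V_{3-i}' \times \Cols' : vwc \in E(G^{(3)})\}$ is a sum over the random choices of $V_{3-i}'$ and $\Cols'$; its expectation is $\frac{n_{3-i} h}{|V_{3-i}||\Cols|} d_{G^{(3)}}(v) \ge \frac{n_{3-i} h}{|V_{3-i}||\Cols|}(d-\eps)|V_{3-i}||\Cols| \ge (d - \eps) n_{3-i} h \ge (d^2/16) n_{3-i} h$ with huge room to spare. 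A standard concentration estimate for the hypergeometric / sampling-without-replacement distribution (Azuma--Hoeffding applied to the vertex-exposure martingale, or the bounds recorded in the commented-out section of the paper) shows that $d_{(G')^{(3)}}(v)$ deviates from its mean by more than, say, a $(1/2)$-factor with probability $\exp(-\Omega(n))$; a union bound over the at most $|V_1| + |V_2| \le 2n/\dD$ vertices, and symmetrically over the at most $|\Cols|$ colours (each with expected $e(G_c[V_1',V_2'])$ of order $d \cdot n_1 n_2 \gg (d^2/16) n_1 n_2$), shows that with high probability all vertex- and colour-degrees in $\bm{G}'$ exceed the required threshold. Combining the deterministic regularity with these whp degree bounds gives that $\bm{G}'$ is $(\eps/\aA, d^2/16)$-superregular with high probability, as claimed. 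The main obstacle, such as it is, is bookkeeping the normalisation factors in the degree conditions and choosing the concentration tool so that the failure probability beats the (polynomially many) union-bound events — but none of this is conceptually hard, and the whole lemma is standard once translated through $G^{(3)}$.
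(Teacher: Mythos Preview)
Your proof is correct and follows essentially the same route as the paper. Parts (i) and (ii) are identical to the paper's argument (triangle inequality on densities; subtracting the at most $2\aA|V_{3-i}||\Cols|$ lost pairs). For part (iii), the paper handles the degree conditions by a two-step Chernoff argument---e.g.\ for a colour $c$ it first isolates the set $V_1^c=\{v\in V_1:d_{G_c}(v)\ge d|V_2|/2\}$ of size at least $d|V_1|/2$, then shows via hypergeometric Chernoff that $|V_1'\cap V_1^c|\ge dn_1/4$ and that each such vertex keeps $\ge dn_2/4$ neighbours in $V_2'$, yielding $e(G_c')\ge d^2n_1n_2/16$---whereas you compute the expectation of the bilinear count directly and invoke Azuma/McDiarmid on the product of two independent random subsets. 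Both are standard and give $\exp(-\Omega(n))$ failure probability; the paper's version needs only univariate hypergeometric tail bounds and makes the constant $d^2/16$ transparent, while yours is a line shorter but requires tracking the Lipschitz constants across both samplings.
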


\begin{proof}
For (i), firstly we have $d_{G^{(3)}}(V_1^{\prime},V_2^{\prime},\Cols')\geq d-\eps\geq d/2$. Secondly, given any subset $V_i^{\prime\prime}\subseteq V_i'$ with $|V_i^{\prime\prime}|\geq \eps|V_i^{\prime}|/\aA\geq \eps |V_i|$ where $i=1,2$ and any subset $\Cols^{\prime\prime}\subseteq \Cols^{\prime}$ with $|\Cols^{\prime\prime}|\geq \eps |\Cols^{\prime}|/\aA\geq \eps |\Cols|$, by regularity we have
\begin{align*}
&|d_{G^{(3)}}(V_1^{\prime\prime},V_2^{\prime\prime},\Cols^{\prime\prime})-d_{G^{(3)}}(V_1^{\prime},V_2^{\prime},\Cols^{\prime})|\\
&\leq
|d_{G^{(3)}}(V_1^{\prime\prime},V_2^{\prime\prime},\Cols^{\prime\prime})-d_{G^{(3)}}(V_1,V_2,\Cols)|
+|d_{G^{(3)}}(V_1,V_2,\Cols)-d_{G^{(3)}}(V_1^{\prime},V_2^{\prime},\Cols^{\prime})|\leq 2\varepsilon \leq \eps/\aA.
\end{align*}
For (ii), note that $\bm{G}'$ is $(\eps/(1-\aA),d/2)$-regular and thus $(2\eps,d/2)$-regular by (i). Let $G_c':=G_c[V_1',V_2']$ for each $c \in \Cols$. For each $i=1,2$ and $v\in V_{i}'$, we also have $\sum_{c\in \Cols'}d_{G_c'}(v)\geq d|V_{3-i}||\Cols|-2\aA|V_{3-i}||\Cols|\geq d|V_{3-i}'||\Cols'|/2$. Combining this with $e(G_c')\geq d|V_1||V_2|-2\aA|V_1||V_2|\geq d|V_1||V_2|/2$ for each $c\in \Cols'$, we get that $\bm{G}'$ is $(2\eps,d/2)$-superregular.

For (iii), similarly, we always have that $\bm{G}'$ is $(\eps/\aA,d/2)$-regular by~(i). Let $G_c':=G_c[V_1',V_2']$ for all $c \in \Cols$. We only need to show that with high probability, for each $i=1,2$ and $v\in V_i'$, we have $\sum_{c\in \Cols'}d_{G_c'}(v)\geq d^2|V_{3-i}'||\Cols'|/16$ and for each $c\in \Cols'$, we have $e(G_c')\geq d^2|V_1||V_2|/16$. Suppose that $\Cols'$ has been chosen. Since $\bm{G}$ is $(\eps,d)$-superregular, for each $c\in \Cols'$, we have $e(G_c)\geq d|V_1||V_2|$. Let $V_1^c:=\{v\in V_1 : d_{G_c}(v)\geq d|V_2|/2\}$. Then we get $d|V_1||V_2|\leq |V_1^c||V_2|+(|V_1|-|V_1^c|)d|V_2|/2$ and thus $|V_1^c|\geq d|V_1|/(2-d)\geq d|V_1|/2$. %Note that for each fixed $v\in V_1^c$, by hypergeometric distribution, with probability at least $1-e^{-\frac{d^2 n_2}{8}}$, we have $|V_2'\cap N_{G_c}(v)|\geq \frac{d n_2}{4}$. Similarly, with probability at least $1-e^{-\frac{d^2 n_1}{8}}$, we have $|V_1'\cap V_1^c|\geq \frac{d n_1}{4}$. Thus, in total, with relative probability at least $1-|V_1^c|e^{-\frac{d^2 n_2}{8}}-e^{-\frac{d^2 n_1}{8}}=
A Chernoff bound implies that, with probability $1-e^{-\OO(n)}$, we have $|V_1'\cap V_1^c|\geq d n_1/4$ and $|V_2'\cap N_{G_c}(v)|\geq d n_2/4$ for each $v\in V_1^c$.
Therefore, by a union bound, with high probability, we have $e(G_c')\geq d^2n_1n_2/16$ for each $c\in \Cols'$. Similarly, with high probability, for each $i=1,2$ and $v\in V_i'$,  we have $\sum_{c\in \Cols'}d_{G_c'}(v)\geq d^2n_{3-i}h/16$. It follows that $\bm{G}'$ is $(\eps/\aA,d^2/16)$-superregular with high probability.
\end{proof}

%\begin{proof}
%For~(i), if $V_i'' \subseteq V_i'$ with $|V_i''| \geq \eps'|V_i'|$ for $i \in [3]$, then $|V_i''| \geq \eps|V_i|$
%and so $|d(V_1'',V_2'',V_3'')-d(V_1',V_2',V_3')| \leq 2\eps<\eps'$.
%For~(ii), $(V_1',V_2',V_3')$ is $(2\eps,d)$-regular by~(i) and hence $(2\eps,d/2)$-regular and additionally $\sum_{c \in \Cols'}d_{G_c}(v) \geq \sum_{c \in \Cols'}(d_{G_c}(v)-\aA|V_{3-i}|) \geq d|V_{3-i}||\Cols|-|V_{3-i}||\Cols\sm\Cols'|-\aA|V_{3-i}||\Cols'| \geq (d-2\aA)|V_{3-i}||\Cols|$ while
%$e(G_c[V_1',V_2']) \geq e(G_c[V_1,V_2])-|V_1\sm V_1'||V_2|-|V_2\sm V_2'||V_1| \geq (d-2\aA)|V_1|V_2|$.
%\end{proof}

\subsection{The regularity lemma for graph collections}

We use the following version of the regularity lemma for graph collections, which is obtained by applying the degree version of the weak regularity lemma (Lemma~\ref{lm:weakreg}) to the $3$-graph $G^{(3)}$ of $\bm{G}$ and cleaning up the clusters so that vertex clusters and colour clusters are separate.
We postpone the derivation to the appendix.
%In applications, the respective sizes $n$ and $s$ of the vertex set and colour set are comparable.

\begin{lemma}[Regularity lemma for graph collections]\label{lm:weakregcol}
%Let $0<1/n,1/s \ll 1/L_0 \ll \eps \ll d \leq 1$.
For all integers $L_0 \geq 1$ and every $\eps,\dD>0$, there is an $n_0=n_0(\eps,\dD,L_0)$ such that
for every $d \in [0,1)$ and
every graph collection $\bm{G}=(G_c: c \in \Cols)$ on vertex set $V$ of size $n \geq n_0$ with $\dD n \leq |\Cols| \leq n/\dD$, there exists a partition of $V$ into $V_0,V_1,\ldots,V_L$, of $\Cols$ into $\Cols_0,\Cols_1,\ldots,\Cols_M$ and a spanning subgraph $G'_c$ of $G_c$ for each $c \in \Cols$ such
that the following properties hold:
\begin{enumerate}[(i)]
\item $L_0 \leq L,M \leq n_0$ and $|V_0|+|\Cols_0| \leq \eps n$;
\item $|V_1|=\ldots=|V_L|=|\Cols_1|=\ldots = |\Cols_M| =: m$;
\item $\sum_{c \in \Cols}d_{G'_c}(v) > \sum_{c \in \Cols}d_{G_c}(v)-(3d/\dD^2+\eps)n^2$ for all $v \in V$ and
$e(G'_c) > e(G_c)-(3d/\dD^2+\eps)n^2$ for all $c \in \Cols$;
\item if, for $c \in \Cols$, the graph $G'_c$ has an edge with both vertices in a single cluster $V_i$ for some $i \in[L]$, then $c \in \Cols_0$;
\item for all triples $(\{h,i\},j) \in \binom{[L]}{2} \times [M]$, we have that either $G'_c[V_h,V_i]=\emptyset$ for all $c \in \Cols_j$, or
$\bm{G}'_{hi,j} := (G'_c[V_h,V_i]: c \in \Cols_j)$ is $(\eps,d)$-regular.
\end{enumerate}
\end{lemma}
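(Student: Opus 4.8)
The plan is to derive Lemma~\ref{lm:weakregcol} directly from the degree version of the weak regularity lemma for $3$-graphs (Lemma~\ref{lm:weakreg}, referenced above), by applying it to the $3$-graph $G^{(3)}$ of $\bm{G}$ on vertex set $V \cup \Cols$, and then cleaning up the resulting partition so that it respects the bipartition $V \cup \Cols$. First I would fix the auxiliary parameters: choose $\eps' \ll \eps$ and an integer $L_0' \gg L_0$ so that the output parameters of Lemma~\ref{lm:weakreg} applied with $\eps',L_0'$ are fine enough for everything below; also set $d$ as given. Applying Lemma~\ref{lm:weakreg} to $G^{(3)}$ yields a partition of $V \cup \Cols$ into an exceptional part $W_0$ and equal-sized parts $W_1,\ldots,W_K$ with $|W_0| \le \eps'(n+|\Cols|) \le 2\eps' n/\dD$, together with a spanning subhypergraph capturing the degree-cleaning property (iii).

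Next I would clean the partition so that vertex clusters and colour clusters are separate, which is the only genuinely new (if routine) bookkeeping step. Each $W_t$ with $t \in [K]$ is split into $W_t \cap V$ and $W_t \cap \Cols$; I move into the exceptional set whichever of the two is smaller, or more precisely, I move enough vertices/colours so that every surviving cluster is wholly contained in $V$ or wholly in $\Cols$, and then further trim all surviving clusters down to a common size $m$ so that property (ii) holds. Since each $W_t$ has size $(n+|\Cols|)/K \approx$ (a $1/K$ fraction), and $K \ge L_0'$ is large, the total amount moved this way is at most, say, $\tfrac12 \eps n$; combined with the original $|W_0|$ this keeps $|V_0| + |\Cols_0| \le \eps n$, giving (i) (and $L,M \ge L_0$ follows by taking $L_0'$ large enough relative to $L_0$ and $\dD$, since a constant fraction of the clusters lie in each of $V,\Cols$ by the relative sizes $\dD n \le |\Cols| \le n/\dD$). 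To get (iv), observe that any colour $c$ whose cleaned graph $G'_c$ has an edge inside some surviving vertex cluster $V_i$: either handle this by a further small deletion, or — cleaner — note that one can simply declare such $c$ to lie in $\Cols_0$; one must check this set is small, which follows because $\sum_c (\text{number of such edges})$ is controlled by the few irregular/dense triples, so I would instead fold this into the degree-cleaning step by passing to the subhypergraph of $G^{(3)}$ with all edges $xyc$ such that $x,y$ land in the same cleaned cluster removed, absorbing the resulting degree loss into the $3d/\dD^2$ term. Property (iii) is inherited directly: Lemma~\ref{lm:weakreg}'s cleaning removes, for each vertex, at most a $(3d + \eps')$-fraction of the $\binom{\cdot}{2}$-degree in $G^{(3)}$, and rescaling from the $(n+|\Cols|)$-vertex $3$-graph to the two-sided count $\sum_c d_{G_c}(v)$ introduces the factor $1/\dD^2$ (since $|\Cols| \ge \dD n$ and $n+|\Cols| \le (1+1/\dD)n$), with the extra trimming to size $m$ costing only a further $o(n^2)$ absorbed into $\eps n^2$. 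Finally, (v) is exactly the regularity clause of Lemma~\ref{lm:weakreg} transcribed: for a triple $(\{h,i\},j)$, the three parts $V_h,V_i,\Cols_j$ are subsets of three of the original (regular) clusters, so either the induced $3$-partite subhypergraph is empty (equivalently $G'_c[V_h,V_i]=\es$ for all $c \in \Cols_j$) or it is $(\eps,d)$-regular, which by definition is precisely $(\eps,d)$-regularity of $\bm{G}'_{hi,j}$; a slicing argument (Lemma~\ref{lm:slice}(i)) absorbs the slight parameter loss from trimming clusters to size $m$.

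The main obstacle, such as it is, is purely organisational rather than mathematical: making the constants match up through the three successive modifications (weak regularity output $\to$ separating $V$ from $\Cols$ $\to$ trimming to common size $m$), while simultaneously keeping four quantities under control — the exceptional set size $\le \eps n$, the degree-loss term $3d/\dD^2 + \eps$, the cluster count bounds $L_0 \le L, M \le n_0$, and the regularity parameter $\eps$ in (v). The standard trick is to run Lemma~\ref{lm:weakreg} with parameters $\eps' \ll \eps$ and $L_0' \gg L_0, 1/\dD$ chosen at the outset precisely so that each subsequent deletion costs at most $\eps/10 \cdot n$ and each parameter degradation stays within the target. None of the individual estimates is hard; I would therefore present this derivation in the appendix as promised, stating the parameter hierarchy $1/n_0 \ll \eps' \ll \eps, 1/L_0, \dD$ up front and then verifying (i)--(v) in turn.
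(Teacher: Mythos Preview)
Your overall strategy matches the paper's: apply Lemma~\ref{lm:weakreg} to $G^{(3)}$ on $U=V\cup\Cols$, then clean the output so that clusters respect the bipartition $V\cup\Cols$, and verify (i)--(v) via slicing. But your cleaning step has a real gap.

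You write that you ``move into the exceptional set whichever of $W_t\cap V$, $W_t\cap\Cols$ is smaller'' and then assert that ``the total amount moved this way is at most, say, $\tfrac12\eps n$'' because each $W_t$ is a $1/K$-fraction of $U$. That does not follow: nothing in Lemma~\ref{lm:weakreg} prevents \emph{every} cluster $W_t$ from being split roughly evenly between $V$ and $\Cols$, in which case you discard close to $|U|/2$ vertices regardless of how large $K$ is. The subsequent trimming to a common size $m$ has the same problem, since the surviving parts $W_t\cap V$ (or $W_t\cap\Cols$) can range in size anywhere from $|W_t|/2$ to $|W_t|$.

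The paper's fix is to introduce an extra scale $\aA$ with $\eps'\ll\aA\ll\eps$ and further subdivide each $U_t$ into $1/\aA$ subclusters of common size $m=\aA m'$, arranging that all but at most two subclusters of each $U_t$ have size exactly $m$ and lie wholly in $V$ or wholly in $\Cols$. Then only $O(K)$ subclusters are discarded, costing at most $2\aA m'K\le 2\aA|U|\le\eps n$. Regularity of the new (much smaller) subcluster triples then follows by slicing with ratio $\aA$, which is why one needs $\eps'/\aA\le\eps$. (An alternative fix would be to invoke a version of the weak regularity lemma that refines a given initial partition $V\cup\Cols$, but that is not what Lemma~\ref{lm:weakreg} states.) Once this subdivision trick is in place, your verifications of (iii)--(v) go through essentially as you describe; in particular (iv) follows directly from clause (iv) of Lemma~\ref{lm:weakreg} without any extra deletion, since $V_i\subseteq U_{i'}$ forces $c\in U_0\subseteq\Cols_0$.
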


The sets $V_i$ are called \emph{vertex clusters} and the sets $\Cols_j$ are called \emph{colour clusters}, while $V_0$ and $\Cols_0$ are the exceptional vertex and colours sets respectively.

\begin{defn}[Reduced graph collection%, reduced hypergraph, reduced edge-coloured graph
]
Given a graph collection $\bm{G}=(G_c: c \in \Cols)$ on $V$ and parameters $\eps>0, d \in [0,1)$ and $L_0 \geq 1$, the \emph{reduced graph collection} $\bm{R}=\bm{R}(\eps,d,L_0)$, \emph{reduced $3$-graph} $R^{(3)}=R(\eps,d,L_0)$ and the \emph{reduced edge-coloured graph} $\mc{R}=\mc{R}(\eps,d,L_0)$ of $\bm{G}$ are defined as follows.
Apply Lemma~\ref{lm:weakregcol} to $\bm{G}$ with parameters $\eps,d,L_0$
to obtain $\bm{G}'$ and a partition $V_0,\ldots,V_L$ of $V$ and $\Cols_0,\ldots,\Cols_M$ of $\Cols$ where
$V_0$, $\Cols_0$ are the exceptional sets and $V_1,\ldots,V_L$ are the vertex clusters
and $\Cols_1,\ldots,\Cols_M$ are the colour clusters.
Then
%\begin{itemize}
%\item
$\bm{R}=(R_1,\ldots,R_M)$ is a graph collection of $M$ graphs each on the same vertex set $[L]$,
where, for $(\{h,i\},j) \in \binom{[L]}{2} \times [M]$, we have $hi \in R_j$ whenever $\bm{G}'_{hi,j}$ is $(\eps,d)$-regular.

Also, $R^{(3)}$ is the $3$-graph of $\bm{R}$ and $\mc{R}$ is the reduced edge-coloured graph.
\end{defn}

The next lemma (related to Lemma 5.5 in~\cite{KOT})
states that clusters inherit a minimum degree bound in the reduced graph from $\bm{G}$.

\begin{lemma}[Degree inheritance]\label{lm:inherit}
Suppose $p>0$, $L_0 \geq 1$ and $0 < 1/n \ll \eps \leq d \ll \dD,\gG,p \leq 1$.
Let $\bm{G}=(G_c: c \in \Cols)$ be a graph collection on a vertex set $V$ of size $n$ with $\dD(G_c) \geq (p+\gG)n$ for all $c \in \Cols$ and $\dD n \leq |\Cols| \leq n/\dD$.
Let $\bm{R}=\bm{R}(\eps,d,L_0)$ be the reduced graph collection of $\bm{G}$ on $L$ vertices with $M$ graphs.
Then
\begin{enumerate}[(i)]
%\item for every $V_i$ we have $\sum_{j \in [M]}d_{R_j}(i) \geq (p-3d)LM$;
\item for every $i \in [L]$ there are at least $(1-d^{1/4})M$ colours $j \in [M]$ for which $d_{R_j}(i) \geq (p+\gG/2)L$;
\item for every $j \in [M]$ there are at least $(1-d^{1/4})L$ vertices $i \in [L]$ for which $d_{R_j}(i) \geq (p+\gG/2)L$.
%\item for at least $LM-36d^{1/3}(L+M)^2$ pairs $(V_i,\Cols_j)$,
%we have $d_{R_j}(i) \geq (p-4d^{1/3})L$.
\end{enumerate}
\end{lemma}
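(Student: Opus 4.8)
The plan is to apply the degree version of the weak regularity lemma to the $3$-graph $G^{(3)}$ of $\bm{G}$ and then carefully track how the minimum degree $\dD(G_c)\ge(p+\gG)n$ of each colour graph translates into the reduced structure, combined with a counting argument over the pairs $(\{h,i\},j)$ that fail to be regular. For part~(ii), fix a colour cluster $\Cols_j$. Let me first estimate, for a fixed vertex cluster $V_i$ and a fixed colour cluster $\Cols_j$, the "colour-degree mass" $\sum_{c\in\Cols_j}e_{G_c'}(V_i,V\setminus V_0)$. By property~(iii) of Lemma~\ref{lm:weakregcol}, passing from $G_c$ to $G_c'$ loses at most $(3d/\dD^2+\eps)n^2$ from each degree sum, and since $\eps\le d\ll\dD,\gG,p$ this loss is at most $(\gG/8)n^2$, say; discarding $V_0$ and $\Cols_0$ costs at most $\eps n\cdot n\cdot|\Cols_j|$ more, which is again negligible. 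So for every $i\in[L]$,
$$
\sum_{c\in\Cols_j}\sum_{v\in V_i}d_{G_c'}(v,V\setminus V_0)\ \ge\ \sum_{c\in\Cols_j}\sum_{v\in V_i}\big(d_{G_c}(v)-(\gG/4)n\big)\ \ge\ |\Cols_j|\,|V_i|\,(p+3\gG/4)n.
$$
On the other hand this mass is distributed among the $L$ vertex clusters $V_h$, $h\in[L]$, each contributing at most $m^2|\Cols_j|$ if the pair fails to be $(\eps,d)$-regular (since then it is empty by~(v), contributing nothing, or — more conservatively — bounded trivially), and at most $(d+\eps)m^2|\Cols_j|\le 2dm^2|\Cols_j|$ if $hi\notin R_j$, and of course at most $m^2|\Cols_j|$ always; whereas if $hi\in R_j$ the contribution to the per-vertex degree in $R_j$ is what we want to lower-bound.

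Here is the cleaner way to run the count. Call a vertex cluster $V_i$ \emph{bad for $j$} if $d_{R_j}(i)<(p+\gG/2)L$. For such $i$, at most $(p+\gG/2)L$ clusters $V_h$ have $hi\in R_j$, each contributing at most $m^2|\Cols_j|$ to the mass above; the remaining $\ge(1-p-\gG/2)L$ clusters contribute, via property~(v), either $0$ (if $G_c'[V_h,V_i]=\emptyset$ for all $c\in\Cols_j$) — but we cannot assume that — so instead I bound their total contribution using that the whole mass is at most $|\Cols_j|mn$; the point is that bad clusters cannot absorb the full excess $(p+3\gG/4)$. Precisely: if $V_i$ is bad for $j$, its mass is at most $(p+\gG/2)m^2|\Cols_j| + (\text{contribution from non-}R_j\text{ pairs})$. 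The contribution from non-$R_j$ pairs is where I must be careful — a pair $(\{h,i\},j)$ with $\bm{G}'_{hi,j}$ not $(\eps,d)$-regular can still be dense. This is the main obstacle: I need that most pairs are regular \emph{on average}, which follows from the weak regularity lemma's guarantee that the number of irregular triples $(\{h,i\},j)$ is at most $\eps L^2 M$ (this is part of Lemma~\ref{lm:weakreg}, which I invoke as a black box from the appendix); combined with a Markov-type argument, for all but a $d^{1/4}$-fraction of colour clusters $j$, and for all but a $d^{1/4}$-fraction of vertex clusters $i$ given such $j$, the pair $(\{h,i\},j)$ is regular for all but at most $\sqrt{\eps}\,L$ values of $h$. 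For those good $(i,j)$, the non-regular pairs contribute at most $\sqrt{\eps}\,L\cdot m^2|\Cols_j|\le(\gG/8)m^2 L|\Cols_j|$ to the mass, so a bad-for-$j$ cluster among these has mass at most $(p+\gG/2)m^2|\Cols_j|+(\gG/8)m^2L|\Cols_j|<(p+3\gG/4)m^2L|\Cols_j|\le(p+3\gG/4)m|\Cols_j|\,n$ (using $mL\le n$), contradicting the lower bound — hence no such cluster is bad. Counting the exceptional fractions gives that at least $(1-d^{1/4})L$ vertices $i$ have $d_{R_j}(i)\ge(p+\gG/2)L$ for the good colour clusters $j$, and the bad colour clusters number at most $d^{1/4}M$; adjusting constants in the hierarchy $\eps\le d\ll\dD,\gG,p$ yields~(i) and~(ii) simultaneously by the symmetry of the $3$-graph $G^{(3)}$ between the role of vertex clusters and colour clusters in the weak regularity partition.

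The main obstacle, to repeat, is controlling the mass sitting on irregular pairs: unlike in the degree form of the ordinary ($2$-graph) regularity lemma where one can pre-clean so that irregular pairs are also empty, here I must use the weak regularity lemma's bound on the \emph{number} of irregular triples and average over $j$ (and then over $i$), absorbing the resulting $\sqrt{\eps}$-fractions into the slack $\gG$ and the $d^{1/4}$-fractions in the statement. Everything else is bookkeeping with the hierarchy $0<1/n\ll\eps\le d\ll\dD,\gG,p\le1$, choosing e.g.\ $3d/\dD^2+\eps+\sqrt{\eps}\le\gG/4$.
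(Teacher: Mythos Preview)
Your proposal has two related problems, both stemming from a misreading of Lemma~\ref{lm:weakregcol}.

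First, your ``main obstacle'' is not an obstacle at all. Property~(v) of Lemma~\ref{lm:weakregcol} is the degree-form statement: for \emph{every} triple $(\{h,i\},j)$, either $G'_c[V_h,V_i]=\emptyset$ for all $c\in\Cols_j$, or the triple is $(\eps,d)$-regular. There are no dense non-regular triples in $\bm{G}'$; they have already been emptied. Consequently, if $hi\notin R_j$ then the triple is not $(\eps,d)$-regular, hence empty, and its contribution to any mass count is zero. You do not need (and cannot invoke, since it is not stated for $\bm{G}'$) a separate bound on the number of irregular triples; the averaging over $j$ and the $\sqrt{\eps}$-bookkeeping are unnecessary.

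Second, and more seriously, your mass lower bound is not justified. You assert
\[
\sum_{c\in\Cols_j}\sum_{v\in V_i}d_{G_c'}(v,V\setminus V_0)\ \ge\ \sum_{c\in\Cols_j}\sum_{v\in V_i}\big(d_{G_c}(v)-(\gG/4)n\big),
\]
but property~(iii) gives only a per-vertex bound summed over \emph{all} colours, or a per-colour bound on the total edge count $e(G'_c)$. Neither controls the loss restricted simultaneously to a single cluster $V_i$ and a single colour cluster $\Cols_j$. In fact the per-colour edge loss $(3d/\dD^2+\eps)n^2$ can exceed $m\cdot n\approx n^2/L$, the trivial upper bound on the degree sum over $V_i$, since $L$ may be as large as $n_0(\eps,\dD,L_0)$ while $d$ is not assumed small relative to $1/n_0$.

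The paper's argument avoids both issues. Fix a single vertex $v\in V_i$ and use~(iii) to bound $\sum_{c\in\Cols}d_{G'_c-V_0}(v)$ from below; a Markov-type count then shows that all but $O(d^{1/3}n)$ colours $c$ satisfy $d_{G'_c-V_0}(v)\ge d_{G_c}(v)-\sqrt{d}\,n\ge(p+\gG/2)n$, so all but $d^{1/4}M$ colour clusters $\Cols_j$ contain at least one such good colour $c_j$. For each such $\Cols_j$, the neighbourhood $N_{G'_{c_j}}(v)$ meets at least $(p+\gG/2)L$ vertex clusters $V_h$, and each such triple $(\{h,i\},j)$ is then non-empty in $\bm{G}'$, hence $(\eps,d)$-regular by~(v), hence $hi\in R_j$. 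The key observation you missed is that a single edge of $G'_{c_j}$ between $V_i$ and $V_h$ already forces $hi\in R_j$.
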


\begin{proof}
To prove~(i), note that for all $v \in V\sm V_0$ we have
$$
\sum_{c \in \Cols}d_{G'_c-V_0}(v) \geq \sum_{c \in \Cols}d_{G_c}(v)-(3d/\dD^2+\eps)n^2-\eps|\Cols|s \geq \sum_{c \in \Cols}d_{G_c}(v)-4dn^2/\dD^2.
$$
Let $\ms{D}_v$ be the collection of colours $c$ in $\Cols\sm\Cols_0$ for which $d_{G'_c-V_0}(v) \geq d_{G_c}(v)-\sqrt{d}n$. Then
$$
\sum_{c \in \Cols}d_{G'_c-V_0}(v) %\leq \sum_{c \in \ms{D}_v \cup \Cols_0}d_{G'_c-V_0}(v) + \sum_{c \in \Cols\sm (\ms{D}_v \cup \Cols_0)}d_{G_c}(v)-|\Cols\sm \ms{D}_v|\sqrt{d}n
\leq \sum_{c \in \Cols}d_{G_c}(v)-|\Cols\sm \ms{D}_v|\sqrt{d}n
$$
and therefore $|\Cols\sm \ms{D}_v| \leq 4dn^2/(\dD^2\sqrt{d}n) \leq d^{1/3}n/2$ by $d\ll \dD$,
so $|\ms{D}_v| \geq |\Cols| -d^{1/3}n/2-\eps n \geq |\Cols|-d^{1/3}n$.
We have $mM \leq |\Cols| \leq mM+\eps n$ and $mL \leq n \leq mL+\eps n$.
Thus the number of clusters $\Cols_j$ containing at least one colour of $\ms{D}_v$ is at least
$$
|\ms{D}_v|/m \geq M-d^{1/3}n/m \geq M-d^{1/3}L/(1-\eps) \geq M-2d^{1/3}M/\dD \geq (1-d^{1/4})M.
$$
Now let $i \in [L]$ and $v \in V_i$.
For each cluster $\Cols_j$ as above, choose an arbitrary colour $c_j \in \Cols_j \cap \ms{D}_v$.
Then the number of clusters $V_h$ containing some $u \in N_{G'_{c_j}}(v)$ is at least
$$
\frac{d_{G_{c_j}}(v)-\sqrt{d}n}{m} \geq \frac{(p+\gG-\sqrt{d})n}{m} \geq (p+\gG/2)L.
$$
But then~Lemma~\ref{lm:weakregcol}(v) implies that $i$ is adjacent to each such $V_h$ in $R_j$.
So for every $i \in [L]$, $d_{R_j}(i) \geq (p+\gG/2)L$ for at least $(1-d^{1/4})M$ colours $j$.
The proof of~(ii) is similar and we omit it.
\end{proof}

\begin{comment}
\begin{lemma}[Rainbow key lemma]
%Can embed any small bounded degree graph into a rainbow regular pair.
Let $0 \ll 1/m \ll \eps \ll \aA \ll d,D \leq 1$ and let $\mc{F} = (\mc{V},C,\bm{G})$ be an $R$-template
with parameters $(n,\eps,d,\dD,D)$. Let $t \leq \aA m$ and let
$H$ be a subgraph of $R(t)$ with maximum degree $\DD$
and let $C'_e\subseteq C_e$ with $|C'_e| \geq t+\eps m$ for every $e \in E(R)$.
Then there is a rainbow embedding of $H$ in $G$ where every edge corresponding to $e \in E(R)$
receives a colour in $C'_e$.
\end{lemma}

\begin{proof}
\ldots
\end{proof}
\end{comment}

\subsection{Templates}

We define the notion of a `template', which is essentially a reduced graph in the transversal setting. We will use these as templates for embedding, in the same way that reduced graphs are used for embedding into a single graph.

\begin{defn}[Templates]
Let $0 < 1/m \leq \eps \leq d,\dD \leq 1$ be parameters and let $r \in \mb{N}$.
Suppose that
\begin{itemize}
\item $R$ is an $r$-vertex graph, with vertex set $[r]$ unless otherwise specified,
\item $\mc{V}=\{V_1,\ldots,V_r\}$ is a set of $r$ disjoint vertex sets with $m \leq |V_j| \leq m/\dD$ for all $j \in [r]$, whose union we denote by $V$,
\item $\Cols=\bigcup_{e \in E(R)}\Cols_e$ is a colour set where $|\Cols_e| \geq \dD m$ for all $e \in E(R)$,
\item $\bm{G}$ is a graph collection with colour set $\Cols$ where for each $c \in \Cols$, the graph $G_c$ is the union of bipartite graphs $G_c^{e}$ where $G_c^{e}$ has parts $V_i,V_j$, over all $e=ij$ for which $c \in \Cols_e$.
For each $e \in E(R)$, let $\bm{G}^e=(G_c^e: c \in \Cols_e)$.
\end{itemize}
We say that $\mc{F}=(\mc{V},\Cols,\bm{G})$ is an \emph{$R$-template with parameters $(m,\eps,d,\dD)$}
if for every $e \in E(R)$, $\bm{G}^e$ is $(\eps,d)$-regular.
If we replace regular with semi-superregular, it is a \emph{semi-super $R$-template};
if we replace regular with superregular, it is a \emph{super $R$-template};
if we replace regular with half-superregular, it is a \emph{half-super $R$-template}.
%Any of the above templates are said to be \emph{balanced} if every $V_i$ has size exactly $m$.
If $\Cols=\bigcup_{e \in E(R)}\Cols_e$ is a partition, we say that the template is \emph{rainbow}.

A \emph{transversal embedding of a graph $H$ inside $\mc{F}$} is a copy of $H$ with vertices in $V$ such that for every edge $e$ there is a distinct $c \in \Cols$ such that $e \in G_c$.
That is, there exist injections $\tau: V(H) \to V$ and $\sS: E(H) \to \Cols$ where $\tau(x)\tau(y) \in G_{\sS(xy)}$ for all $xy \in E(H)$.
\end{defn}

Note that the partition $\Cols=\bigcup_{e \in E(R)}\Cols_e$ is suppressed in the notation.
Given a template $\mc{F}$, we explicitly use the notation in the definition unless otherwise specified.
Observe that for an $R$-template $(\mc{V},\Cols,\bm{G})$ with parameters $(m,\eps,d,\dD)$ and $v(R)=r$,
$$
rm \leq |V| \leq rm/\dD.
$$
Given parameters $\bB,\eps',d',\dD'>0$ with $\bB \leq 1$, $\eps' \geq \eps$, $d' \leq d$ and $\dD' \leq \bB\dD$, any $(m,\eps,d,\dD)$ template is also a $(\bB m,\eps',d',\dD')$ template.
We will often take \emph{subtemplates} of templates, meaning that the new vertex clusters, colour clusters and graphs $G_c$ are subsets/subgraphs of the originals.
Some convenient notation for this is as follows: if $\mc{F} = (\mc{V},\Cols,\bm{G})$ is a template,
$\Cols' \subseteq \Cols$ and $\mc{V}' = \{V_1',\ldots,V_r'\}$ where $V_j' \subseteq V_j$ for all $j \in [r]$, we say that $\mc{F}' = (\mc{V}',\Cols',\bm{G}')$ is the \emph{subtemplate of $\mc{F}$ induced by $\mc{V}',\mc{\Cols'}$} when each $\Cols_e'=\Cols_e \cap \Cols'$, and $(G')^e_c:=G_c[V_i',V_j']$ is defined for each $c \in \Cols'_e$.
We also say that $\mc{F}'$ is \emph{obtained by deleting $\Cols \sm \Cols'$ and $V \sm V'$}.

The following straightforward lemma shows that removing a small fraction of colours and vertices from a template produces a subtemplate with slightly weaker parameters, which remains super if the original template was super.

\begin{lemma}[Template slicing]\label{lm:tempslice}
Let $0 <1/m \ll \eps \ll \eps' \ll \aA \ll d,\dD,1/r,1/k \leq 1$ where $r \geq 2$ is an integer.
Let $R$ be an $r$-vertex graph and
let $\mc{F}=(\mc{V},\Cols,\bm{G})$ be an $R$-template with parameters $(m,\eps,d,\dD)$.
Let $\Cols' \subseteq \Cols$, $V_i' \subseteq V_i$ for all $i \in [r]$
and let $\mc{F}' = (\mc{V}',\Cols',\bm{G}')$ be the subtemplate of $\mc{F}$ induced by $\mc{V}',\Cols'$.
\begin{enumerate}
\item[(i)] If every $|\Cols'_e| \geq \aA |\Cols_e|/k$ and $\aA|V_i| \leq |V_i'| \leq k\aA |V_i|$,
then $\mc{F}'$ is a template with parameters $(\aA m,\eps/\aA,d/2,\dD/k)$.
\item[(ii)] If every $|\Cols_e \sm \Cols'_e| \leq \aA m$ and $|V_i \sm V_i'| \leq \aA m$,
then $\mc{F}'$ is a template with parameters $(m/2,2\eps,d/2,\dD/2)$.
Moreover, if $\mc{F}$ is super, then $\mc{F}'$ is super.
\item[(iii)] Given $\aA|V_i| \leq n_i \leq k\aA|V_i|$ for all $i \in [r]$ and $h_e \geq \aA|\Cols_e|/k$ for all $e \in E(R)$, if $V_i'$ is a uniform random subset of $V_i$ of size $n_i$
and $\Cols'_e$ is a uniform random subset of $\Cols_e$ of size $h_e$ and $\mc{F}$ is super,
then with high probability, $\mc{F}'$ is a super template with parameters $(\aA m,\eps/\aA,d^2/16,\dD/k)$.
\item[(iv)] Suppose $\mc{F}$ is half-super. For all $c \in \Cols$, there is $G_c' \subseteq G_c$ such that, defining $\bm{G}':=(G_c':c \in \Cols)$, the template $(\mc{V},\Cols,\bm{G}')$ is super with parameters $(m,\eps',d^2/2,\dD)$.
\end{enumerate}
\end{lemma}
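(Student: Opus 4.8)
The plan is to prove the four parts essentially independently, each by unwinding the definition of a template (i.e.~the regularity/superregularity of each bipartite collection $\bm{G}^e$) and invoking the appropriate part of the Slicing Lemma (Lemma~\ref{lm:slice}) together with a Chernoff/union bound argument where randomness is involved. Throughout, the key observation is that an $R$-template with parameters $(m,\eps,d,\dD)$ is nothing more than a family, indexed by $e \in E(R)$, of bipartite graph collections $\bm{G}^e=(G^e_c:c\in\Cols_e)$ with $|\Cols_e|\geq\dD m$ and parts $V_i,V_j$ of size between $m$ and $m/\dD$, each of which is $(\eps,d)$-regular (or semi-super/super/half-super, as appropriate); and that taking the subtemplate induced by $\mc{V}',\Cols'$ simply replaces each $\bm{G}^e$ by $(G^e_c[V'_i,V'_j]:c\in\Cols'_e)$. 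So each part reduces to: (a) checking the new size constraints, and (b) applying Lemma~\ref{lm:slice} to each $e\in E(R)$ separately.

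For part~(i): the hypotheses $|\Cols'_e|\geq\aA|\Cols_e|/k$ and $\aA|V_i|\leq|V_i'|\leq k\aA|V_i|$ mean that within each edge $e=ij$ we have $|V'_h|\geq\aA|V_h|$ for $h=i,j$ and $|\Cols'_e|\geq\aA|\Cols_e|$ (after absorbing the harmless factor $1/k$ into $\aA$, or rather noting $\aA/k$ still plays the role of ``$\aA$'' in Lemma~\ref{lm:slice}(i) since $\eps\ll\aA$, and $(\eps/(\aA/k)) = (k\eps/\aA)\leq\eps/\aA$ after renaming). Lemma~\ref{lm:slice}(i) then gives that $\bm{G}^{\prime e}$ is $(\eps/\aA,d/2)$-regular. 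The new lower bound on vertex cluster sizes is $\aA|V_i|\geq\aA m$, the upper bound is $k\aA|V_i|\leq k\aA m/\dD=\aA m/(\dD/k)$, and $|\Cols'_e|\geq\aA|\Cols_e|/k\geq\aA\dD m/k=(\dD/k)(\aA m)$, so all the parameters $(\aA m,\eps/\aA,d/2,\dD/k)$ check out. For part~(ii): $|V_h\sm V'_h|\leq\aA m\leq\aA|V_h|$ gives $|V'_h|\geq(1-\aA)|V_h|$, and likewise $|\Cols'_e|\geq(1-\aA)|\Cols_e|$; Lemma~\ref{lm:slice}(ii) (for the super case) or Lemma~\ref{lm:slice}(i) (for the regular case, noting $(1-\aA)$ plays the role of $\aA$) yields $(2\eps,d/2)$-(super)regularity of each $\bm{G}^{\prime e}$. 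The size bounds are immediate since $|V'_i|\geq(1-\aA)m\geq m/2$ and $|V'_i|\leq|V_i|\leq m/\dD\leq (m/2)/(\dD/2)$ and $|\Cols'_e|\geq(1-\aA)\dD m\geq(\dD/2)(m/2)$. For the ``super'' preservation, this is exactly the content of Lemma~\ref{lm:slice}(ii) applied edgewise. Part~(iii) is the random analogue: apply Lemma~\ref{lm:slice}(iii) to each $\bm{G}^e$ (there are at most $\binom{r}{2}$ of them, a constant), obtaining that with high probability each $\bm{G}^{\prime e}$ is $(\eps/\aA,d^2/16)$-superregular; a union bound over the $O(r^2)$ edges preserves ``with high probability,'' and the size bounds are as in part~(i).

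For part~(iv): this is the template version of Lemma~\ref{lm:hsrtosr} (half-superregular $\Rightarrow$ contains a spanning superregular subgraph). The hypothesis is that each $\bm{G}^e$ is $(\eps,d)$-half-superregular, i.e.~the $3$-graph $(G^e)^{(3)}$ is weakly $(\eps,d)$-half-superregular on parts $V_i,V_j,\Cols_e$ whose sizes are all between $\dD m$ and $m/\dD$ (up to a constant). For each $e$, apply Lemma~\ref{lm:hsrtosr} with $k=3$ and parameters $\eps\ll\eps'\ll d,\dD$ to obtain a spanning subhypergraph of $(G^e)^{(3)}$ that is $(\eps',d^2/2)$-superregular; translating back, this subhypergraph is the $3$-graph of a subcollection $(G^{\prime e}_c:c\in\Cols_e)$ with $G^{\prime e}_c\subseteq G^e_c$, and that subcollection is $(\eps',d^2/2)$-superregular. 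Setting $G'_c:=\bigcup_{e:\,c\in\Cols_e}G^{\prime e}_c$ for each $c\in\Cols$ gives the required $\bm{G}'=(G'_c:c\in\Cols)$, and $(\mc{V},\Cols,\bm{G}')$ is super with parameters $(m,\eps',d^2/2,\dD)$. I do not expect any genuine obstacle here: every part is a routine translation of an already-established bipartite-collection lemma to the edge-indexed template setting, with the only mild bookkeeping being to track how the factors $1/k$, $(1-\aA)$, and the constant number $\binom{r}{2}$ of edges interact with the constant hierarchy $\eps\ll\eps'\ll\aA\ll d,\dD,1/r,1/k$; the hierarchy is chosen precisely so that all the ``$\eps/\aA$ vs.\ $\eps'$'' comparisons go through. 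The only point requiring a modicum of care is part~(iv), where one must be slightly careful that the subgraphs $G^{\prime e}_c$ chosen for different edges $e$ incident to a common colour $c$ are on disjoint vertex-pair sets (which they are, since $G^e_c$ and $G^{e'}_c$ live on different bipartite classes when $e\neq e'$), so that the union $G'_c$ is well-defined and its restriction to each $(V_i,V_j)$ is exactly $G^{\prime e}_c$.
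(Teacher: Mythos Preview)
Your proposal is correct and follows essentially the same approach as the paper: each part is proved by applying the corresponding part of Lemma~\ref{lm:slice} edgewise (together with the size bookkeeping for the new template parameters), and part~(iv) by applying Lemma~\ref{lm:hsrtosr} to each $3$-graph $(G^e)^{(3)}$. The only slip is the inequality $k\eps/\aA\leq\eps/\aA$ in your discussion of part~(i), which goes the wrong way for $k\geq 1$; but the paper's own proof is equally loose here (it simply invokes Lemma~\ref{lm:slice}(i) without tracking the $1/k$), and the hierarchy $\eps\ll\aA\ll 1/k$ absorbs this harmlessly in all applications.
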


\begin{proof}
For~(i), we have $\aA m \leq |V_i'| \leq \aA m/(\dD/k)$
and $|\Cols_e'| \geq \aA\dD m/k$.
Also, by Lemma~\ref{lm:slice}(i), for each $e\in E(R)$, $(\bm{G}')^e$ is $(\eps/\aA,d/2)$-regular. Thus $\mc{F}'$ is a template with parameters $(\aA m,\eps/\aA,d/2,\dD/k)$.

For (ii), showing that $\mc{F}'$ is a template with the given parameters is similar to~(i). Note that if $\mc{F}$ is super, then $(\bm{G}')^e$ is $(2\eps,d/2)$-superregular for each $e\in E(R)$ by Lemma \ref{lm:slice}(ii) and thus $\mc{F}'$ is a super $R$-template with parameters $(m/2,2\eps,d/2,\dD/2)$.

For (iii), if $\mc{F}$ is super, then $\bm{G}^e$ is $(\eps,d)$-superregular for each $e\in E(R)$. Thus by Lemma \ref{lm:slice}(iii), with high probability, $(\bm{G}')^e$ is $(\eps/\aA,d^2/16)$-superregular for each $e\in E(R)$. Therefore, by~(i), $\mc{F}'$ is a super template with parameters $(\aA m,\eps/\aA,d^2/16,\dD/k)$ with high probability.

Part~(iv) follows immediately from Lemma~\ref{lm:hsrtosr}.
\end{proof}

%In the reduced graph notation, if $\bm{B}$ is the reduced graph collection of $\bm{G}$ with vertex clusters $V_1,\ldots,V_r$ and colour clusters $(C_e: e \in E(R))$,
%then each graph in $\bm{B}$ consists of exactly one edge, whose union is $R$.

Edges which lie in many graphs $G_c$ are particularly useful for embedding and thus we define the (simple, uncoloured) \emph{thick graph} consisting of all such edges.

\begin{defn}[Thick graph of a template]
Given $\lL>0$ and an $R$-template $\mc{F}=(\mc{V},\Cols,\bm{G})$, let $T^\lL_{\mc{F}}$ be the simple $2$-graph
with vertex set $V$ such that $xy \in E(T^\lL_{\mc{F}})$ whenever $xy \in G_c$ for at least $\lL|\Cols_{ij}|$ colours $c \in \Cols_{ij}$ where $x \in V_i$, $y \in V_j$ and $ij \in E(R)$. We call $T^\lL_{\mc{F}}$ the \emph{$\lL$-thick graph of $\mc{F}$}.
\end{defn}

The following proposition states that the thick graph of a bipartite semi-super template has a spanning half-superregular subgraph. This is very useful for embedding as one can use the usual blow-up lemma to embed a bounded degree graph into the thick graph, and then greedily assign colours.

\begin{prop}\label{lm:thick}
Let $0<1/m \ll \eps \ll \lL \ll d,\dD,1/r \leq 1$
where $r \geq 2$ is an integer. 
Let $R$ be a graph on $r$ vertices
and let $\mc{F}=(\mc{V},\Cols,\bm{G})$ be a semi-super $R$-template with parameters $(m,\eps,d,\dD)$. Then for all $ij \in E(R)$, $T^\lL_{\mc{F}}[V_i,V_j]$ is  $(\eps,d/2)$-half-superregular. %Moreover, $T^\aA_{\mc{F}}[V_i,V_j]$ contains a spanning subgraph that is $(\eps',d^2/8)$-superregular.
\end{prop}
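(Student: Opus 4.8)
Fix $ij \in E(R)$ and write $V_i, V_j$ for the two parts, $\Cols_{ij}$ for the colour set of the bipartite collection $\bm{G}^{ij}$, and $T := T^\lL_{\mc{F}}[V_i,V_j]$ for the $\lL$-thick graph restricted to this pair. We must verify the two defining conditions of $(\eps,d/2)$-half-superregularity for the $2$-graph $T$: a density-on-subsets bound and a minimum degree bound. The plan is to derive both directly from the $(\eps,d)$-regularity and the semi-superregularity degree bound of $\bm{G}^{ij}$, using a simple double-counting / averaging argument to move between ``total edge count across all colours'' and ``number of thick edges''.

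\medskip

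\textbf{Step 1: the subset density bound.} Let $V_i' \subseteq V_i$, $V_j' \subseteq V_j$ with $|V_i'| \ge \eps |V_i|$, $|V_j'| \ge \eps |V_j|$. We want $e_T(V_i',V_j') \ge (d/2)|V_i'||V_j'|$. Since $\bm{G}^{ij}$ is $(\eps,d)$-regular, applying the regularity condition with $\Cols' = \Cols_{ij}$ gives
$$
\sum_{c \in \Cols_{ij}} e_{G_c}(V_i',V_j') \;\ge\; (d-\eps)\,|\Cols_{ij}|\,|V_i'||V_j'|.
$$
Now split the left-hand sum according to whether a pair $xy \in V_i' \times V_j'$ is thick or not. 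Non-thick pairs contribute fewer than $\lL |\Cols_{ij}|$ to the sum each, while thick pairs contribute at most $|\Cols_{ij}|$ each. Hence
$$
\sum_{c \in \Cols_{ij}} e_{G_c}(V_i',V_j') \;\le\; e_T(V_i',V_j')\,|\Cols_{ij}| \;+\; |V_i'||V_j'|\,\lL|\Cols_{ij}|,
$$
and combining the two displays and dividing by $|\Cols_{ij}|$ yields $e_T(V_i',V_j') \ge (d-\eps-\lL)|V_i'||V_j'| \ge (d/2)|V_i'||V_j'|$, using $\eps \ll \lL \ll d$.

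\medskip

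\textbf{Step 2: the minimum degree bound.} Fix $v \in V_i$ (the case $v \in V_j$ is symmetric). Semi-superregularity gives $\sum_{c \in \Cols_{ij}} d_{G_c}(v) \ge d|V_j||\Cols_{ij}|$ (restricted to the edges of $\bm{G}^{ij}$ at $v$; more precisely one uses the degree bound $\sum_{c}d_{G_c}(v) \ge d|V_{3-i}||\Cols|$ of the semi-super definition, applied to the relevant $\Cols_{ij}$, which is how the template definition feeds into a single part). Again double-count over the neighbours $u \in V_j$ of $v$: a non-thick $u$ contributes at most $\lL|\Cols_{ij}|$ colours, a thick $u$ at most $|\Cols_{ij}|$, so
$$
d|V_j||\Cols_{ij}| \;\le\; \sum_{c \in \Cols_{ij}} d_{G_c}(v) \;\le\; d_T(v)\,|\Cols_{ij}| \;+\; |V_j|\,\lL|\Cols_{ij}|,
$$
whence $d_T(v) \ge (d-\lL)|V_j| \ge (d/2)|V_j|$. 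Since for the $2$-graph $T$ on parts $V_i,V_j$ the half-superregular degree requirement is exactly $d_T(x) \ge (d/2)|V_j|$ for $x \in V_i$ (and symmetrically), this completes the verification. The regularity parameter $\eps$ carries over unchanged since half-superregularity imposes no density-closeness condition, only the lower bounds established in Steps 1--2.

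\medskip

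\textbf{Main obstacle.} There is no serious difficulty here; the only thing to be careful about is bookkeeping between the ``collection'' notation and the per-part degree bounds in the semi-super definition — i.e.\ making sure the degree hypothesis one extracts for the single pair $(V_i,V_j,\Cols_{ij})$ is precisely $\sum_{c\in\Cols_{ij}}d_{G_c}(v) \ge d|V_j||\Cols_{ij}|$, which is exactly what ``semi-super $R$-template'' provides for each edge $ij \in E(R)$. The constant hierarchy $\eps \ll \lL \ll d$ is exactly what makes the error terms $\eps + \lL$ and $\lL$ absorbable into the slack between $d$ and $d/2$.
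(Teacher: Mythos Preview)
Your proof is correct and follows essentially the same double-counting argument as the paper: bound the total coloured edge count from below by regularity (resp.\ semi-superregularity), from above by splitting into thick and non-thick pairs, and rearrange. The only cosmetic difference is that the paper's upper bound uses $\lL(|U_i||U_j|-\ell_{ij})$ rather than $\lL|U_i||U_j|$ for the non-thick contribution, giving $(d-\eps-\lL)/(1-\lL)$ instead of $d-\eps-\lL$, but either bound suffices for $d/2$.
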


\begin{proof}
Let $ij \in E(R)$ and let $U_i\subseteq V_i$ and $U_j\subseteq V_j$ be any two subsets such that $|U_i|\geq \eps |V_i|$ and $|U_j|\geq \eps |V_j|$. Let $\ell_{ij} := e(T^\lL_{\mc{F}}[U_i,U_j])$. By regularity we have $|d_{G^{(3)}}(U_i,U_j,\Cols_{ij})-d_{G^{(3)}}(V_i,V_j,\Cols_{ij})|\leq \eps$ and thus we get
$$(d-\eps)|U_i||U_j||\Cols_{ij}|\leq e_{G^{(3)}}(U_i,U_j,\Cols_{ij}) \leq \ell_{ij}|\Cols_{ij}|+\lL(|U_i||U_j|-\ell_{ij})|\Cols_{ij}|.$$
Therefore, we have $\ell_{ij}/(|U_i||U_j|)\geq (d-\eps-\lL)/(1-\lL)\geq d/2$.
Similarly, for any $x\in V_i$, let $\ell_x := d_{T^\lL_{\mc{F}}}(x,V_j)$. By semi-superregularity, we have $\sum_{c\in \Cols_{ij}}d_{G_c}(x)\geq d|V_j||\Cols_{ij}|$. Then we get $$d|V_j||\Cols_{ij}|\leq \ell_x|\Cols_{ij}|+\lL(|V_j|-\ell_x)|\Cols_{ij}|$$ and thus $\ell_x/|V_j|\geq (d-\lL)/(1-\lL)\geq d/2$. The half-superregularity then follows. %By Lemma \ref{lm:hsrtosr}, $T^\aA_{\mc{F}}[V_i,V_j]$ contains a spanning subgraph that is  $(\eps',d^2/8)$-superregular.
\end{proof}

\section{Embedding lemmas}\label{sec:embed}

In this section we state a series of embedding lemmas which we will combine to prove our transversal blow-up lemma, stated at the end of the section. These lemmas are also useful in their own right in applications.

Each lemma is of the following type: we are given an $R$-template and a bounded degree graph $H$ whose partition matches the template. For a small number of vertices $y$ in $H$ we are given large \emph{target sets} $T_y$ where $y$ must be embedded.
The output of the lemma is a transversal embedding of $H$, which consists of an embedding $\tau$ of vertices and an embedding $\sS$ of colours, so $\tau(x)\tau(y) \in G_{\sS(xy)}$ for all $xy \in E(H)$, and such that $\tau(y) \in T_y$ whenever there is a target set.

\begin{comment}
In Lemma~\ref{lm:partial}, $H$ is a small graph and we in fact embed only part of $H$, while finding large \emph{candidate sets} for the remainder (and a chosen colour for the future edge $\tau(x)\tau(y)$ where $x$ was embedded and $y$ will later be embedded inside its candidate set).

In Lemma~\ref{lm:partialcol}, $H$ is again small, but now there are sets of prescribed colours which must be used in the embedding. This is possible since we assume that there are many more edges of $H$ than the number of prescribed colours, and graph of each prescribed colour is dense.

Theorem~\ref{lm:blowup} is the usual blow-up lemma, which one can think of as applying to a template where all coloured graphs are identical. Now the template is super, but $H$ is allowed to be spanning.

In Lemma~\ref{lm:approx}, we assume the template is semi-super, and $H$ may again be spanning, but consists of many small (but still linear) connected components, and there are many more colours than needed for a rainbow embedding.

Theorem~\ref{th:blowup} is our transversal blow-up lemma, in which the template is super, $H$ is a spanning graph with a separable graph homomorphism into the template, and the number of colours is exactly as required.
\end{comment}

The first such lemma applies to embed a small graph $H$ into a template. In fact we only embed some of $H$ while finding large \emph{candidate sets} for the rest of $H$. This means that we set aside some colours so that, for each unembedded vertex $y$
and each of its embedded neighbours $x$, there is a distinct colour $\sS(xy)$, and a large vertex set so that if the image of $y$ is chosen in this set, then we can extend our embedding to a transversal embedding that uses the specified colours.
That is, for every $z$ in this set we have $\tau(x)z \in G_{\sS(xy)}$.
The proof embeds vertices one by one, at each step fixing the colours that will be used to future neighbours, and is similar to the `partial embedding lemma', an uncoloured version, in~\cite{BST3}.

\begin{lemma}[Embedding lemma with target and candidate sets]\label{lm:partial}
Let $0 < 1/m \ll \eps,\gG \ll \nu' \ll \nu,d,\dD,1/\DD,1/r \leq 1$
where $r \geq 2$ is an integer.
\begin{itemize}
\item Let $R$ be a graph on vertex set $[r]$ such that
$\mc{F} = (\mc{V},\Cols,\bm{G})$ is an $R$-template with parameters $(m,\eps,d,\dD)$.
\item Let $H$ be a graph with
$\DD(H) \leq \DD$ for which there is a graph homomorphism
$\phi:V(H) \to V(R)$ such that $|\phi^{-1}(j)| \leq \gG m$ for all $j \in [r]$,
and suppose there is a partition $V(H) = X \cup Y$ where $E(H[Y])=\emptyset$.
\item For every $w \in V(H)$, suppose
there is a set $T_w \subseteq V_{\phi(w)}$ with $|T_w| \geq \nu m$.
\end{itemize}
\noindent
Then there are injective maps $\tau: X \to V$ and $\sS:E(H) \to \Cols$ such that
\begin{enumerate}[(i)]
%\item $\tau(x) \in V_{\phi(x)}$ for all $x \in V(H)$;
\item $\tau(x) \in T_x$ for all $x \in X$;
\item $\sS(xy) \in \Cols_{\phi(x)\phi(y)}$ for all $xy \in E(H)$, and if $xx' \in E(H[X])$, then $\tau(x)\tau(x') \in E(G_{\sS(xx')})$;
%\item if $xv \in E(H\sm H[Y])$, then
\item for all $y \in Y$ there exists $C_y \subseteq V_{\phi(y)} \sm \tau(X)$
such that $C_y \subseteq \bigcap_{x \in N_H(y) \cap X}N_{G_{\sS(xy)}}(\tau(x)) \cap T_y$ and $|C_y| \geq \nu' m$.
\end{enumerate}
\end{lemma}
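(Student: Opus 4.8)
The plan is to embed the vertices of $X$ one at a time in an arbitrary order $x_1,x_2,\ldots,x_{|X|}$, while maintaining at each step a large \emph{candidate set} for every not-yet-embedded vertex of $V(H)$ (both the remaining $x_i$'s and all $y\in Y$). Concretely, I would maintain the invariant that after step $t$ there are injections $\tau$ (defined on $\{x_1,\ldots,x_t\}$) and $\sS$ (defined on all edges incident to $\{x_1,\ldots,x_t\}$), together with sets $C_w\subseteq V_{\phi(w)}\setminus\tau(\{x_1,\ldots,x_t\})$ for each unembedded $w$, such that $C_w\subseteq T_w$, $C_w\subseteq\bigcap_{x_i\in N_H(w),\,i\le t}N_{G_{\sS(x_iw)}}(\tau(x_i))$, and $|C_w|\ge \nu_t m$, where $\nu=\nu_0>\nu_1>\cdots$ is a decreasing sequence of constants with $\nu_t\gg\nu_{t+1}$ and $\nu_{|X|}\ge \nu'$. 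Initialise $C_w:=T_w$ for all $w$, which has size $\ge\nu m$.

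At step $t+1$ we process $x:=x_{t+1}$. We must (a) choose $\tau(x)\in C_x$, (b) for each not-yet-processed neighbour $w$ of $x$ in $H$ (there are at most $\DD$ of them), choose a colour $\sS(xw)\in\Cols_{\phi(x)\phi(w)}$ that has not been used yet, and update $C_w\leftarrow C_w\cap N_{G_{\sS(xw)}}(\tau(x))$, and (c) for neighbours $x'=x_s$ with $s\le t$ already embedded, choose an unused colour $\sS(xx')\in\Cols_{\phi(x)\phi(x')}$ with $\tau(x)\tau(x')\in E(G_{\sS(xx')})$. The colour-availability is not an issue: only $O(\DD\gamma m)$ colours have been used in total (at most $\DD$ per processed vertex, times $|X|\le r\gamma m$ vertices), and each colour class $\Cols_e$ has size $\ge\dD m\gg\DD\gamma m$, so for every edge we still have $\ge\dD m/2$ unused colours in the relevant $\Cols_e$ to select from. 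The real work is choosing $\tau(x)$ and the colours so that the candidate sets do not shrink too fast.

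For this I would use regularity twice. First, for each already-embedded neighbour $x'$ of $x$: by $(\eps,d)$-regularity of $\bm{G}^{\phi(x)\phi(x')}$ and $|C_x|\ge\nu_t m\ge\eps m$ (ensured by choosing the hierarchy so $\eps\ll\nu'$), at least $(d-\eps)/2$ of the available colours $c$ have $e_{G_c}(C_x, \{\tau(x')\})$ positive; more usefully, applying regularity to $(C_x, \{\tau(x')\}$-neighbourhood)-type sets, all but at most $\eps m$ vertices of $C_x$ have the property that for a $(d-\eps)$-fraction of colours $c\in\Cols_{\phi(x)\phi(x')}$ one has $\tau(x')\in N_{G_c}(\cdot)$. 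Iterating over the $\le\DD$ back-neighbours removes at most $\DD\eps m$ vertices from $C_x$; pick $\tau(x)$ from what remains, and for each back-neighbour $x'$ pick an unused good colour. Second, for each forward-neighbour $w$ of $x$ we must pick an unused colour $\sS(xw)\in\Cols_{\phi(x)\phi(w)}$ so that $|C_w\cap N_{G_{\sS(xw)}}(\tau(x))|$ is large. Here I apply $(\eps,d)$-regularity of $\bm{G}^{\phi(x)\phi(w)}$ to the sets $\{\tau(x)\}$, $C_w$ (size $\ge\nu_t m\ge\eps m$) and the set $\Cols'$ of still-unused colours (size $\ge\dD m/2\ge\eps m$): the average over $c\in\Cols'$ of $d_{G_c}(\tau(x),C_w)$ is at least $(d-\eps)|C_w|$, so some — indeed at least an $(d/2)$-fraction of — unused colours $c$ satisfy $|N_{G_c}(\tau(x))\cap C_w|\ge (d/2)|C_w|\ge (d/2)\nu_t m\ge \nu_{t+1} m$. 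Choose such a colour as $\sS(xw)$ and update $C_w$. This preserves all invariants with $\nu_{t+1}:=(d/2)\nu_t$ (and a small $\DD\eps$ loss absorbed into the hierarchy), so after all $|X|\le r\gamma m$ steps every remaining candidate set has size $\ge (d/2)^{r\gamma m/\ldots}$... which is the one place care is needed: since $|X|$ can be linear in $m$, we cannot afford a multiplicative $d/2$ loss per step.

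\textbf{Resolving the last point.} The fix — standard for the partial embedding lemma — is that a vertex $w$ only has its candidate set shrunk when one of its $\le\DD$ $H$-neighbours is processed, so across the whole procedure $C_w$ is updated at most $\DD$ times (once per neighbour in $X$), not $|X|$ times. Hence it suffices to take $\nu_{t}$ constant in $t$ in bulk and instead track, for each $w$, the number of shrink-operations applied to it: starting from $|T_w|\ge\nu m$, after $\le\DD$ shrinks by factor $d/2$ (plus $\le\DD$ removals of $\le\eps m$ vertices when $w$ itself is the one being embedded, if $w\in X$) we still have $|C_w|\ge (d/2)^\DD\nu m-\DD\eps m\ge \nu' m$, using $\nu'\ll\nu,d,1/\DD$ and $\eps\ll\nu'$. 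This yields the maps $\tau:X\to V$ and $\sS:E(H)\to\Cols$ and the sets $C_y$ ($y\in Y$) as required, establishing (i)--(iii). The one genuine obstacle, as noted, is precisely making sure the candidate-set sizes stay linear despite $|X|$ being potentially $\Theta(m)$; bounding the number of shrinks per vertex by $\DD$ rather than by $|X|$ is what makes the argument go through, and the rest is routine application of regularity together with the counting that keeps enough colours available in each $\Cols_e$.
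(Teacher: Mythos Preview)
Your overall architecture is right --- greedy embedding of $X$ while maintaining candidate sets, with each $C_w$ shrinking at most $\DD$ times --- and matches the paper's. But there is a genuine gap in the key step.

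You write that after choosing $\tau(x)$ you ``apply $(\eps,d)$-regularity of $\bm{G}^{\phi(x)\phi(w)}$ to the sets $\{\tau(x)\}$, $C_w$ and $\Cols'$'' to conclude that the average over $c\in\Cols'$ of $d_{G_c}(\tau(x),C_w)$ is at least $(d-\eps)|C_w|$. Regularity gives no such thing: it only controls densities between sets each of size at least $\eps$ times the corresponding cluster, and $\{\tau(x)\}$ is a singleton. An arbitrary vertex of $C_x$ may well be atypical and have $\sum_{c}d_{G_c}(\cdot,C_w)$ tiny, so after picking $\tau(x)$ you cannot guarantee any good colour exists. (The same error appears in your treatment of back-neighbours, where you apply regularity to $\{\tau(x')\}$.)

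The fix --- and this is exactly what the paper does --- is to reverse the order: \emph{before} choosing $\tau(x)$, delete from $C_x$ every vertex $v$ that fails $\sum_{c\in C_{xy}}|N_{G_c}(v)\cap C_y|\ge (d-\eps)|C_{xy}||C_y|$ for some forward neighbour $y$. By the ``typical vertices'' argument (regularity applied to $(V_{\phi(x)},C_y,C_{xy})$, all three large), at most $\eps|V_{\phi(x)}|$ vertices are lost per neighbour, so $C_x$ stays large and you can pick $\tau(x)$ among the survivors. Now $\tau(x)$ is guaranteed typical, so a simple averaging over $C_{xy}$ produces many colours $c$ with $|N_{G_c}(\tau(x))\cap C_y|\ge (d/2)|C_y|$; pick one of these as $\sS(xy)$. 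A secondary point: since colours for an edge are assigned when its \emph{first} endpoint is processed and $C_x$ is immediately intersected with $N_{G_{\sS(x'x)}}(\tau(x'))$, your step~(c) is redundant --- when $x$ is embedded inside $C_x$ the back-edges are automatically satisfied.
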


\begin{proof}
Fix an ordering of $V(H)$ where all vertices of $X$ come before any vertex of $Y$, and for each $x \in X$, let $N^<(x)$ be the set of neighbours of $x$ in $H$ which appear before $x$ in the ordering, and $N^>(x)$ the set of those which appear after.
At step ($x$) (with $x \in X$), we will choose $\tau(x)$
and $\sS(xy)$ for all $y \in N^>(x)$.
Initially, define candidate sets $C_w:=T_w$ for all $w \in V(H)$ and $C_{xy}:=\Cols_{\phi(x)\phi(y)}$ for all $xy \in E(H)$.
The following comprises step ($x$), which we perform for each $x \in X$ in order.
\begin{enumerate}[label=({$x$,}\arabic*),ref=\arabic*]
\item\label{del1} For all $y \in N^>(x)$, delete all vertices $v \in C_x$ with $\sum_{c \in C_{xy}}|N_{G_c}(v) \cap C_y| < (d-\eps)|C_{xy}||C_y|$.
\item\label{choosetau} Choose $\tau(x) \in C_x$.
\item\label{deltau} For all $u \in V(H)$, delete $\tau(x)$ from $C_u$.
\item\label{choosesig} For each $y \in N^>(x)$ in order:
\begin{enumerate}[label=({$x$,$y$,}\ref{choosesig}.\arabic*),ref=\ref{choosesig}.\arabic*]
\item\label{d1} delete all colours $c \in C_{xy}$ with $|N_{G_c}(\tau(x)) \cap C_y| < d|C_y|/2$,
\item\label{d2} choose $\sS(xy) \in C_{xy}$,
\item\label{d3} for all $uv \in E(H)$, delete $\sS(xy)$ from $C_{uv}$,
\item\label{d4} delete all vertices $v \in C_y$ with $v \notin N_{G_{\sS(xy)}}(\tau(x))$.
\end{enumerate}
\end{enumerate}
We claim that, at the end of the process, $|C_x|,|C_{xy}| \geq \nu'm$ for all $x \in V(H)$ and $xy \in E(H)$.
First, let us see why the claim implies the lemma.
Since candidate sets are never empty and every edge in $H$ is incident to $X$, $\tau: X \to V$ and $\sS:E(H) \to \Cols$ are defined, and by step~(\ref{deltau}) and~(\ref{d3}), they are both injections.
For~(i), we choose $\tau(x)$ from $C_x$ which is always a (non-empty) subset of $T_x$.
For~(ii), we choose $\sS(xy)$ from $C_{xy}$ which is always a (non-empty) subset of $\Cols_{\phi(x)\phi(y)}$.
If $xx' \in E(H)$ where $x'$ appears after $x$ in the ordering, then we choose $\tau(x')$ from $C_{x'}$ and by step~($x$,$x'$,\ref{d4}) we have $\tau(x') \in N_{G_{\sS(xx')}}(\tau(x))$.
For~(iii), given the claim and the fact that $Y$ comes after $X$ in the ordering, it suffices to show that, for all $y \in Y$ and $x \in N^<(y)$, we have $C_y \subseteq N_{G_{\sS(xy)}}(\tau(x))$.
Noting that $x \in X$, this is a consequence of step~($x$,$y$,\ref{d4}).

It remains to prove the claim.
We suppose that it is true up until step ($x$).
The vertex candidate set $C_x$ can only shrink at steps~($x$,\ref{del1}), (\ref{deltau}) and at step ($u$,$x$,\ref{d4}) for $u \in N^<(x)$.
Proposition~\ref{lm:standard} and the fact that, currently, $|C_x| \geq \nu'm > \eps|V_{\phi(x)}|$, imply that at step~($x$,\ref{del1}), at most $\DD\eps|C_x|$ vertices are deleted.
At step~(\ref{deltau}), at most $|X|$ vertices are deleted.
At step~($u$,$x$,\ref{d4}), %for every $u \in N^<(x)$, 
the colour $\sS(ux)$ was chosen from $C_{ux}$ which has $|N_{G_{\sS(ux)}}(\tau(u)) \cap C_x| \geq d|C_x|/2$ due to step~($u$,$x$,\ref{d1}). Thus $C_x$ shrinks by a factor of at most $d/2$ at this step. Therefore
$$
|C_x| \geq ((d/2)^\DD-\DD\eps)|T_x|-|X| \geq ((d/2)^\DD-\DD\eps)\nu - r\gG)m \geq \nu(d/3)^\DD m > \nu'm.
$$
The colour candidate set $C_{xy}$, with $x$ before $y$ in the ordering, can only shrink at steps~($x$,$y$,\ref{d1}) and (\ref{d3}).
At most $e(H)$ colours are lost at step~(\ref{d3}).
At step~($x$,$y$,\ref{d1}), we chose $\tau(x)$ from $C_x$ which, immediately after~($x$,\ref{del1}), satisfies $\sum_{c \in C_{xy}}|N_{G_c}(\tau(x)) \cap C_y| \geq (d-\eps)|C_{xy}||C_y|$. 
Writing $A \subseteq C_{xy}$ for the subset of colours which are not deleted at step~($x$,$y$,\ref{d1}), we have $(d-\eps)|C_{xy}||C_y|\leq |A||C_y|+(d/2)(|C_{xy}|-|A|)|C_y|$ and thus $|A| \geq (d/2-\eps)|C_{xy}|$.
Therefore
$$
|C_{xy}| \geq (d/2-\eps)|\Cols_{\phi(x)\phi(y)}|-e(H) \geq (d/2-\eps)\dD m-\DD r\gG m \geq \dD d m/3  \geq \nu'm.
$$
This completes the proof of the claim, and hence of the lemma.
\end{proof}

The next ingredient is a version of the above lemma where we are embedding a small graph $H$ such that a small fraction of its vertices have target sets, but additionally we now have a very small set of colours which must be used in the embedding (together with any other colours).
These prescribed colours will be used on an induced matching $M$ in $H$. This matching, together with its neighbours, will be embedded greedily and then Lemma~\ref{lm:partial} will apply to extend the embedding to the whole of $H$.

\begin{lemma}[Embedding lemma with targets and prescribed colours]\label{lm:partialcol}
Let $0 < 1/m \ll \eps \ll \aA,\gG \ll \lL_1,\lL_2 \ll \nu,d,\dD,1/\DD,1/r \leq 1$
where $r \geq 2$ is an integer.
\begin{itemize}
\item Let $R$ be a graph on vertex set $[r]$ and let $(\mc{V},\Cols,\bm{G})$ be an $R$-template with parameters $(m,\eps,d,\dD)$.
\item Let $H$ be a graph with $\DD(H) \leq \DD$ for which there is a graph homomorphism
$\phi:V(H) \to V(R)$ such that $|\phi^{-1}(j)| \leq \lL_1 m$ for all $j \in [r]$
and $e(H[\phi^{-1}(i),\phi^{-1}(j)]) \geq \lL_2 m$ for all $ij \in E(R)$.
\item Suppose there is a set $W \subseteq V(H)$ with $|W| \leq \aA m$, such that for all $w \in W$
there is a set $T_w \subseteq V_{\phi(w)}$ with $|T_w| \geq \nu m$.
\item For each $e \in E(R)$, let $\ms{D}_e \subseteq \Cols_e$ be a set of at most $\gG m$ colours
and let $\ms{D}=\bigcup_{e \in E(R)}\ms{D}_e$, and suppose that $e(G_c[V_i,V_j]) \geq d|V_i||V_j|$ for all
$c \in \ms{D}_{ij}$ and $ij \in E(R)$, and $|\Cols_e \sm \ms{D}| \geq d|\Cols_e|$ for all $e \in E(R)$.
\end{itemize}
%Let $R$ be an $L$-vertex graph, let $(\mc{V},\Cols,G)$ be an $R$-template
%with parameters $(m,\eps,d,D)$ and let $H$ be a graph on at most $\eps n$ vertices. % with at least $\eps n$ edges.
%Suppose further that $\DD(H) \leq \DD$ and there is a graph homomorphism
%$\phi:V(H) \to V(R)$ such that $|\phi^{-1}(a)| \leq 2\eps m$ for all $a \in V(R)$,
%and $e(H[\phi^{-1}(a),\phi^{-1}(b)]) \geq \nu m$ for all $ab \in E(R)$.
%Suppose also that
%there is a partition $X \cup Y$ of $V(H)$ and $W \subseteq X$ such that for each $w \in W$,
%there is a set $S_w \subseteq V_{\phi(w)}$ with $|S_w| \geq cm$.
%For each $e \in E(R)$, let $C_e \subseteq \Cols_e$ be a set of at most $\mu n$ colours
%and let $C=\bigcup_{e \in E(R)}C_e$.
Then there are injective maps $\tau: V(H) \to V$ and $\sS:E(H) \to \Cols$ such that
\begin{enumerate}[(i)]
\item $\tau(x) \in V_{\phi(x)}$ for all $x \in V(H)$;
\item $\tau(w) \in T_w$ for all $w \in W$;
\item for all $xx' \in E(H)$ we have $\tau(x)\tau(x') \in E(G_{\sS(xx')})$;
\item $\ms{D} \subseteq \sS(E(H))$.
\end{enumerate}
\end{lemma}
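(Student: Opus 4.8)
The strategy is to first handle the prescribed colours by embedding a carefully chosen sparse subgraph of $H$ greedily, then apply Lemma~\ref{lm:partial} to extend this partial embedding to all of $H$. The key observation is that $|\ms{D}| \leq |E(R)|\gG m = \binom{r}{2}\gG m$ colours need to be placed, which is a tiny fraction of the colour set, and each graph $G_c$ with $c \in \ms{D}$ is dense (density $\geq d$) between the relevant parts. So the plan is: select an induced matching $M$ in $H$ with $|E(M)| = |\ms{D}|$ such that $M$ is spread out across $H$ — precisely, for each $e=ij \in E(R)$ we want exactly $|\ms{D}_e|$ edges of $M$ lying between $\phi^{-1}(i)$ and $\phi^{-1}(j)$, and $M$ should avoid all the vertices in $W$ (to keep target sets free). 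Since $e(H[\phi^{-1}(i),\phi^{-1}(j)]) \geq \lL_2 m$ and $\DD(H) \leq \DD$, a greedy argument picks such a matching: each chosen edge kills at most $2\DD$ other edges, and we only need $\gG m \ll \lL_2 m$ edges per pair, so there is plenty of room, and we can also avoid the $O(\aA m)$ vertices of $W$ and their neighbourhoods since $\aA \ll \lL_2$.

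\textbf{Greedy embedding of the matching and its neighbourhood.} Write $M = \{x_1 y_1, \ldots, x_t y_t\}$ with $t = |\ms{D}|$, and fix a bijection assigning to each matching edge $x_k y_k$ (lying between parts $i,j$) a distinct prescribed colour $c_k \in \ms{D}_{ij}$. Let $N$ be the set of all vertices of $H$ that are in $M$ or adjacent to a vertex of $M$; then $|N| \leq 2\DD t \leq 2\DD \binom{r}{2}\gG m =: \gG' m$ with $\gG' \ll \nu, d$. The plan is to embed $M \cup (H[N])$ — or more precisely all vertices of $N$ together with all edges of $H$ induced on $N$ — greedily, vertex by vertex, maintaining large candidate sets. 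When embedding $x_k$ and then $y_k$: since $e(G_{c_k}[V_i,V_j]) \geq d|V_i||V_j|$, a positive fraction of vertices $v \in V_i$ have $d_{G_{c_k}}(v, V_j) \geq d|V_j|/2$; we pick $\tau(x_k)$ among these (also respecting any target set $T_{x_k}$, any colours of already-embedded edges at $x_k$ via standard regularity deletions using Proposition~\ref{lm:standard}, and avoiding previously used images), and then pick $\tau(y_k)$ in $N_{G_{c_k}}(\tau(x_k), V_j)$ (again intersected with $T_{y_k}$ if applicable and with neighbourhoods forced by other already-embedded/greedily-colourable edges at $y_k$). The other (non-prescribed-colour) edges inside $H[N]$ and between $N$ and the rest are assigned colours greedily later — one assigns a colour for each such edge from $\Cols_e \sm \ms{D}$ (which has size $\geq d|\Cols_e|$), deleting vertices not in the appropriate neighbourhood, exactly as in the proof of Lemma~\ref{lm:partial}. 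The point is that every candidate set shrinks only by bounded factors (powers of $d/2$ for vertex-neighbourhood restrictions, $\eps$-fractions from regularity deletions) and by the negligible additive terms $|N| \leq \gG' m$ and $e(H) \leq \DD r \lL_1 m / 2$; since $\gG', \eps \ll \nu$, all candidate sets stay of size $\geq \nu'' m$ for an appropriate $\nu'' \gg \eps$.

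\textbf{Finishing via Lemma~\ref{lm:partial}.} After this greedy phase we have an injective partial embedding $\tau$ defined on $N$, an injective partial colouring $\sS$ defined on $E(H[N])$ with $\ms{D} \subseteq \sS(E(H[N]))$, and for each vertex $y \in V(H) \sm N$ with a neighbour in $N$, a candidate set $C_y \subseteq V_{\phi(y)}$ of size $\geq \nu'' m$ into which $y$ can be embedded while using the already-fixed colours on edges from $y$ to $N$. Now apply Lemma~\ref{lm:partial} to $H' := H - N$ (with the template restricted by deleting $\tau(N)$ and the colours in $\sS(E(H[N]))$ from each $\Cols_e$ — only $O(\gG' m)$ vertices and colours removed, so by Lemma~\ref{lm:tempslice} it is still a template with slightly worse parameters), where the target set for each $y \in V(H')$ is $T_y$ intersected with the candidate set $C_y$ from the greedy phase (if $y \in W$ or $y$ has a neighbour in $N$), and is $V_{\phi(y)} \sm \tau(N)$ otherwise; one takes $X = V(H')$, $Y = \es$. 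Lemma~\ref{lm:partial} then yields injective $\tau': V(H') \to V$ and $\sS': E(H') \to \Cols$ compatible with these targets; combining $\tau \cup \tau'$ and $\sS \cup \sS'$ — and checking that edges between $N$ and $V(H')$ get a consistent colour, which holds because $\tau'$ respects the candidate sets $C_y$ and hence lands in the required neighbourhoods — gives the desired transversal embedding, with (iv) holding because $\ms{D} \subseteq \sS(E(H[N]))$.

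\textbf{Main obstacle.} The bookkeeping around edges between $N$ and $V(H') = V(H) \sm N$ is the delicate part: such an edge $xy$ with $x \in N$, $y \notin N$ must get its colour assigned (greedily, from a non-prescribed colour in $\Cols_{\phi(x)\phi(y)}$) during the first phase, forcing $y$ into $N_{G_{\sS(xy)}}(\tau(x))$, and this constraint must be carried into the target set handed to Lemma~\ref{lm:partial}. One must verify that after intersecting $T_y$ with all such neighbourhoods (at most $\DD$ of them) and with $V_{\phi(y)} \sm \tau(N)$, the resulting set is still of size $\geq \nu' m$ — this is fine since each neighbourhood restriction costs a factor $\geq d/2$ by the greedy colour-choice rule and there are at most $\DD$ of them, exactly as in Lemma~\ref{lm:partial}'s own analysis, but it requires the hierarchy $\eps, \aA, \gG \ll \lL_1, \lL_2 \ll \nu$ to absorb all additive and multiplicative losses. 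Alternatively, and perhaps more cleanly, one can treat the whole first phase as a single application of (a minor variant of) Lemma~\ref{lm:partial} itself with $X = N$, $Y = V(H) \sm N$, after first pinning down $\tau$ and $\sS$ on the matching $M$ by hand; this reduces the new content to: (a) finding the spread-out induced matching $M$ avoiding $W$, and (b) embedding $M$ greedily using density of the $G_c$, $c \in \ms{D}$ — both routine given the parameter hierarchy.
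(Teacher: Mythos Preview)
Your proposal is correct and follows essentially the same route as the paper: select a matching in $H$ (avoiding $W$, with $|\ms{D}_e|$ edges in each pair $e\in E(R)$) on which to place the prescribed colours, embed it greedily using the density hypothesis $e(G_c[V_i,V_j])\geq d|V_i||V_j|$ for $c\in\ms{D}$ while fixing colours and candidate sets for its neighbours, and then finish via Lemma~\ref{lm:partial} on the reduced template. The paper additionally arranges that distinct matching edges are at pairwise distance at least three in $H$ (not merely that $M$ is induced), so that no vertex outside $V(M)$ has neighbours in two different matching edges---this makes the greedy phase cleaner since each candidate set $C_y$ is restricted during the processing of only one matching edge, though your induced-matching version would also go through with slightly more bookkeeping.
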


\begin{proof}
Let $e_1,\ldots,e_s$ be an arbitrary ordering of $E(R)$.
Let $\ms{D}_{e_i}' := \ms{D}_{e_i} \sm (\ms{D}_{e_1} \cup \ldots \cup \ms{D}_{e_{i-1}})$ for all $i \in [s]$, so that the $\ms{D}_e'$ over $e \in E(R)$ are pairwise disjoint, and their union equals $\ms{D}$.
%For each $ij\in E(R)$, note that $e(H[\phi^{-1}(i),\phi^{-1}(j)])\geq \lambda m$, thus
We claim that for each $e_i=jj' \in E(R)$, we can choose a matching $M_{jj'}\subseteq H[\phi^{-1}(j),\phi^{-1}(j')]$ such that, writing $M := \bigcup_{e \in E(R)}M_e$, we have
\begin{itemize}
\item $V(M) \cap W=\emptyset$ and %the distance in $H \sm M$ between any $x \in V(M_e)$ and $y \in V(M_{e'})$ is at least three for all (not necessarily distinct) $e,e' \in E(R)$.
$M$ is an induced matching in $H$ such that for every $y \in V(H) \sm V(M)$, there is $xx' \in E(M)$ such that $N_H(y,V(M)) \subseteq \{x,x'\}$;
\item $|M_e|=|\ms{D}_e'|$ for each $e\in E(R)$.
\end{itemize}
We can do this greedily, as follows.
Suppose we have found $M_{e_1},\ldots,M_{e_{i-1}}$ with the required properties, for some $i \geq 1$.
We show how to find $M_{e_i}$, where we write $e_i:=jj'$.
Vizing's theorem states that every graph $J$ with maximum degree $\DD$ can be properly edge-coloured with at most $\DD+1$ colours. Thus $J$ contains a matching of size $\lceil e(J)/(\DD+1)\rceil$.
Thus $H[V_j,V_{j'}]$ contains a matching of size $\lceil\lL_2 m/(\DD+1)\rceil$.
Now, from this matching, delete $W$ and any vertex at distance at most two to any vertex in a previously found matching.
There are at most $r-1$ neighbours $h$ of $j$ in $R$ for which we have defined $M_{jh}$, and for each one we delete at most $(1+\DD+\DD^2)v(M_{jh}) \leq 4\DD^2\gG m$ such vertices.
The total number of deleted vertices is at most $|W|+8r\DD^2\gG m$, which leaves a matching $M'$ of size at least $\lL_2 m/(4\DD)$.
Now we will greedily choose a suitable submatching. 
Suppose we have chosen a subset $E(M^q) := \{x_1y_1,\ldots,x_{q-1}y_{q-1}\} \subseteq E(M')$ where $1 \leq q \leq |\ms{D}_{jj'}'|$ and the distance in $H\sm M^q$ between any pair of vertices in $V(M^q)$ is at least three.
To choose $x_qy_q$, we delete all vertices in $V(M')$ at distance in $H\sm M^q$ at most two. The number of deleted vertices is at most $(1+\DD+\DD^2)2q \leq 4\DD^2\gG m < \lL_2 m/(4\DD)$.
Thus we can always choose $x_qy_q$ from the remaining vertices of $M'$.
We claim that $M^{q+1} := M^q\cup\{x_qy_q\}$ is a suitable extension of $M^q$.
Indeed, $x_q,y_q$ have no neighbours in $M^q$ so $M^{q+1}$ is induced, and if $x_q$ shared a neighbour $z$ with some other $y \in M^{q}$, then it would be at distance two from $y$, a contradiction.
Thus we can obtain $M_{e_i}$ and hence $M_{e_1},\ldots,M_{e_s}$ satisfying both required properties.

Let $X := V(M)$ and $Y := N_H(X) \sm X$.
We will embed $M$ into the template first, defining injections $\tau : X \to V$ and $\sS : E(M) \cup E(H[X,Y]) \to \ms{D}$ so that $\sS(M_e)=\ms{D}_e'$ for all $e \in E(R)$.
We want to choose good images for the vertices $x \in X$ so that there are many choices for each neighbour $y \in N_H(x) \sm X$. 

Now we will embed $M$ greedily, %ensuring that each vertex $x \in V(M) \cap \phi^{-1}(j)$ which has a neighbour in $\phi^{-1}(j')$ is embedded into some $Y^c_{jj'}$, where we choose an unused colour $c$ at each step.
%We can do the embedding greedily 
using the fact that $e(G_c) \geq d|V_j||V_{j'}|$ for all $c \in \ms{D}_{jj'}$ and $\gG \ll d$.
For each $p\leq e(M)$, let 
$i \leq s$ be such that $e(M_{e_1})+\ldots e(M_{e_{i-1}}) \leq p < e(M_{e_1})+\ldots + e(M_{e_i})$.
Let $\wt{M}_p := M_{e_1} \cup \ldots \cup M_{e_{i-1}} \cup M'_{e_i}$ where $e(\wt{M}_p)=p$ and
we have fixed an ordering of $E(M_{e_i})$ and $M'_{e_i}$ is the (possibly empty) matching which consists of an initial segment.
Let $X_p := V(\wt{M}_p)$ and $Y_p := N_H(X_p) \sm X_p$.
Suppose we have found injections $\tau :  X_p \to V$ and $\sS: E(\wt{M}_p) \cup E(H[X_p,Y_p]) \to \ms{C}$ such that
\begin{itemize}
\item $\tau(x) \in V_{\phi(x)}$ for all $x \in X_p$;
\item $\tau(x)\tau(y) \in G_{\sS(xy)}$ for all $xy \in E(\wt{M}_p)$;
\item $\sS(E(M_{e_h})) = \ms{D}_{e_h}'$ for all $h < i$ and $\sS(E(M_{e_i}')) \subseteq \ms{D}_{e_i}'$;
\item for all $y \in Y_p$ there exists $C_y \subseteq V_{\phi(y)}$
such that $C_y \subseteq \bigcap_{x \in N_H(y) \cap X_p}N_{G_{\sS(xy)}}(\tau(x))$, and $|C_y| \geq d|V_{\phi(y)}|/6$.
\end{itemize}
We want to extend $\tau$ and $\sS$ by embedding an edge $xx'$ in $M_{e_i} \sm M_{e_i}'$ with a colour $c^* \in \ms{D}_{e_i}'$ 
and choosing colours and candidate sets for its unembedded neighbours. So fix any such $c^*$ and let $\sS(xx') := c^*$.
Let $N(x),N(x')$ be the set of neighbours of $x,x'$ respectively in $H \sm M$.
The first property of $M$ implies that $N(x),N(y),V(M)$ are pairwise disjoint for all $y \in V(M) \sm \{x,x'\}$, and similarly for $x'$.
However, we could have $N(x) \cap N(x') \neq \emptyset$
(for example if $H$ is a union of vertex-disjoint triangles).

First, we will define $\tau(x)$ and colours and candidate sets for every vertex in $N(x)$.
Write $e_i=jj'$ and $U_h := V_h \sm X_p$ for $h=j,j'$.
We have $e(G_{c^*}[U_j,U_{j'}]) \geq d|V_j||V_{j'}| - 2\gG m(|V_j|+|V_{j'}|) \geq d|V_j||V_{j'}|/2$.
Let
$$
Z(x) := \{v \in U_j: d_{G_{c^*}}(v,U_{j'}) \geq d|U_{j'}|/4\}.
$$
A simple counting argument implies that $|Z(x)| \geq d|U_j|/4$.
We will choose $\tau(x) \in Z(x)$ and candidate sets $C_y$ for each $y \in N(x)$, as follows.
Order $N(x)$ as $y_1,\ldots,y_\ell$, where $\ell \leq \DD$, and let $a_1,\ldots,a_\ell$ be such that $\phi(y_i)=a_i$, so $a_i \in [r]$.
Suppose we have found, for $i \geq 0$,
\begin{itemize}
\item $c_i \in \Cols_{ja_i}$
such that $c^*,c_1,\ldots,c_i$ are all distinct and disjoint from $\sS(E(\wt{M}_p))$
and
\item a set $Z_i(x) \subseteq Z(x)$ with $|Z_i(x)| \geq (d/6)^i|Z(x)|$ and $d_{G_{c_h}}(z,V_{a_h}) \geq d|V_{a_h}|/6$ for all $h \in [i]$ and $z \in Z_i(x)$.
\end{itemize}
Now, $\bm{G}^{ja_{i+1}}[Z_i(x),V_{a_{i+1}}]$ is $(\sqrt{\eps},d/2)$-regular by Lemma~\ref{lm:slice}(i) (the slicing lemma),
so Lemma~\ref{lm:standard}(ii) implies there are at least $(1-\sqrt{\eps})|\Cols_{ja_{i+1}}|$ colours $c \in \Cols_{ja_{i+1}}$ with $e(G_c[Z_i(x),V_{a_{i+1}}]) \geq (d/2-\sqrt{\eps})|Z_i(x)||V_{a_{i+1}}| \geq d|Z_i(x)||V_{a_{i+1}}|/3$. Let $c_{i+1}$ be any such colour which does not lie in the set $\{c^*,c_1,\ldots,c_i\} \cup \sS(E(\wt{M}_p))$, which exists since
the number of available colours is at least $|\Cols_{ja_{i+1}} \sm \ms{D}| - 1 - \DD \geq d|\Cols_{ja_{i+1}}|/2 \geq d\dD m/2$.
By a simple counting argument, the number of vertices $z \in Z_i(x)$ with $d_{G_{c_{i+1}}}(z,V_{a_{i+1}}) \geq d|V_{a_{i+1}}|/6$ is at least $d|Z_i(x)|/6$, and we let the set of these vertices be $Z_{i+1}(x)$.
Thus we can complete the iteration and find $Z_\ell(x)$.
Now let $\tau(x)$ be an arbitrary vertex of $Z_\ell(x)$,
and for each $i \in [\ell]$, define the candidate set $C_{y_i} := N_{G_{c_i}}(\tau(x), V_{a_i})$ and $\sS(xy_i):=c_i$.

Next, we will define $\tau(x')$ and colours and candidate sets for every vertex in $N(x')$.
For each $y \in N(x) \cap N(x')$, the candidate set for $y$ will be a subset of $C_y$.
For this, let $Z(x') := N_{G_{c^*}}(\tau(x),U_{j'})$, so $|Z(x')| \geq d|U_{j'}|/4$ since $\tau(x) \in Z(x)$.
Write $N(x')=\{w_1,\ldots,w_k\}$ and let $b_1,\ldots,b_k$ be such that $\phi(w_i)=b_i$.
For each $w_i \in N(x)$, we already defined $C_{w_i}$;
for the other $w_i$, let $C_{w_i} := V_{b_i}$.
Using the fact that $\bm{G}^{j'b_i}[Z(x'),C_{w_i}]$ is $(\sqrt{\eps},d/2)$-regular, exactly the same argument for $N(x')$ as for $N(x)$ implies that we can find
\begin{itemize}
\item for each $i \in [k]$, $c_i' \in \Cols_{j'b_i}$ which are distinct and disjoint from $\{c^*,c_1,\ldots,c_\ell\} \cup \sS(E(\wt{M}_p))$ and
\item $Z_k(x') \subseteq Z(x')$ with $|Z_k(x')| \geq (d/6)^k|Z(x')|$ and $d_{G_{c_i'}}(z,C_{w_i}) \geq d|C_{w_i}|/6$ for all $i \in [k]$ and $z \in Z_k(x')$.
\end{itemize}
We let $\tau(x')$ be an arbitrary vertex of $Z_k(x')$
and for each $i \in [k]$, let $T_{w_i} := N_{G_{c_i'}}(\tau(x'),C_{w_i})$ and $\sS(x'w_i):=c_i'$.
Note that $\tau(x') \in Z(x') \subseteq N_{G_{c^*}}(\tau(x))$ so $\tau(x)\tau(x') \in G_{c^*} = G_{\sS(xx')}$.
For $y_i \in N(x) \sm N(x')$, $C_{y_i}$ has not changed since it was first defined, and we let $T_{y_i} := C_{y_i}$.
We have $|T_y| \geq d^2|V_{\phi(y)}|/36 \geq d^2m/36$ for all $y \in N(x) \cup N(x')$.

Thus we can complete the iteration and embed the whole of $M$.
This completes the required extension of $\tau$ and $\sS$, so we have obtained
$\tau: X \to V$ and $\sS: E(M) \cup E(H[X,Y]) \to \Cols$, with $\sS(E(M))=\ms{D}$.

Finally, we extend the embedding to the whole of $H$ by applying Lemma~\ref{lm:partial}.
Indeed, let $\mc{F}'$ be the subtemplate of $\mc{F}$ induced by $\mc{V}',\mc{\Cols}'$,
where $\mc{V}' = \{V_1',\ldots,V_r'\}$ and $V_i' := V_i \sm \tau(X)$, and $\Cols'_e := \Cols_e \sm (\sS(E(M) \cup E(H[X,Y]))$. Lemma~\ref{lm:tempslice}(ii) implies that $\mc{F}'$ is a template with parameters
$(m/2,2\eps,d/2,\dD/2)$. For each $y \in Y$, 
we have $|T_y \cap V_{\phi(y)}'| \geq d^2|V_{\phi(y)}|/36-|\phi^{-1}(\phi(y))| \geq d^2m/36-\lL_1 m \geq d^2 m/37$.
Similarly for each $w \in W$, we have $|T_w \cap V_{\phi(w)}'| \geq (\nu-\lL_1)m \geq \nu m/2$.
For all vertices $y$ of $H$ for which $T_y$ is not defined, let $T_y := V_{\phi(y)}$.
Thus we can apply Lemma~\ref{lm:partial} with parameters $m/2,2\eps,2\lL_1,\min\{\nu,d^2/37\},d/2,\dD/2$ playing the roles of $m,\eps,\gG,\nu,d,\dD$, target sets $T_y$ and $Y=\emptyset$ (so no candidate sets),
to find the desired embedding.
Most of the required properties are immediate but we justify why~(iii) holds. If $xx' \in E(M)$ then we already showed that $\tau(x)\tau(x') \in G_{\sS(xx')}$.
If $xy \in E(H) \sm E(M)$ where $x \in X$, we have $y \in Y$, and then the choice of $M$ guaranteed that $N_H(y,X) \subseteq \{x,x'\}$ where $xx' \in E(M)$ and, if this neighbourhood equals $\{x\}$, we chose $\tau(y) \in T_y \subseteq N_{G_{\sS(xy)}}(\tau(x))$,
while if it equals $\{x,x'\}$ we chose $\tau(y) \in T_y$ and $T_y \subseteq N_{G_{\sS(xy)}}(\tau(x)) \cap N_{G_{\sS(x'y)}}(\tau(x'))$.
If $xy \in E(H)- X$, then this follows from Lemma~\ref{lm:partial}.
\end{proof}

Next we state the usual blow-up lemma, which we will use in the proof of our transversal blow-up lemma.
Note that the lemma is usually stated in terms of the stronger `superregular' condition rather than `half-superregular', but the same proof applies.
Alternatively, one can apply Lemma~\ref{lm:hsrtosr}.

\begin{theo}[Blow-up lemma~\cite{KSS}]\label{lm:blowup}
Let $0 < 1/m \ll \eps,\aA \ll \nu,d,\dD,1/\DD,1/r \leq 1$.
\begin{itemize}
\item Let $R$ be a graph on vertex set $[r]$.
\item Let $G$ be a graph with vertex classes $V_1,\ldots, V_r$ where $m\leq |V_i| \leq m/\dD$ for all $i \in [r]$,
such that $G[V_i,V_j]$ is $(\eps, d)$-half-superregular whenever $ij \in E(R)$.
\item Let $H$ be a graph with $\DD(H) \leq \DD$ for which there is a graph
homomorphism $\phi: V(H) \to V(R)$
such that $|\phi^{-1}(j)| \leq |V_j|$ for all $j \in [r]$.
\item For each $i \in [r]$, let $U_i \subseteq \phi^{-1}(i)$ with $|U_i| \leq \aA m$
and suppose there is a set $T_x \subseteq V_i$ with $|T_x| \geq \nu m$ for all $x \in U_i$.
\end{itemize}
Then there is an embedding of $H$ inside $G$ such that for every $i \in [r]$, every
$x \in U_i$ is embedded inside $T_x$.
\end{theo}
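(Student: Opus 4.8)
The plan is to deduce this from the original blow-up lemma of Koml\'os, S\'ark\'ozy and Szemer\'edi~\cite{KSS}, whose standard formulation already permits a bounded number of \emph{image restrictions} per cluster and cluster sizes differing by a bounded factor. The only genuine discrepancy is that the hypothesis here is phrased in terms of \emph{half}-superregularity rather than superregularity, and this is bridged by Lemma~\ref{lm:hsrtosr} (indeed, as the remark before the statement notes, one could instead simply rerun the proof of~\cite{KSS} with the half-superregular condition in place of the superregular one).

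First I would introduce an auxiliary parameter $\eps'$ with $\eps \ll \eps' \ll \nu,d,1/\DD,1/r$, and for each $ij \in E(R)$ apply Lemma~\ref{lm:hsrtosr} with $k=2$ to the pair $G[V_i,V_j]$, whose part sizes lie between $\dD m$ and $m/\dD$ and which is $(\eps,d)$-half-superregular. This produces a spanning subgraph $G_{ij}'$ of $G[V_i,V_j]$ that is $(\eps',d^2/2)$-superregular. Letting $G'$ be the spanning subgraph of $G$ with edge set the union of the $E(G_{ij}')$ over $ij \in E(R)$, we get that $G'[V_i,V_j]$ is $(\eps',d^2/2)$-superregular for every $ij \in E(R)$, while $G' \subseteq G$.

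Next I would apply the blow-up lemma of~\cite{KSS} to $G'$ with reduced graph $R$, homomorphism $\phi$, maximum degree $\DD$, and image restrictions given by the sets $T_x$ for $x \in U_i$ (setting $T_x := V_{\phi(x)}$ for all remaining vertices $x$ of $H$). The hypotheses are met: each pair of $G'$ indexed by an edge of $R$ is superregular with $\eps' \ll d^2/2, 1/\DD$; the number of restricted vertices in cluster $i$ is $|U_i| \le \aA m \le \aA|V_i|$ with $\aA$ small; each restriction set satisfies $|T_x| \ge \nu m \ge \nu\dD\,|V_{\phi(x)}|$; and $|\phi^{-1}(j)| \le |V_j|$ for all $j$ (if the inequality is strict there are spare vertices in $V_j$, which the blow-up lemma allows, or one may equivalently pad $H$ with isolated vertices mapped to $j$; likewise the bounded ratio $|V_j|/|V_i| \le 1/\dD^2$ is permitted). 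The lemma then returns an embedding of $H$ into $G' \subseteq G$ respecting all image restrictions, which is precisely the required embedding.

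The only thing to watch is the parameter hierarchy: $\eps'$ must be chosen small enough for both Lemma~\ref{lm:hsrtosr} and the blow-up lemma of~\cite{KSS} (the latter now run with density $d^2/2$ and restriction ratio $\nu\dD$), and $\eps$ small enough in terms of $\eps'$. Since every quantity on the right of the hierarchy is a constant bounded away from $0$, this bookkeeping is routine, and there is no new mathematical content beyond the classical tool.
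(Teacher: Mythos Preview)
Your proposal is correct and matches the paper's treatment. The paper does not give a proof of this statement at all: it is quoted as the classical blow-up lemma of~\cite{KSS}, with the remark that the half-superregular hypothesis can be handled either because ``the same proof applies'' or ``alternatively, one can apply Lemma~\ref{lm:hsrtosr}'' --- and the latter is precisely what you do.
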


%As we saw in Lemma~\ref{lm:partial}, finding a transversal embedding of a small graph in a template is simple, since the thick graph has a spanning superregular subgraph, so one can apply the usual blow-up lemma to this subgraph and then greedily assign colours. Let $H$ be the graph we would like to embed. Since $H$ has a separable homomorphism, it can be partitioned into many consecutive sets $U_1,\ldots,U_t$ with few edges between these sets. Thus we can start by applying the partial embedding lemma to embed this small fraction of edges, and now it remains to embed $t$ small subgraphs whose images match up with the previous embedding.

The final embedding lemma that we prove in this section applies to embed a \emph{spanning graph} $H$ which consists of small components and such that a small fraction of its vertices have target sets. 
The template $\mc{F}$ is required to be semi-super (every vertex has large total degree) and rainbow, since $H$ could have many edges in every pair, and additionally $\mc{F}$ must contain many more colours than required for a transversal embedding.

\begin{lemma}[Embedding lemma with extra colours]\label{lm:approx}
Let $0 < 1/m \ll \eps,\mu,\aA \ll \bB \ll \nu,d,\dD, 1/\DD,1/r \leq 1/2$ where $r$ is an integer.
\begin{itemize}
\item Let $R$ be an $r$-vertex graph and let $\mc{F}=(\mc{V},\Cols,\bm{G})$ be a semi-super rainbow $R$-template with parameters $(m,\eps,d,\dD)$.
\item Let $n:=|V|$ and
suppose that $H$ is an $n$-vertex graph with $\DD(H) \leq \DD$ which is the union of vertex-disjoint components of size at most $\mu n$, and there is a graph homomorphism $\phi$ of $H$ into $R$ such that $|\phi^{-1}(j)| = |V_j|$ for all $j \in [r]$ and $e(H[\phi^{-1}(i),\phi^{-1}(j)])\leq |\Cols_{ij}|-\bB m$ for all $ij \in E(R)$.
\item For each $i \in [r]$, let $U_i \subseteq \phi^{-1}(i)$ with $|U_i| \leq \aA m$ and suppose there is a set $T_x \subseteq V_i$ with $|T_x| \geq \nu m$ for each $x \in U_i$.
\end{itemize}
Then there is a transversal embedding of $H$ inside $\mc{F}$ such that for every $i \in [r]$, every $x \in U_i$ is embedded inside $T_x$.
\end{lemma}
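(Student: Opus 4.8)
\emph{Proof idea.}
The plan is to reduce the statement to two tasks: \textbf{(a)} a spanning, $\phi$-respecting placement of $V(H)$ into $V$, which we obtain from the ordinary blow-up lemma; and \textbf{(b)} a rainbow colouring of the edges, which we obtain from a colour-absorption argument in the spirit of~\cite{MMP}. First some cleaning: delete from $R$ every pair $ij$ with $e(H[\phi^{-1}(i),\phi^{-1}(j)])=0$, as such a pair is never used; and by Lemma~\ref{lm:standard}(ii) discard the at most $\eps|\Cols_{ij}|$ colours $c\in\Cols_{ij}$ with $e(G_c)<(d-\eps)|V_i||V_j|$, so that every remaining colour is dense and each pair still has more than $e(H[\phi^{-1}(i),\phi^{-1}(j)])+\bB m/2$ colours. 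Fix $\lL$ with $\eps\ll\lL\ll d$ and pass to the thick graph $T:=T^{\lL}_{\mc{F}}$, so that by Proposition~\ref{lm:thick} each $T[V_i,V_j]$ ($ij\in E(R)$) is $(\eps,d/2)$-half-superregular. It now suffices to find an injection $\tau:V(H)\to V$ with $\tau(\phi^{-1}(i))=V_i$ for all $i$, which maps each $x\in U_i$ into $T_x$ and each edge of $H[\phi^{-1}(i),\phi^{-1}(j)]$ into $E(T[V_i,V_j])$, together with an injection $\sS:E(H)\to\Cols$ with $\sS(xy)\in\Cols_{\phi(x)\phi(y)}$ and $\tau(x)\tau(y)\in E(G_{\sS(xy)})$ for every edge $xy$.

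For the placement we apply the ordinary blow-up lemma (Theorem~\ref{lm:blowup}) to the graph $G$ on vertex set $V$ with $G[V_i,V_j]:=T[V_i,V_j]$ for $ij\in E(R)$, to the homomorphism $\phi$ (note $|\phi^{-1}(i)|=|V_i|$ is allowed), and to the target sets $T_x$ for $x\in U_i$ (with $|T_x|\ge\nu m$ as needed, and $|U_i|\le\aA m$). This yields $\tau$ as required; in particular every image edge lies in at least $\lL|\Cols_{ij}|$ of the graphs $G_c$ with $c\in\Cols_{ij}$.

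It remains to produce $\sS$. Before running the blow-up lemma, reserve at random, for each $ij\in E(R)$: a set $\ms{F}_{ij}\sub\Cols_{ij}$ of $\lceil\bB m/4\rceil$ \emph{flexible colours}, and an induced matching $M_{ij}\sub E(H[\phi^{-1}(i),\phi^{-1}(j)])$ of size $\Theta(\lL\bB m/\DD)$ of \emph{flexible edges} meeting each component of $H$ in at most two vertices (possible since $\DD(H)\le\DD$; the $O(1)$ pairs with $e(H[\phi^{-1}(i),\phi^{-1}(j)])$ sublinear have so few edges that they can be incorporated into one such step and handled separately). Since $T[V_i,V_j]$ has at most $|V_i||V_j|\le(m/\dD)^2$ edges, a Chernoff bound and a union bound over this polynomially large family let us choose $\ms{F}_{ij}$ so that \emph{every} edge $uv$ of $T[V_i,V_j]$ satisfies $|\{c\in\ms{F}_{ij}:uv\in G_c\}|\ge\Omega(\lL\bB m)$. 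Now colour the non-flexible edges $E(H[\phi^{-1}(i),\phi^{-1}(j)])\sm E(M_{ij})$ rainbow-ly using only colours of $\Cols_{ij}\sm\ms{F}_{ij}$ (the heart of the proof, below), and then colour $M_{ij}$ greedily from $\ms{F}_{ij}$: processing its image edges one at a time, each still lies in $\Omega(\lL\bB m)$ colours of $\ms{F}_{ij}$, fewer than $|E(M_{ij})|=\Theta(\lL\bB m/\DD)$ of which are used, so a fresh colour is available. Unused colours are discarded, which is permitted since $e(H[\phi^{-1}(i),\phi^{-1}(j)])<|\Cols_{ij}|$; and $\sS$ is injective because the $\Cols_{ij}$ are used disjointly over $ij$, and $\ms{F}_{ij}$ disjointly from $\Cols_{ij}\sm\ms{F}_{ij}$ within each pair.

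The main obstacle, and the step where the hypotheses really bite, is colouring the non-flexible edges rainbow-ly within $\Cols_{ij}\sm\ms{F}_{ij}$. Here it is essential that $H$ is a union of small components: one processes the components (or small bunches of them), and uses the slack $\bB m$ together with the flexibility supplied by $\ms{F}_{ij}$ and $M_{ij}$ to run a colour-absorption argument following~\cite{MMP}, ensuring that no colour is over-demanded. The reason a more direct route fails is that even in the thick graph an image edge lies in only a $\lL$-fraction (with $\lL\ll d$) of the colours of its pair; so once $H$ has many edges in a pair, a naive greedy assignment, or a plain Hall/system-of-distinct-representatives argument on the "image-edge versus available-colour" bipartite graph, breaks down, and the absorber matching $M_{ij}$ and flexible colours $\ms{F}_{ij}$ must be prepared in advance so that the colouring can be completed no matter which colours the main phase consumes.
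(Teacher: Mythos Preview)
Your high-level plan—one global application of the blow-up lemma to the thick graph, followed by a separate colouring phase—has a genuine gap at the step you yourself flag as ``the heart of the proof''. After a single global embedding, each image edge $\tau(x)\tau(y)$ with $\phi(x)\phi(y)=ij$ comes equipped with a \emph{fixed} set $A_{xy}\subseteq\Cols_{ij}$ of size $\ge\lL|\Cols_{ij}|$. Nothing prevents these sets from being highly correlated; the bipartite ``image-edges versus colours'' graph has about $|\Cols_{ij}|$ vertices on each side but only minimum degree $\lL|\Cols_{ij}|$ on the edge side, and you have no lower bound on degrees on the colour side (a particular $G_c$ might contain none of the image edges). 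Your proposed remedy, ``process the components one by one'', does not help once the embedding is frozen: when you reach a late component, its edges may find every colour in their sets $A_{xy}$ already consumed. And your absorbers $M_{ij},\ms{F}_{ij}$ are far too small to fix this: they handle $\Theta(\lL\bB m)$ edges and colours, whereas the main phase must rainbow-colour about $|\Cols_{ij}|\ge\dD m$ edges. Lemma~\ref{lm:mmp} applies only when the set to be matched is a small fraction of the colour set, so it cannot be invoked for the bulk colouring.

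The paper's proof avoids this by \emph{interleaving} embedding and colouring. It groups the components of $H$ into blocks $B^1,\ldots,B^s,B^0$ with $|B^i\cap\phi^{-1}(j)|=\Theta(\gG m)$ (and $B^0$ of size $\Theta(\mu' m)$), takes a matching random partition of each $V_j$, and sets aside a small random colour reserve $\Cols^0_e$. For each block in turn it deletes the few atypical vertices (Lemma~\ref{lm:standard}) so that the induced subtemplate is again semi-super, recomputes the thick graph \emph{of that subtemplate with the currently unused colours}, applies Theorem~\ref{lm:blowup} to embed that block, and then colours it greedily. The point is that because the thick graph is recomputed, every image edge of the current block lies in at least $\lL\cdot\bB m/2$ of the \emph{currently remaining} colours, and since the block has only $O(\gG m)$ edges with $\gG\ll\lL\bB$, the greedy assignment succeeds. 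The final block $B^0$ is handled with the reserved colours $\Cols^0_e$. No colour absorption is used in this lemma at all.
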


\begin{proof}
Choose new parameters $\mu',\gG,\zZ,\lL$ satisfying $\eps,\mu,\aA \ll \mu' \ll \gG \ll \zZ \ll \lL \ll \bB$.
%such that Lemma~\ref{lm:thick} holds with $\sqrt{\eps},\eps',d/16$ playing the roles of $\eps,\eps',d$,
%which we may do by decreasing $\eps$ if necessary.
Note that $n$ and $m$ are similar in size, since
\begin{equation}\label{eq:mn}
rm \leq n \leq rm/\dD,
\end{equation}
so in particular $\dD n/r \leq m \leq |V_j| \leq m/\dD \leq n/(r\dD)$ for each $j$.

We have that $H$ is the vertex-disjoint union of connected components $H_1,\ldots,H_t$ each of size at most $\mu n$.
%Let $f$ be the function given by the existence of the separable graph homomorphism, so there are $t \in \mb{N}$ and $f:V(H) \to [t] \times [r]$ such that $|f^{-1}(h,j)| \leq \mu n$ for all $h \in [t]$, $j \in [r]$; $f(u)f(v) \in E(tR)$ for all $uv \in E(H)$; and $\phi(u)=f_2(u)$ for all $u \in V(H)$.
%By adding isolated vertices if necessary, we may assume that $|\phi^{-1}(j)|=|V_j|$ for all $j \in [r]$.
%First we partition the set of colours into those that will be used for connecting
%and those for each step of the embedding.
%For this, for each $e \in E(R)$, choose an arbitrary subset $\Cols^0_e \subseteq \Cols_e$
%of size $\bB|\Cols_e|/2 \geq \bBm/(2\DD)$. These colours will be used for connecting.
We will partition each $\phi^{-1}(j)$ into $s$ parts of size roughly $\gG m$ and one part of size roughly $\mu' m$ that respect components.
For each $h \in [t]$ and
$j \in [r]$, let $A_{hj} := V(H_h) \cap \phi^{-1}(j)$ and $a_{hj}:=|A_{hj}|$.
Let $t^*$ be the largest integer such that $b_{0j}:=|B_{0j}| \geq \mu'm$ for all $j \in [r]$, where
$B_{0j} := A_{t^*+1,j} \cup \ldots \cup A_{tj}$.
Obtain $s \in \mb{N}$ and $B_{ij} \subseteq \phi^{-1}(j)$ for $i \in [s]$, $j \in [r]$ iteratively as follows.
Let $\ell_0:=0$ and $a_{0j}:=0$ for all $j \in [r]$ and do the following for $i \geq 1$.
If
$$
|\phi^{-1}(j)|-(a_{1j}+\ldots + a_{\ell_{i-1}j})>q:=2(\DD+1)^{r-1}\gG m
$$
for all $j \in [r]$, let $B_{ij}:= A_{\ell_{i-1}+1,j}\cup\ldots \cup A_{\ell_ij}$ where $\ell_i$ is the smallest integer so that $b_{ij} := |B_{ij}|$ is at least $\gG m$ for all $j \in [r]$.
Otherwise, set $s:= i$, let $B_{sj}:=A_{\ell_{s-1}+1,j}\cup\ldots\cup A_{t^*j}$, and let $b_{sj}:=|B_{sj}|$.
This process always terminates since we remove at least $r\gamma m$ vertices each time, so defines a partition $V_j = B_{0j} \cup B_{1j}\cup\ldots \cup B_{sj}$ for each $j \in [r]$. We have
$$
b_{0j} + b_{1j}+\ldots+b_{sj}=|\phi^{-1}(j)| = |V_j|.
$$
We claim that
\begin{align}\label{eq:bij}
\nonumber \mu' m \leq b_{0j} &\leq 2(\DD+1)^{r-1}\mu' m
\quad\text{for all }j \in [r],\\
\nonumber \gG m \leq b_{ij} &\leq 2(\DD+1)^{2r-2}\gG m\quad\text{for all }i \in [s],j \in [r]\quad\text{and}\\
s &\leq 1/(\dD\gG).
\end{align}

To prove the claim, we first note the following.
Since every $H_h$ is a connected graph with $\DD(H_h) \leq \DD$,
\begin{equation}\label{eq:sizes}
|A_{hj}| \leq |V(H_h)| \leq (\DD+1)^{r-1}|A_{hj'}|\quad
\text{for every }j,j' \in [r] \text{ and }h \in [t].
\end{equation}
(The second inequality can be proved by induction of the number of parts of $H_h$.)

By construction, for every $j \in [r]$ we have 
$b_{0j} \geq \mu'm$.
There at least one $j' \in [r]$ for which $|B_{0j'}| \leq \mu'm+\mu n \leq 2\mu'm$, otherwise we would have chosen a larger $t^*$, and so
the required upper bound on every $b_{0j}$ follows from~(\ref{eq:sizes}).

For the second part, suppose first that $i \in [s-1]$.
Then every $b_{ij} \geq \gG m$ by construction.
Also there is some $j' \in [r]$ for which $b_{ij'} \leq \gG m + \mu n \leq 3\gG m/2$, 
otherwise $\ell_i$ would be smaller.
Thus $b_{ij} \leq 3(\DD+1)^{r-1}\gG m/2$ for all $j \in [r]$ by~(\ref{eq:sizes}).

Now consider $s=i$.
There is at least one $j^* \in [r]$ which caused the partitioning to stop
during the $s$-th step due to
$$
q \geq |\phi^{-1}(j^*)|-(a_{1j^*}+\ldots+a_{\ell_{s-1}j^*}) = |\phi^{-1}(j^*)|-(b_{1j^*}+\ldots+b_{(s-1)j^*})=b_{0j^*}+b_{sj^*}.
$$
%Since $\gG n \leq b_{(s-1)j} \leq (\gG+\mu)n$ and $b_{0j} \leq \mu n$, we have
%$3\gG n/2 < b_{(s-1)j}+b_{sj} < 3\gG n$.
Since the partitioning did not stop during step $s-1$, we similarly have
$b_{0j^*}+b_{(s-1)j^*}+b_{sj^*}>q$.
Thus $b_{sj^*} \geq q-2(\DD+1)^{r-1}\mu' m -3(\DD+1)^{r-1}\gG m/2 \geq \gG m$.
Altogether, $\gG m \leq b_{sj^*} \leq q$.

For any $j \in [r]$ which did not cause the partitioning to stop, we have
$b_{0j}+b_{sj} > q$ and hence $b_{sj} >q-2(\DD+1)^{r-1}\mu' m \geq \gG m$.
Moreover,~(\ref{eq:sizes}) implies that
$$
b_{0j}+b_{sj}=a_{\ell_{s-1}+1,j}+\ldots +a_{tj} \leq (\DD+1)^{r-1}(a_{\ell_{s-1}+1,j^*}+\ldots +a_{tj^*}) = (\DD+1)^{r-1}(b_{0j^*}+b_{sj^*}) \leq (\DD+1)^{r-1}q.
$$
Thus $\gG m \leq b_{sj} \leq (\DD+1)^{r-1}q$.

Thus for every $j \in [r]$ we have $\gG m \leq b_{sj} \leq (\DD+1)^{r-1}q$, and hence
$\gG m \leq b_{ij} \leq (\DD+1)^{r-1}q$ for all $i \in [s]$ and $j \in [r]$. %, and $b_{0j} + b_{1j}+\ldots+b_{sj}=|V_j|$ for all $j \in [r]$.
For the third part, the lower bound implies that $s\gG m \leq |V_j| \leq m/\dD$ and so $d \leq \dD/\gG$,
completing the proof of the claim that~(\ref{eq:bij}) holds.
Another related estimate that will be useful later is
\begin{equation}\label{eq:b0j}
b_{0j}-sr\eps^{1/3} m > \mu' m-\left(\frac{r|V_j|}{\gG m}\eps^{1/3} m\right) > \mu' m - \frac{r\eps^{1/3}m}{\gG} \geq \mu' m/2.
\end{equation}

Let $B^i := \bigcup_{j \in [r]}B_{ij}$.
We have
\begin{align}\label{eq:HBi}
e(H[B^i]) &\leq \DD|B^i| \leq 2(\DD+1)^{2r-1}r\gG m\quad\text{ for all }i \in [s]\quad
\text{ and}\quad\\
\nonumber e(H[B^0]) &\leq \DD|B^0| \leq 2(\DD+1)^{r} r\mu' m.
\end{align}

In the next part of the proof, we will embed $H[B^1],\ldots,H[B^s]$ in turn.
For this, we first partition each $V_j$ into sets for each stage of the embedding, and find a buffer set of colours which will be used in the final part of the embedding.
Given an edge $xy$ of $G$, write $c(xy) := \{c \in \Cols: xy \in G_c\}$.
For each $jj' \in E(R)$ and $x \in V_j$, define
$
M_{jj'}(x) := \{y \in V_{j'}: |c(xy)| \geq d|\Cols_{jj'}|/2\}
$.
We claim that each such set satisfies
$$
|M_{jj'}(x)| \geq d |V_{j'}|/2.
$$
Indeed,
semi-superregularity implies that% $\sum_{c \in \Cols_{jj'}}d_{G_c}(x) \geq d|V_{j'}||\Cols_{jj'}|$, so
$$
d|V_{j'}||\Cols_{jj'}| \leq \sum_{c \in \Cols_{jj'}}d_{G_c}(x,V_{j'}) = \sum_{y \in V_{j'}}|c(xy)| \leq |M_{jj'}(x)||\Cols_{jj'}| + |V_{j'}|d|\Cols_{jj'}|/2,
$$
as required.

For each $j \in [r]$, let $V^0_j \cup V^1_j \cup \ldots \cup V^s_j$ be a random
partition of $V_j$ into parts %of size %$\frac{b_{1j}}{1-b_{0j}},\ldots, \frac{b_{sj}}{1-b_{0j}}$ respectively.
of size $b_{0j}-sr\eps^{1/3} m, b_{1j}+r\eps^{1/3} m,\ldots,b_{sj}+r\eps^{1/3} m$ respectively.
%We will embed $f^{-1}(i,j) \subseteq V(H)$ inside $V^i_j$.
%We will embed every $u \in V(H) \sm \bigcup_j B_{0j}$ inside $V^{f_1'(u)}_{f_2(u)}$ for each $u$.
For every $i \in [s]$, $j \in [r]$ and $x \in B_{ij}$, we will embed $x$ into $V^i_j$, which has size slightly larger than necessary.
For each $e \in E(R)$, let $\Cols_e^0$ be a uniform random subset of $\Cols_e$ of size $\zZ|\Cols_e|$
and let $\wt{\Cols}_e := \Cols_e \sm \Cols_e^0$.

By a Chernoff bound, using~(\ref{eq:bij}) and~(\ref{eq:b0j}) to see that all parts are sufficiently large,
we may assume that the following hold:
\begin{enumerate}[label=(C\arabic*),ref=(C\arabic*)]
\item\label{parti} for all $jj' \in E(R)$, $x \in V_j$ and $y \in M_{jj'}(x)$, we have $|c(xy) \cap \Cols_{jj'}^0| \geq d|\Cols^0_{jj'}|/4$;
\item\label{partii}  for all $jj' \in E(R)$ and $x \in V_j$ we have $|M_{jj'}(x) \cap V_{j'}^0| \geq d|V^0_{j'}|/4$;
%\item\label{partv}
\item\label{partiii}  for all $0 \leq i \leq s$ and $x \in V(B^i)$ we have $|T_x'| \geq \nu |V^i_{\phi(x)}|/2$, where $T_x' := T_x \cap V_{\phi(x)}^i$.
\end{enumerate}
Indeed, each of these lower bounds is at most half the expectation of the quantity in question.

%A Chernoff bound implies that whp for all $i \in [t]$, $j \in [k]$ and $x \in U_j$, we have $|T_x \cap V^i_j|>c\mu^2 n/2$,
%for all $jj' \in E(R)$ and $v \in V_j$ we have $\sum_{c \in \Cols}d_{G_c}(v) \geq d|V^i_{j'}||\Cols|$,
%and
%$\sum_{c \in \Cols}d_{G_c}(v) \geq d |V_{j'}^i||\Cols|/2$ for all $jj' \in E(R)$, $v \in V_j$ and $i \in [t]$.
%hence
%The Slicing lemma implies that $\bm{G}[V^i_j,V^i_{j'}]$ is $(2\eps,d/2)$-superregular for all $jj' \in E(R)$.

We iteratively construct the transversal embedding of $H[B^1 \cup \ldots \cup B^s]$.
Suppose that we have obtained an embedding $g_{i-1}$ for some $i \geq 1$,
such that
\begin{enumerate}[label=(\roman*)]
\item $g_{i-1}$ is a transversal embedding of $H[W_{i-1}]$, where $W_{i-1} := B^1 \cup \ldots \cup B^{i-1}$;
\item $g_{i-1}(x) \in V^{i'}_j$ for every $x \in B_{i'j} \subseteq W_{i-1}$;
\item for every $jj' \in E(R)$ and every embedded pair $x,y$ of vertices where $xy \in E(H[V_j,V_{j'}])$, the colour of $xy$ is in $\wt{\Cols}_{jj'}$ and these colours are distinct over the embedding; and
\item if $x \in U_j \cap W_{i-1}$, then $g_{i-1}(x) \in T_x$ for every $j \in [r]$.
\end{enumerate}
That is, $g_{i-1}$ consists of injections $\tau:W_{i-1} \to V$ and $\sS: E(H[W_{i-1}]) \to \Cols$
such that $\tau(x)\tau(y) \in G_{\sS(xy)}$, with the stated properties.

Now we will extend $g_{i-1}$ to a transversal embedding $g_{i}$ of $W_{i} := W_{i-1} \cup B^i$ with the analogous properties for $i$.
First observe that the vertices we are about to embed, that is, $B^i = W_{i}\sm W_{i-1}$, have no previously embedded neighbours.
Therefore it suffices to find a transversal embedding of $B^i$ when $B_{ij}$ is embedded into $V^i_j$ for each $j \in [r]$ which uses unused colours.

Obtain $\Cols_e^i$ from $\wt{\Cols}_e$ by deleting any colour we have used in the embedding $g_{i-1}$.
Every colour cluster in the original template contains extra colours,
so for every $jj' \in E(R)$ , we have
\begin{equation}\label{eq:Cijj'}
|\Cols^i_{jj'}| \geq |\Cols_{jj'}| - \sum_{1 \leq \ell \leq i-1}e(H[B_{\ell j},B_{\ell j'}])-\zZ |\Cols_{jj'}| 
\geq (1-\zZ)|\Cols_{jj'}| - e(H[B^i]) \geq \bB m/2.
\end{equation}
For each $j \in [r]$ and $j' \in N_R(j)$, let
$$
V^{i,j',\rm bad}_j := \{v \in V^i_j : \textstyle\sum_{c \in \Cols_{jj'}^i}d_{G_c}(v,V^i_{j'}) < 2d|V^i_{j'}||\Cols_{jj'}^i|/3\}.
$$
Lemma~\ref{lm:slice}(i) implies that $\bm{G}^i_{jj'} := (G_c[V^i_j,V^i_{j'}]: c \in \Cols_{jj'}^i)$ is $(\sqrt{\eps},d/2)$-regular and thus, by Lemma~\ref{lm:standard}(i) and~(\ref{eq:bij}), we have that $|V^{i,j',\rm bad}_j| < \sqrt{\eps} |V^i_j| < \eps^{1/3}m$.
For each $i \in [s]$ and $j \in [r]$, obtain a subset $Y^i_j$ of $V^i_j$ by removing $V^{i,j',\rm bad}_j$ for each $j' \in N_R(j)$ and enough additional arbitrary vertices so that $|Y^i_j|=|V^i_j|-r\eps^{1/3} m=b_{ij}$.
Let $\mc{Y}^i := \{Y^i_1,\ldots,Y^i_r\}$ and let $\Cols^i:= \bigcup_{e \in E(R)}\Cols^i_e$.

We claim that the subtemplate $\mc{F}_i := (\mc{Y}^{i},\Cols^i,\bm{G}^i)$ of $\mc{F}$ induced by $\mc{Y}^i$, $\Cols^i$ is a semi-super rainbow $R$-template with parameters $(\gG m,\sqrt{\eps},d/2,1/(2(\DD+1)^{2r-2}))$.
%Indeed, to see that vertex clusters have suitable sizes, we have $\gG m/2 \leq |Y^i_j|=b_{ij} \leq 2\DD^r\gG m$ by~(\ref{eq:bij}).
That vertex clusters have suitable sizes follows from~(\ref{eq:bij}), and that colour clusters have suitable sizes follows from~(\ref{eq:Cijj'}).
We have seen that $\bm{G}^i_{jj'}$ is $(\sqrt{\eps},d/2)$-regular.
Moreover, for all $jj' \in E(R)$ and $v \in Y^i_j$, since $v \notin V^{i,j',\rm bad}_j$, we have
$$
\sum_{c \in \Cols^i_{jj'}}d_{G_c}(v,Y^i_{j'})
\geq 2d|V^i_{j'}||\Cols^i_{jj'}|/3-\eps^{1/3}m|\Cols^i_{jj'}| > d|V^i_{j'}||\Cols^i_{jj'}|/2,
$$
so $\bm{G}^i_{jj'}$ is $(\sqrt{\eps},d/2)$-semi-superregular.
This completes the proof of the claim.

Therefore, for each $jj' \in E(R)$, we can apply Proposition~\ref{lm:thick} to 
see that the $\lL$-thick graph $T^\lL_{\mc{F}_i}$ is such that $T^i_{jj'} := T^\lL_{\mc{F}_i}[Y^i_j,Y^i_{j'}]$ is $(\sqrt{\eps},d/4)$-half-superregular for all $jj' \in E(R)$.
%obtain a spanning subgraph $J^i_{jj'}$ of $T^\lL_{\mc{F}_i}[V^i_jV^i_{j'}]$ which is $(\eps',d^2/8)$-superregular.
Apply Theorem~\ref{lm:blowup} (the blow-up lemma) with target sets $T_x'$
and parameters $\gG m,\sqrt{\eps},\sqrt{\aA},\nu/2,d/4$ playing the roles of $m,\eps,\aA,\nu,d$
to embed $H[B ^i]$ into $T^i_{jj'}$
such that for every $j \in [r]$, every $x \in U_j \cap B^i$ is mapped to $T_x' \subseteq V^i_j$, which is possible since, by~\ref{partiii}, $|T_x'| \geq \nu\gG m/2$ and $|U_j \cap B^i| \leq \aA m$.
Since for each edge of $T^i_{jj'}$, the number of unused colour graphs it lies in is
$$
\lL|\Cols^i_{jj'}| \stackrel{(\ref{eq:Cijj'})}{>} \lL\bB m/2 > 3(\DD+1)^{2r-1}r\gG m \stackrel{(\ref{eq:HBi})}{>} e(H[B^i]),
$$
we can greedily assign colours to the embedding.
This completes the construction of $g_i$ which satisfies~(i)--(iv).

Thus we can obtain $g_s$ with the required properties.
At the end of this process,
the unembedded part of $H$ is $H[B^0]$,
and the set of vertices of $V_j$ which are not an image of any embedded vertex of $H$
is precisely $Z_j := V^0_j \cup \bigcup_{i \in [s]}(V^i_j \sm Y^i_j)$, which has size $b_{0j}$.
We will use the colours in $\Cols^0:= \bigcup_{e \in E(R)}\Cols^0_e$ to embed each $B_{0j}$ into $Z_j$.

We claim that the subtemplate $\mc{F}_0 := (\mc{Z},\Cols^0,\bm{G}_0)$ of $\mc{F}$ induced by $\mc{Z}:=(Z_j: j \in [r]),\Cols^0$ is a semi-super rainbow template with parameters $(\mu' m,\sqrt{\eps},d^2/16,1/(2(\DD+1)^{r-1}))$.
Indeed,~(\ref{eq:b0j}) implies that vertex clusters have sizes satisfying $\mu' m \leq b_{0j} = |Z_j| \leq 2(\DD+1)^{r-1}\mu' m$.
Colour clusters have size $|\Cols^0_{jj'}| = \zZ|\Cols_{jj'}| \geq \zZ\dD m > \mu' m$.
Lemma~\ref{lm:slice}(i) implies that $\bm{G}^0_{jj'}:=(G_c[V^0_j,V^0_{j'}]: c \in \Cols^0_{jj'})$ is $(\sqrt{\eps},d/2)$-regular for every $jj' \in E(R)$.
%Lemma~\ref{lm:tempslice} implies that it is a rainbow template with parameters $(\sqrt{\eps},d/2)$.
Additionally, for all $x \in Z_j \subseteq V_j$,
\begin{align*}
\sum_{c \in \Cols^0_{jj'}}d_{G_c}(x,Z_{j'}) %\geq \sum_{c \in \Cols^0_{jj'}}d_{G_c}(x,V^0_{j'})
&= \sum_{y \in V^0_{j'}}|c(xy) \cap \Cols^0_{jj'}| \geq \sum_{y \in M_{jj'}(x) \cap V^0_{j'}}|c(xy) \cap \Cols^0_{jj'}| \stackrel{\ref{parti},\ref{partii}}{\geq} d |V^0_{j'}|/4 \cdot d|\Cols^0_{jj'}|/4\\&=d^2|V^0_{j'}||\Cols^0_{jj'}|/16,
\end{align*}
so the template is semi-super.

As before, we can apply Proposition~\ref{lm:thick} to see that the $\lL$-thick graph $T^\lL_{\mc{F}_0}$ is such that $T^0_{jj'} := T^\lL_{\mc{F}^0}[Z^0_j,Z^0_{j'}]$ 
is $(\sqrt{\eps},d/4)$-half-superregular for each $jj' \in E(R)$.
%Comparing vertex cluster sizes to $m$, we have $|Z_j| \in [\mu'r m/3,\mu'rm/\dD]$
We have $\aA m < \sqrt{\aA}|Z_j|$,
and~\ref{partiii} implies that $|T'_x| \geq \nu|V^0_{\phi(x)}|/2 \geq \nu|Z_{\phi(x)}|/3$ for all $x \in V(B^0)$ with a target set.
Thus we can apply Theorem~\ref{lm:blowup} (the blow-up lemma)
with target sets $T_x'$ and parameters $\mu'm,\sqrt{\eps},\sqrt{\aA},\nu/3,d/4$
playing the roles of $m,\eps,\aA,\nu,d$
to embed $H[B^0]$ into $T^0_{jj'}$
such that for every $j\in [r]$, every $x \in U_j$ is mapped to $T_x' \subseteq V^0_j$, which is possible by~\ref{partiii}.
The number of colours on each edge of $T^0_{jj'}$ is at least
$$
\lL|\Cols^0_{jj'}| = \lL\zZ|\Cols_{jj'}| \geq \lL\zZ\dD m > 3(\DD+1)^{r} r\mu' m \stackrel{(\ref{eq:HBi})}{>}e(H[B^0]).
$$
Thus we can again greedily assign colours of $\Cols^0_{jj'}$ to obtain a transversal embedding of $H[B^0]$ using colours untouched by the previous embedding, which completes the transversal embedding of $H$.
\end{proof}

Finally we state a transversal blow-up lemma, which we prove in the next section by combining the embedding lemmas of this section.
Its statement is slightly stronger than Theorem~\ref{th:blowupintro} since the template is super rather than half-super, but the proof of Theorem~\ref{th:blowupintro} is a simple matter of applying Lemma~\ref{lm:tempslice}(iv) (essentially Lemma~\ref{lm:hsrtosr}) first.
The details can be found at the end of the next section.

\begin{theo}[Transversal blow-up lemma]\label{th:blowup}
Let $0 < 1/m \ll \eps,\mu,\aA \ll \nu,d,\dD,1/\DD,1/r \leq 1/2$ where $r$ is an integer.
\begin{itemize}
\item Let $R$ be a simple graph with vertex set $[r]$ and let $\mc{F}=(\mc{V},\Cols,\bm{G})$ be a rainbow super $R$-template with parameters $(m,\eps,d,\dD)$.
%\item Suppose that $H$ is a graph with $\DD(H) \leq \DD$ and bandwidth at most $\mu m$, such that there is a graph homomorphism $\phi: V(H) \to V(R)$ such that $|\phi^{-1}(i)|=|V_i|$ and $e(H[\phi^{-1}(i),\phi^{-1}(j)])=|C_{ij}|$ for all $ij \in E(R)$.
\item Suppose that $H$ is a $\mu$-separable graph with $\DD(H) \leq \DD$ and there is a graph homomorphism $\phi$ of $H$   into $R$ such that $|\phi^{-1}(j)| = |V_j|$ for all $j \in [r]$ and $e(H[\phi^{-1}(i),\phi^{-1}(j)])=|\Cols_{ij}|$ for all $ij \in E(R)$.
\item For each $i \in [r]$, let $U_i \subseteq \phi^{-1}(i)$ with $|U_i| \leq \aA m$ and suppose there is a set $T_x \subseteq V_i$ with $|T_x| \geq \nu m$ for each $x \in U_i$.
\end{itemize}
Then there is a transversal embedding of $H$ inside $\mc{F}$ such that for every $i \in [r]$, every $x \in U_i$ is embedded inside $T_x$.
%Then there is a embedding of $H$ inside $\mc{F}$ such that for every $ij \in E(R)$, the embedding of $H[\phi^{-1}(i),\phi^{-1}(j)]$ is transversal with respect to $\Cols_{ij}$, and for every $i \in [r]$, every $x \in U_i$ is embedded inside $T_x$.
\end{theo}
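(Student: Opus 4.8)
The plan is to follow the usual regularity blow-up strategy, using the embedding lemmas of this section together with the colour-absorption tool of~\cite{MMP}: first deal with the separator of $H$, then reserve a small robust ``absorber'' inside $H$ together with a pool of absorbing colours, then embed the bulk of $H$ with the extra-colour embedding lemma, and finally activate the absorber so that precisely the leftover colours get used. Throughout write $n:=|V|$, so that $rm\le n\le rm/\dD$, and fix auxiliary constants with $\eps\ll\mu,\aA\ll\gG\ll\bB\ll\nu,d,\dD,1/\DD,1/r$.

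\textbf{Reservation.} Using that $H$ is $\mu$-separable, fix $S\subseteq V(H)$ with $|S|\le\mu n$ such that every component of $H-S$ has at most $\mu n$ vertices. For each $ij\in E(R)$, Vizing's theorem applied to $H[\phi^{-1}(i),\phi^{-1}(j)]$ (which has $|\Cols_{ij}|\ge\dD m$ edges and maximum degree at most $\DD$) produces a matching of size $\ge\dD m/(\DD+1)$; arguing greedily exactly as in the proof of Lemma~\ref{lm:partialcol}, extract from these an induced matching $M$ of $H$ with $V(M)\cap S=\es$ having exactly $2\bB m$ edges between $\phi^{-1}(i)$ and $\phi^{-1}(j)$ for every $ij\in E(R)$. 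Also pick, for each $ij\in E(R)$, a uniformly random subset $\ms{A}_{ij}\subseteq\Cols_{ij}$ of size $\bB m$, the \emph{absorbing colours}; since $|\ms{A}_{ij}|$ is a constant fraction of $|\Cols_{ij}|$, Lemma~\ref{lm:slice}(iii) lets us assume the sub-collection on each pair obtained by keeping only the colours of $\ms{A}_{ij}$ is still superregular, and hence by Proposition~\ref{lm:thick} its thick graph is half-superregular.

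\textbf{Embedding the separator.} By Lemma~\ref{lm:tempslice}(ii), delete $\bigcup_{ij}\ms{A}_{ij}$ from $\mc{F}$ to obtain a super template $\mc{F}_1$ on the same vertex clusters; we do not touch these colours again until the final step. Let $J$ be the graph on $S\cup N_H(S)$ whose edges are those of $H$ incident to $S$, so $E(J[N_H(S)\sm S])=\es$. Since $\DD(H)\le\DD$ and $|S|\le\mu n$ we have $|\phi^{-1}(j)\cap V(J)|\le(\DD+1)\mu n\le\gG m$, so Lemma~\ref{lm:partial} applies to $J$ inside $\mc{F}_1$ with $X:=S$, $Y:=N_H(S)\sm S$, and with target sets $T_x\cap V_{\phi(x)}$ for those $x\in S$ lying in $\bigcup_iU_i$. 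This embeds $S$, fixes colours $\sS(e)$ for all $e\in E(J)$, and yields candidate sets $C_y\subseteq V_{\phi(y)}$ with $|C_y|\ge\nu'm$ and $C_y\subseteq\bigcap_{x\in N_J(y)}N_{G_{\sS(xy)}}(\tau(x))$ for all $y\in Y$. Deleting $\tau(S)$ and the used colours produces, by Lemma~\ref{lm:tempslice}(ii), a super template $\mc{F}_2$ with clusters $V_j'$; since $|\tau(S)|$ is tiny we may do the vertex deletion so that $|C_y\cap V'_{\phi(y)}|\ge\nu'm/2$ for $y\in Y$ and $|T_x\cap V'_{\phi(x)}|\ge\nu m/2$ throughout.

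\textbf{The bulk.} Let $H':=(H-S)-E(M)$, i.e.\ the graph on $V(H)\sm S$ with all edges of $H-S$ except those of $M$. It is a union of components of size at most $\mu n$, it spans the clusters of $\mc{F}_2$ (removing $E(M)$ removes no vertices), and the number of its edges between the parts corresponding to $ij\in E(R)$ equals $|\Cols_{ij}|-|E^{ij}_S|-2\bB m$, where $E^{ij}_S$ is the set of $ij$-edges of $H$ incident to $S$. The colours available on pair $ij$ are $\Cols_{ij}\sm\ms{A}_{ij}\sm\sS(E(J))$, of size $|\Cols_{ij}|-\bB m-|E^{ij}_S|$, exceeding the corresponding edge count of $H'$ by exactly $\bB m$. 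Hence Lemma~\ref{lm:approx} applies (with $H'$, $\mc{F}_2$, the candidate sets $C_y$ and the targets $T_x$, and $\bB$ as its slack constant), giving a transversal embedding of $H'$; since $\tau(y)\in C_y$ for each $y\in Y$, the colour $\sS(sy)$ chosen earlier correctly embeds each edge $sy\in E(J)$, so now every part of $H$ except the edges of $M$ is embedded, and for each $ij\in E(R)$ precisely $2\bB m$ colours of $\Cols_{ij}$ remain unused: the $\bB m$ colours of $\ms{A}_{ij}$ and $\bB m$ further colours left over by Lemma~\ref{lm:approx}.

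\textbf{Activating the absorber, and the main obstacle.} It remains to assign to the $2\bB m$ edges of $M$ in each pair $ij$ a system of distinct representatives from the $2\bB m$ leftover colours of $\Cols_{ij}$, compatibly with the placement of $M$ --- equivalently, a perfect matching in the bipartite graph whose parts are $E(M)\cap(i,j)$ and the leftover colours, with $xy$ joined to $c$ when $\tau(x)\tau(y)\in G_c$. This is exactly where the colour-absorption idea of~\cite{MMP} is used, and it is \emph{the main obstacle}: one must have reserved $M$ and the colours $\ms{A}_{ij}$ in a sufficiently coordinated and robust way, and controlled the embedding of $H'$, so that the endpoints of each edge of $M$ land in a pair that is thick with respect to $\ms{A}_{ij}$ (which is plausible since $\ms{A}_{ij}$ is a reserved random set whose thick graph is half-superregular), with the consequence that every leftover colour is compatible with $\Omega(\bB m)$ edges of $M$ and every edge of $M$ with $\Omega(\bB m)$ leftover colours; a standard Hall-type argument then supplies the perfect matching and completes the transversal embedding of $H$. (Finally, Theorem~\ref{th:blowupintro} follows from Theorem~\ref{th:blowup} by first applying Lemma~\ref{lm:tempslice}(iv) to pass from a half-super template to a super one.)
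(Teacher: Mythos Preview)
Your outline follows the same high-level arc as the paper (separator, absorber, bulk, activate), but the absorber you set up cannot be activated, and you essentially admit this when you call it ``the main obstacle'' and then hand-wave a Hall argument.

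There are two concrete failures in the final step. First, to force the endpoints of each $M$-edge into a pair that is thick with respect to $\ms{A}_{ij}$ you would need to give target sets to all $V(M)$-vertices inside Lemma~\ref{lm:approx}. But you have $2\bB m$ matching edges per pair of $R$, hence $\Theta(\bB m)$ matching vertices in each cluster, while Lemma~\ref{lm:approx} only allows $\aA m$ vertices with target sets and your hierarchy has $\aA\ll\bB$. So you cannot control where the $M$-endpoints go. Second, and more seriously, even if every $M$-edge were thick with respect to $\ms{A}_{ij}$, half of the $2\bB m$ leftover colours on each pair are \emph{not} in $\ms{A}_{ij}$: they are the $\bB m$ colours that Lemma~\ref{lm:approx} happened not to use, and you have no control over them whatsoever. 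There is no reason any such colour $c$ should satisfy $\tau(x)\tau(y)\in G_c$ for even a single $M$-edge $xy$, so the Hall condition can fail outright.

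The paper avoids both issues by inverting the order and using the MMP tool (Lemma~\ref{lm:mmp}) properly. The absorber piece $H_{\abs}$ is embedded \emph{first}, into the thick graph via the ordinary blow-up lemma, so that every $H_{\abs}$-edge is automatically compatible with a $\lL_3$-fraction of \emph{all} colours; only then is Lemma~\ref{lm:mmp} invoked on the resulting bipartite edge/colour incidence graph to produce the sets $\ms{A}_e,\ms{B}_e$ with the crucial property that \emph{any} $\ell_e$-subset of $\ms{B}_e$ can be absorbed. The uncontrolled leftover colours from the approximate step are then eliminated by a dedicated intermediate step (Lemma~\ref{lm:partialcol}, embedding $H_{\col}$ with prescribed colours), so that what remains for the absorber is guaranteed to lie inside $\ms{A}_e\cup\ms{B}_e$. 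Your sketch collapses Steps~1, 3 and~5 of the paper into a single ``reserve a matching and some random colours, hope for Hall'' step, and this is precisely where the argument breaks.
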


%When $\mc{F}$ is a rainbow super template, the embedding of $H$ is transversal.

\section{Proof of the transversal blow-up lemma}\label{sec:blowup}

\subsection{Sketch of the proof}

The proof is a combination of the usual blow-up lemma applied to the thick graph, together with the rainbow partial embedding lemmas in the previous section
and the colour absorbing approach pioneered by Montgomery, M\"{u}yesser and Pehova in~\cite{MMP}.

We begin by outlining the steps of the proof in~\cite{MMP}, which is a common sequence of steps for embeddings using absorption, and which we shall also use.
The transversal bandwidth theorem in~\cite{CIKL} also follows this general outline, as well as using the (usual) blow-up lemma as a key tool.
Let $H=H_{\con} \cup H_{\abs} \cup H_{\app} \cup H_{\col} \cup H_{\vx}$ be a suitable partition, which will be carefully chosen.
These parts will be embedded in turn, each time using new colours to extend the transversal embedding.
\begin{itemize}
\item[Step 0.] \emph{Embed a small `connecting graph'}. Embed $H_{\con}$ whose vertex set, as guaranteed by separability, is such that its removal disconnects $H$ into very small components (Lemma~\ref{lm:partial}).
\item[Step 1.] \emph{Find a colour absorber}. Embed $H_{\abs}$ into the edge-coloured graph $\mc{G}$ of $G$,
and find disjoint sets $\ms{A},\ms{B} \subseteq \Cols$ of colours such that, given any $e(H_{\abs})-|\ms{A}|$ colours in $\ms{B}$,
we can choose the colours of the edges of $H_{\abs}$ using exactly those colours and the colours in $\ms{A}$.
\item[Step 2.] \emph{Use most of the colours outside $\ms{A} \cup \ms{B}$}. Embed $H_{\app}$ using most of the colours outside $\ms{A} \cup \ms{B}$ (Lemma~\ref{lm:approx}).
\item[Step 3.] \emph{Use the remaining colours outside $\ms{A} \cup \ms{B}$}. Embed $H_{\col}$ using every unused colour in $\Cols \sm (\ms{A} \cup \ms{B})$ as well as some colours of $\ms{B}$ (Lemma~\ref{lm:partialcol}).
\item[Step 4.] \emph{Embed the remaining vertices of $H$ using colours in $\ms{B}$}. Embed $H_{\vx}$ using colours of $\ms{B}$ (Lemma~\ref{lm:approx}).
\item[Step 5.] \emph{Use the colour absorber}. The colours for $H_{\abs}$ will consist of $\ms{A}$ along with the unused colours in $\ms{B}$.
\end{itemize}
We recall that in~\cite{MMP}, these steps were used to find transversal embeddings of spanning trees, and $F$-factors. When embedding an $F$-factor where $F$ is a small graph, one can embed each copy of $F$ in turn, ensuring that they are disjoint. Similarly, to embed a tree, one can embed small subtrees one by one, ensuring that the single vertex of the current tree that was already embedded matches up.
The new difficulty in our setting (and in~\cite{CIKL}) is that we would like to embed graphs which are more highly-connected.
Nevertheless, they have the property that a small fraction of edges can be removed to produce a graph which consists of small (but still linear) connected components.

One difficulty is that we do not have a minimum degree condition. Every vertex has large total degree and every colour graph is dense, but there could be a small proportion of vertices which do not have any edges of a given colour.

For Step~1, we use
the following lemma from~\cite{MMP} which is their key tool for colour absorption. Here we present a slightly weaker version with modified parameters for simplicity.
\begin{lemma}[\cite{MMP}]\label{lm:mmp}
Let $0 < \lL_1 \ll \lL_2 \ll \lL_3<1$ and let $\ell,m,n$ be integers with $\ell=\lL_1 m$ and $1\leq m \leq \lL_3^2n/8$. Suppose that $G=(U,\Cols)$ is a bipartite graph with $|U|=m$ and $|\Cols|=n$ such that $d(v,\Cols)\geq \lL_3 n$ for each $v\in U$. Then there exist disjoint subsets $\ms{A}, \ms{B} \subseteq \Cols$ with $|\ms{A}|=m-\ell$ and $|\ms{B}|= \lL_2 n$ such that, for every subset $\ms{B}^0$ of $\ms{B}$ with size $\ell$, we can find a perfect matching between $U$ and $\ms{A} \cup \ms{B}^0$.
\end{lemma}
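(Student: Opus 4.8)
The plan is to reduce the robust matching property to a single Hall-type inequality and then construct $\ms{A}$ and $\ms{B}$ to satisfy it; the construction of $\ms{A}$ is where essentially all the work lies. For the reduction: for any $\ell$-element subset $\ms{B}^0\subseteq\ms{B}$ we have $|\ms{A}\cup\ms{B}^0|=(m-\ell)+\ell=m=|U|$, so by Hall's theorem a perfect matching between $U$ and $\ms{A}\cup\ms{B}^0$ exists if and only if $|N_G(S)\cap(\ms{A}\cup\ms{B}^0)|\ge|S|$ for every $S\subseteq U$. For a fixed $S$, an adversary choosing $\ms{B}^0$ can push $|N_G(S)\cap\ms{B}^0|$ down to $\max\{0,\ell-|\ms{B}\setminus N_G(S)|\}$ but no lower, so the conclusion of the lemma is equivalent to the existence of disjoint $\ms{A},\ms{B}\subseteq\Cols$ with $|\ms{A}|=m-\ell$, $|\ms{B}|=\lL_2 n$ and
\[
|N_G(S)\cap\ms{A}|\;+\;\max\bigl\{0,\ \ell-|\ms{B}\setminus N_G(S)|\bigr\}\;\ge\;|S|\qquad\text{for every nonempty }S\subseteq U.
\]
Call this condition $(\star)$; it is all that needs to be arranged.

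For the construction I would first discard colours of little use: a double count of $\sum_{c\in\Cols}d_G(c,U)=\sum_{u\in U}d_G(u)\ge\lL_3 mn$ shows that $\ms{C}:=\{c:d_G(c,U)\ge\lL_1 m\}$ has size at least $(\lL_3-\lL_1)n$, which exceeds $\lL_2 n+m$; a short further argument identifies a sub-family $\ms{C}'\subseteq\ms{C}$ of colours with many $U$-neighbours and a small set $U_{\mathrm{bad}}$ of ``under-connected'' vertices to be handled separately. I would then take $\ms{B}$ to be a suitable $\lL_2 n$-subset of $\ms{C}$ (so each colour of $\ms{B}$ has $\ge\lL_1 m$ neighbours in $U$), put a distinct private neighbour-colour for each vertex of $U_{\mathrm{bad}}$ into $\ms{A}$ (possible greedily, since those vertices still have degree $\ge\lL_3 n$), and fill the rest of $\ms{A}$ with colours $\ms{A}^{\circ}\subseteq\ms{C}'$ forming the colour set of a near-perfect matching of $U\setminus U_{\mathrm{bad}}$ in $G$ that is ``spread'', in the sense that deleting the endpoints of a few of its colours still leaves almost all of $U\setminus U_{\mathrm{bad}}$ dominated by $\ms{A}^{\circ}$. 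One then verifies $(\star)$ by splitting on the size of $N_G(S)$: if $|N_G(S)|$ is not large then $S$ is small and the near-matching inside $\ms{A}^{\circ}$ together with the private colours already yields $|N_G(S)\cap\ms{A}|\ge|S|$; if $|N_G(S)|$ is large then $N_G(S)$ contains all but fewer than $\ell$ colours of $\ms{B}$, so the $\max$-term contributes essentially $\ell$ and it suffices that $N_G(S)$ misses few colours of $\ms{A}$, which is what the spread of the matching provides.

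The main obstacle is precisely producing $\ms{A}$, i.e.\ securing $(\star)$ for \emph{all} of the exponentially many sets $S\subseteq U$ at once. A purely random choice of $\ms{A}$, even within $\ms{C}'$, does not work: for a ``two-block'' bipartite graph where $U=U_1\cup U_2$ and every colour neighbourhood equals $U_1$ or $U_2$, any $\ms{A}$ meeting $N_G(S)$ in at least $|S|$ colours for every $S\subseteq U_1$ and every $S\subseteq U_2$ must be biased in a way coordinated with the choice of $\ms{B}$, so the construction has to be adapted to the structure of $G$. I would handle this either by a short augmenting/exchange argument that upgrades an arbitrary near-perfect matching of $U\setminus U_{\mathrm{bad}}$ to one with the required spread, or by a probabilistic argument in which the union bound is taken only over the ``dangerous'' range of $|N_G(S)|$: there the governing events concern the intersection of a random set with a structured set of size $\ge\lL_3 n$, which concentrates strongly enough to beat $2^{|U|}\le 2^{\lL_3^2 n/8}$, using that $n$ is large (forced by $m\ge1$ together with $\lL_1\ll\lL_2\ll\lL_3$). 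Managing this structural dependence while keeping the parameter hierarchy clean is the delicate point.
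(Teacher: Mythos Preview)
The paper does not prove this lemma: it is quoted from~\cite{MMP} as a black box (``the following lemma from~\cite{MMP} which is their key tool for colour absorption''), so there is no in-paper proof to compare against. Your reduction to the Hall-type condition $(\star)$ is correct and is indeed the natural first step, but from that point on what you have written is a plan rather than a proof: the construction of $\ms{A}$ with the required ``spread'' property is precisely the content of the lemma, and you have not carried out either of the two routes you propose. In particular, the augmenting/exchange argument is not specified at all, and for the probabilistic route you have not identified which random model for $\ms{A}$ you would use nor shown that the tail bounds actually beat $2^{|U|}$ across the full range of $|S|$ --- your own two-block example shows that an unstructured random choice fails, so the argument cannot be a plain Chernoff-plus-union-bound over all $S$. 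If you want to complete this, the standard approach (as in~\cite{MMP}) is a random greedy or template-based construction of $\ms{A}$ together with a careful case split on $|S|$; the point is that one engineers $\ms{A}$ so that small $S$ are handled by a matching-like substructure inside $\ms{A}$, while large $S$ automatically hit most of $\ms{B}$.
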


%We can now prove Theorem~\ref{th:blowup}.

\subsection{Proof of Theorem~\ref{th:blowup}}

\begin{proof}[Proof of Theorem~\ref{th:blowup}]
Without loss of generality, we may assume that $\eps \ll \mu \ll \aA$.
We further define constants $\lL_1,p_{\abs},p_{\col},p_{\vx},\lL_3,\nu'$ so that altogether
$$
0<1/m \ll \eps \ll \mu \ll \aA \ll \lL_1 \ll p_{\abs} \ll p_{\col} \ll p_{\vx} \ll \lL_3 \ll \nu' \ll \nu,d,\dD, 1/\DD,1/r \leq 1/2.
$$
Let $p_{\app}:=1-(p_{\vx}+p_{\abs}+p_{\col})$.
Let $R$ be a graph on vertex set $[r]$, $\mc{F}=(\mc{V},\Cols,\bm{G})$ a rainbow super template with parameters $(m,\eps,d,\dD)$ and $H$ a graph as in the statement. 
%Apply Lemma~\ref{lm:tempslice}(iv) to obtain a 
Let $n := |V|$ be the number of vertices in the template, which equals $v(H)$.
Note that
\begin{gather}
\nonumber r m \leq n \leq rm/\dD, \quad\text{and}\\
\label{eq:vxpart} m \leq |\phi^{-1}(i)| = |V_i| \leq m/\dD \ \text{ for all }i \in [r],
\quad\text{and}\quad
\dD m \leq |\Cols_e| \leq \DD m/\dD \ \text{ for all }e \in E(R),
\end{gather}
where the final assertion follows from the fact that every $|\Cols_{ij}| = e(H[\phi^{-1}(i),\phi^{-1}(j)]) \leq \DD |\phi^{-1}(i)| \leq \DD m/\dD$.

\medskip
\noindent
{\bf Preparation of $H$.}
First we will choose a partition $H=H_{\con}\cup H_{\abs}\cup H_{\app}\cup H_{\vx}\cup H_{\col}$
such that each part $H_\circ$ has roughly a given number $p_\circ n$ of vertices, and moreover
$e(H_\circ[\phi^{-1}(i),\phi^{-1}(j)]) \approx p_\circ e(H[\phi^{-1}(i),\phi^{-1}(j)])$ for all $ij \in E(R)$ and $\circ\in \{\abs,\app,\vx,\col\}$.
Since $H$ is $\mu$-separable, there is a set $X$ of size at most $\mu n$ such that $H-X$
consists of disjoint components $H_1,\ldots, H_t$, each of size at most $\mu n$.
For all $\ell \in [t]$ and $ij \in E(R)$, let $H^\ell_{ij} := H_\ell[\phi^{-1}(i),\phi^{-1}(j)]$
and let $n^\ell_j := |V(H_\ell) \cap \phi^{-1}(j)|$ and $h^\ell_{ij} := e(H^\ell_{ij})$.
Let $H_\con := H[X]$.
%Let $f$ be the function given by the existence of the separable graph homomorphism $\phi$ of $H$, so there are $t \in \mb{N}$, a function $f:V(H) \to [t] \times [r]$ where $f: u \mapsto (f_1(u),f_2(u))$, where $\phi(u)=f_2(u)$, and a set $X \subseteq V(H)$ with $|X| \leq \mu n$ such that $|f^{-1}(h,j)| \leq \mu n$ for all $h \in [t]$, $j \in [r]$; $f_2(u)f_2(v) \in E(R)$ for all $uv \in E(H)$ and $f(u)f(v) \in E(tR)$ for all $uv \in E(H)$ with $u,v \notin X$.
%For all $\ell \in [t]$ and $ij \in E(R)$, let $H^\ell_{ij} := H[f^{-1}(\ell,i) \sm X,f^{-1}(\ell,j)\sm X]$
%and let $n^\ell_j = |f^{-1}(\ell,j) \sm X|$ and $h^\ell_{ij} := e(H^\ell_{ij})$.
%Let $H^\ell := \bigcup_{ij \in E(R)}H^\ell_{ij}$ and
%Let $H_{\rm con} := H[X]$.
Note that %$n \leq t\mu n$, so
\begin{equation}\label{eq:VHcon}
|X|=|V(H_\con)| \leq \mu n \leq r\mu m/\dD \leq \sqrt{\mu}m.
\end{equation}
Independently, for each $\ell\in [t]$, add $H^{\ell}:=\bigcup_{ij \in E(R)}H^\ell_{ij}$ to $H_\circ$ with probability $p_\circ$ for $\circ\in \{\abs, \app, \vx, \col\}$.
Then, for all $i \in [r]$ we have $\mathbb{E}(|V(H_\circ)\cap \phi^{-1}(i)|)=p_\circ|V_i| \pm |X| \geq (p_\circ-\sqrt{\mu}) m$, and for all $ij \in E(R)$ we have
\begin{align*}
\mathbb{E}(e(H_\circ[\phi^{-1}(i),\phi^{-1}(j)]))&=p_\circ\sum_{\ell\in[t]} h^{\ell}_{ij} = p_{\circ}e(H[\phi^{-1}(i),\phi^{-1}(j)])\pm\DD|X| =p_{\circ}|\Cols_{ij}| \pm\DD\sqrt{\mu}m\\
&\geq (p_{\circ}\dD-\DD\sqrt{\mu}) m.
\end{align*}
Now, $t \geq (1-\mu)/\mu \geq 1/(2\mu)$ and $e(H^\ell_{ij}) \leq \DD n^\ell_{i} \leq \DD \mu n$ for all $\ell \in [t]$ and $ij \in E(R)$, 
and each of these expectations is at least $\dD p_{\abs} m/2$, so
a Chernoff bound implies that these values are within a multiplicative factor of $(1\pm\frac{1}{3})$ of their expectations
with probability at least $$
1-4\left(r+\binom{r}{2}\right)\exp\left(\frac{-(\dD p_{\abs} m/2)^2/9}{\frac{1}{2\mu}(\DD\mu n)^2}\right)
\geq 1 - 4r^2\exp\left(\frac{-p_{\abs}^2\dD^4}{18r\DD\mu}\right) \geq \frac{1}{2}.
$$
Thus we may assume that, for all $\circ \in \{\abs,\app,\vx,\col\}$,
\begin{align}
\label{eq:nij} n^\circ_i &:= |V(H_\circ) \cap \phi^{-1}(i)| =(1\pm\tfrac{1}{2})p_\circ|V_i| \in
[p_\circ m/2,3p_\circ m/(2\dD)]\quad\text{for all }i \in V(R);\\
\label{eq:hij} h^{\circ}_{ij} &:= e(H_{\circ}[\phi^{-1}(i),\phi^{-1}(j)]) =
(1\pm\tfrac{1}{2})p_\circ |\Cols_{ij}| \in
[p_{\circ}\dD m/2, 3\DD p_{\circ}m/(2\dD)]\quad\text{for all }ij \in E(R).
\end{align}
We write $H^\circ_{ij} := H_\circ[\phi^{-1}(i),\phi^{-1}(j)]$ for all $ij \in E(R)$. %, and note that $H_\circ$ is the union of the edge-disjoint graphs $H^\circ_{ij}$.
Every vertex $x \in V(H) \sm V(H_\con)$ lies in some $H_\circ$ where $\circ\in \{\abs, \app, \vx, \col\}$, and $x$ can only have neighbours inside $H_\circ$ or $V(H_\con)$.
Note here that for each $\circ \in \{\abs,\app,\col,\vx\}$, $H_\circ$ is the union of vertex-disjoint components of size at most $\mu n \leq (2\mu/p_\circ)|V(H_\circ)| \leq \sqrt{\mu}|V(H_{\circ})|$, by~(\ref{eq:nij}). % implies that it has a $(0,2\mu/p_\circ)$-separable homomorphism.

To find a transversal embedding of $H$, we need to define two injective maps $\tau: V(H)\rightarrow V$ and $\sS: E(H)\rightarrow \Cols$
where $\tau(x)\tau(y) \in G_{\sS(xy)}$ for all $xy \in E(H)$.
For this, we will define $\tau(x)$ for every vertex in $H_\con$, followed by every vertex in $H_\abs$, $H_\app$, $H_\col$, $H_\vx$.
When defining $\tau$ for vertices in $H_\circ$, we simultaneously define $\sS$ for all future incident edges
except for $\circ=\abs$, for which colours are defined at the end.
Whenever we define $\tau(y)$ for some $y$ we always choose $\tau(y) \in V_{\phi(y)}$ and if $y$ has a target set $T_y$, we ensure $\tau(y) \in T_y$. We also always choose $\sS(xy) \in \Cols_{\phi(x)\phi(y)}$.

\medskip
\noindent
{\bf Step 0.} We first embed $H_{\con}$ into $\mc{F}$ by applying Lemma~\ref{lm:partial}, as follows.
Recall that $X=V(H_\con)$, and define $Y := N_H(X) \sm X$.
Let $H'_{\con}$ be the graph with vertex set $X \cup Y$ and edge set $E(H[X \cup Y]) \sm E(H[Y])$.
Then~(\ref{eq:VHcon}) implies that
\begin{equation}\label{eq:V'Hcon}
|V(H'_\con)| \leq (\DD+1)|V(H_\con)| \stackrel{(\ref{eq:VHcon})}{\leq} 2\DD \sqrt{\mu} m\quad\text{and hence}
\quad e(H'_\con) \leq \mu^{1/3}m.
\end{equation}
For each $w \in V(H'_\con)$ where $T_w$ is not defined, let $T_w := V_{\phi(w)}$.
Lemma~\ref{lm:partial} applied with $2\DD\sqrt{\mu}$ playing the role of $\gG$ (and the other parameters the same) implies that there are injective maps $\tau: X \to V$ and $\sS: E(H'_\con) \to \Cols$ with 
$\tau(x)\tau(x') \in G_{\sS(xx')}$ for all $xx' \in E(H_\con)$, and
so that $\tau(x) \in T_x$ for all $x \in X$, and so that there are candidate sets $C_y$ for each $y \in Y$ where $C_y \subseteq V_{\phi(y)} \sm \tau(X)$ such that $C_y \subseteq \bigcap_{x \in N_H(y) \cap X}N_{G_{\sS(xy)}}(\tau(x)) \cap T_y$ and $|C_y| \geq \nu' m$.

We update the target sets, by defining $T^1_y := C_y \subseteq T_y$ for $y \in Y$ where this set is defined, and
$T_y^1 := T_y$ if $y \notin V(H_{\con}')$ and this set is defined. This updates target sets $T_y^1$ for all unembedded vertices $y$, and all target sets have size at least $\nu'm$.
Note that for all $y \in Y$, if we choose $\tau(y) \in T_y^1$, then $\tau(x)\tau(y) \in G_{\sS(xy)}$ for all $x \in N_H(y) \cap X$, so the colour $\sS(xy)$ will be used as desired.
For each $i \in [r]$, let $U^1_i$ be the set of vertices $y$ in $\phi^{-1}(i)$ with a target set $T_y^1$.
We have
\begin{equation}\label{eq:target}
|U^1_i| \leq |U_i|+|Y| \leq \aA m+2\DD\sqrt{\mu} m \leq 2\aA m.
\end{equation}
This set will only shrink during the rest of the proof as vertices with target sets are embedded.
Some target sets of vertices in $U^1_i$ will also shrink, but will remain large enough.

Let $n^{\con}_i := |V(H_\con) \cap \phi^{-1}(i)|$ for all $i \in [r]$
and $h^\con_{ij} := e(H'_\con[\phi^{-1}(i),\phi^{-1}(j)])$ for all $ij \in E(R)$.
We have
\begin{align}
\nonumber |V_i|&=n^\con_i+n^\abs_i+n^\app_i+n^\vx_i+n^\col_i\quad\text{for all }i \in [r]\quad\text{and}\\
|\Cols_{ij}|&=e(H[\phi^{-1}(i),\phi^{-1}(j)])=h^\con_{ij}+h^\abs_{ij}+h^\app_{ij}+h^\vx_{ij}+h^\col_{ij}\quad\text{for all }ij \in E(R).\label{eq:sum}
\end{align}
%Let $X' := V(H_\con) \sm V(H_\abs \cup H_\app \cup H_\col \cup H_\vx)$.
Let $\mathcal{F}' = (\mc{V}',\Cols',\bm{G}')$ be the subtemplate of $\mc{F}$ obtained by deleting the vertices $\tau(X)$ and the colours $\sS(E(H_{\con}'))$ from $\mathcal{F}$, so, defining for all $i \in [r]$ and $e \in E(R)$
\begin{gather*}
V'_i := V_i \sm \tau(X), \quad \Cols'_e := \Cols_e \sm \sS(E(H'_\con)),\\
\text{we have }\quad(1-\mu^{1/4})|\Cols_e| \stackrel{(\ref{eq:vxpart}),(\ref{eq:V'Hcon})}{\leq} |\Cols_e'|\stackrel{(\ref{eq:sum})}{=}h^\abs_e+h^\app_e+h^\col_e+h^\vx_e.
\end{gather*}
By Lemma~\ref{lm:tempslice}(ii), $\mc{F}'$ is a rainbow super $R$-template with parameters $(m/2,2\eps,d/2,\dD/2)$.

\medskip
\noindent
{\bf Step 1.}
Next we embed (the vertices but not the colours of) $H_{\abs}$ into $\mathcal{F}'$.
For each $i \in [r]$,
let $V^{\abs}_i \subseteq V_i'$ be a uniform random subset of size $n^\abs_i$.
Lemma~\ref{lm:tempslice}(iii) applied with $\mc{F}',p_{\abs}/3,6$ playing the roles of $\mc{F},\aA,k$ implies that the subtemplate $\mc{F}_\abs$ of $\mc{F}'$ induced by $(V^\abs_i: i \in [r])$ is a rainbow super $R$-template with parameters $(p_{\abs} m/3,\sqrt{\eps},d^2/64,\dD/6)$. Furthermore, a Chernoff bound implies that we may assume
$|T^1_y \cap V_{\phi(y)}^{\abs}| \geq 2\nu' n_i^\abs/3 \geq \nu'|V_i^\abs|/3$ for all $y \in V(H_\abs)$ with a target set $T_y^1$.

Let $V_i'' := V_i'\sm V^\abs_i$.
We have
\begin{equation}\label{eq:Vi''}
|V_i''| \geq |V_i|-2\DD\sqrt{\mu} m-n^\abs_i \stackrel{(\ref{eq:nij})}{\geq} (1-2p_{\abs})|V_i| \geq m/2.
\end{equation}
Lemma~\ref{lm:tempslice}(ii) applied with $\mc{F},2p_\abs$ playing the roles of $\mc{F},\aA$ implies that the subtemplate $\mc{F}''$ induced by $V_i''$ is a rainbow super $R$-template with parameters $(m/2,2\eps,d/2,\dD/2)$,
and a Chernoff bound implies that we may assume
\begin{equation}\label{eq:T1y}
|T^1_y \cap V_{\phi(y)}''| \geq 2(\nu'm-n^\abs_i)/3 \geq \nu' m/2
\end{equation}
for all $y$ with a target set $T^1_y$.

We will embed $H_{\abs}$ into the $\lL_3$-thick graph $T^{\lL_3}_{\mathcal{F}_\abs}$. By Proposition~\ref{lm:thick}, for every $ij \in E(R)$, $T^{\lL_3}_{\mathcal{F}_\abs}[V_i^{\abs},V_j^{\abs}]$ is $(\sqrt{\eps},d^2/128)$-half-superregular. Apply Theorem~\ref{lm:blowup} (the blow-up lemma) with target sets $T^1_y\cap V_{\phi(y)}^{\abs}$ and parameters $\sqrt{\eps},\sqrt{\aA},\sqrt{\nu'},d^2/128$ playing the roles of $\eps,\aA,\nu,d$ to find an embedding $\tau$ of $H_{\abs}$ into $T^{\lL_3}_{\mc{F}_{\abs}}$ such that $\tau(x) \in V_{\phi(x)}$ for all $x \in V(H_\abs)$ and $\tau(y)\in T_y^1$ for every $y \in U^1_i$ with $i \in [r]$.
%Let
%$$
%V^\abs_i := \tau(V(H_\abs)) \cap V_i,\quad\text{so }|V^\abs_i|=n^\abs_i
%\quad\text{for each }i \in [r].
%$$
Note that we haven't yet defined $\sS(e)$ for any $e\in E(H_{\abs})$. However, by our definition, for every such $e=xy$ we have $\tau(x)\tau(y)\in E(G_c)$ for at least $\lL_3 |\Cols_e'|$ colours $c\in \Cols_e'$.
For each $e \in E(R)$, let $G_{\abs, e}$ be the auxiliary bipartite graph with vertex classes $Z_e:= \{\tau(x)\tau(y): xy \in E(H_\abs^e)\}$ and $\Cols'_e$, where $\{\tau(x)\tau(y),c\}$ is an edge whenever $\tau(x)\tau(y) \in E(G_c)$.
Then~(\ref{eq:hij}) implies that $|Z_e|=h^\abs_e \leq 3p_\abs |\Cols_e|/2 \leq 2p_\abs|\Cols_e'|$,
and by construction, every $\tau(x)\tau(y)$ in $Z_e$ has degree at least $\lL_3|\Cols_e'|$ in $G_{\abs,e}$.
For each $e \in E(R)$, define constants $\ell_e$ and $p_e$ via
\begin{eqnarray}
\label{eq:lp} \ell_e &:=&\lL_1 h^\abs_e\quad\text{and}\\
\nonumber p_e|\Cols'_e| &:=& \textstyle (1-p_\abs)|\Cols_e'|-(h^\abs_e-\ell_e)-h^\app_e = h^\vx_e+h^\col_e-p_\abs|\Cols_e'|+\lL_1h^\abs_e \\
&\stackrel{(\ref{eq:hij})}{=}&  (1\pm\tfrac{1}{4})h^\vx_e \in [p_{\vx}|\Cols'_e|/3,3p_{\vx}|\Cols'_e|].
\end{eqnarray}
Thus for each $e \in E(R)$ we can apply Lemma~\ref{lm:mmp} with $G_{\abs,e},Z_e,\Cols'_e,\lL_1,p_e,\lL_3$
playing the roles of $G,U,\Cols,\lL_1,\lL_2,\lL_3$ to obtain disjoint sets $\ms{A}_e, \ms{B}_e\subseteq \Cols_e'$ such that
\begin{enumerate}[label=(Q\arabic*),ref=(Q\arabic*)]
\item\label{partI} $|\ms{A}_e|=h^\abs_e -\ell_e \stackrel{(\ref{eq:hij})}{\leq} 2p_{\abs}|\Cols_e'|$;
\item\label{partII} $|\ms{B}_e|=p_e|\Cols_e'|$;
\item\label{partIII} for any set $\ms{B}^0_e\subseteq \ms{B}_e$ of size $\ell_e$, there exists a colouring using colours from $\ms{A}_e\cup \ms{B}^0_e$ that makes the embedding $\tau$ of $H^{\abs}_e$ rainbow. That is, there is a bijection $\sS_e: E(H^\abs_e) \to \ms{A}_e \cup \ms{B}_e^0$ such that $\tau(x)\tau(y) \in G_{\sS_e(xy)}$ for all $xy \in E(H^\abs_e)$.
\end{enumerate}

\medskip
\noindent
{\bf Preparation for Steps~2--4.}
Recall that the template $\mc{F}''=(\mc{V}'',\Cols',\bm{G}'')$ has vertex clusters $\mc{V}'' := (V_i'' := V_i'\sm V^\abs_i: i \in [r])$, colour clusters $(\Cols'_e: e \in E(R))$ and graphs $(\bm{G}'')^{ij} = (G_c[V_i'',V_j'']: c \in \Cols'_{ij})$, and $\mc{F}''$ is a rainbow super $R$-template with parameters $(m/2,2\eps,d/2,\dD/2)$.
Let $n^\colvx_i := n^\col_i+n^\vx_i$ for each $i \in [r]$ and let $p_\colvx := p_\col+p_\vx$.
During the rest of the proof, for each $\diamond \in \{\app,\colvx\}$ we will identify pairwise disjoint vertex sets $V^\diamond_i$ in the vertex clusters $V_i''$, $i \in [r]$, so that
$$
|V^\diamond_i|=n^\diamond_i,\quad\text{and}\quad
V_i'' = V^\app_i \cup V^\colvx_i,\quad\text{where}\quad
\mc{V}^\diamond := \{V^\diamond_1,\ldots,V^\diamond_r\} \ \forall \diamond\in\{\app,\colvx\}.
$$
%Each $H_\circ$ will be embedded inside the parts $V^\circ_i$, that is, 
We will choose $\tau(x) \in V^\app_{\phi(x)}$
for all $x \in V(H_\app)$ and $\tau(x) \in V^\colvx_{\phi(x)}$ for all $x \in V(H_\col) \cup V(H_\vx)$.
We will identify pairwise disjoint colour sets $\Cols^\circ$, so that
$$
\Cols^\circ = \bigcup_{e \in E(R)}\Cols^\circ_e \ \forall \circ\in\{\app,\col,\vx\},\quad
\Cols_e' = \bigcup_{\circ \in \{\app,\col,\vx\}}\Cols^\circ_e
$$
and for each $xy \in E(H_\circ)$, we will ensure $\sS(xy) \in G_c$ for some $c \in \Cols^\circ_{\phi(x)\phi(y)}$.
Given such sets of vertices and colours for $\circ \in \{\app,\col,\vx\}$, we define $\mc{F}_\circ := (\mc{V}^{\circ'},\Cols^\circ,\bm{G}^\circ)$ to be the subtemplate of $\mc{F}''$ induced by $\mc{V}^{\circ'},\Cols^\circ$, where $\app'=\app$ and $\vx'=\col'=\colvx$.
Note that these templates are always rainbow $R$-templates with pairwise disjoint sets of colours, but $\mc{F}_\col$ and $\mc{F}_\vx$ share the same vertex clusters.

We will now define the vertex sets $V^{\circ'}_i$ for $\circ' \in \{\app,\colvx\}$,
but colour sets will be defined sequentially, given the colours used in each step.
For each $i \in [r]$, do the following.
%To choose $V^\vx_i$, we need to exclude some bad vertices first.
%By Lemma~\ref{lm:tempslice}(i), the subtemplate $\mc{F}''$ of $\mc{F}$
%induced by $\ms{B}_e$ is a rainbow template with parameters $(m,2\eps,d/2,2D)$.
For each $j \in N_R(i)$,
recall that $(G_c[V_i'',V_j'']: c \in \Cols'_{ij})$ is $(2\eps,d/2)$-superregular.
By~(\ref{eq:lp}) and \ref{partII}, we have $|\ms{B}_{ij}| \geq p_{\vx}|\Cols'_{ij}|/2$, so by Lemma~\ref{lm:slice}(i), $(G_c[V_i'',V_j'']:c \in \ms{B}_{ij})$ is $(4\eps/p_{\vx},d/4)$-regular and hence $(\sqrt{\eps}/r,d/4)$-regular.
By Lemma~\ref{lm:standard}(i), there is a set $\ov{V}^{j}_i \subseteq V''_i$ with $|\ov{V}^{j}_i| \leq \sqrt{\eps}|V_i''|/r$ such that
\begin{equation}\label{eq:Bij}
\sum_{c \in \ms{B}_{ij}}d_{G_c}(v) \geq d|V_j''||\ms{B}_{ij}|/5 \quad\text{for all }v \in V_i'' \sm \ov{V}^{j}_i.
\end{equation}
%Let $\ov{V}^{\vx}_i := \bigcup_{j \in N_R(i)}\ov{V}^{\vx, j}_i$.
%Then every colour in $\ms{C}'_{ij} \sm (\ms{A}_{ij} \cup \ms{B}_{ij})$ has many edges induced on these parts.
Let $\ov{V}_i := \bigcup_{j \in N_R(i)}\ov{V}^j_i$, so $|\ov{V}_i| \leq \sqrt{\eps}|V_i''|$,
and let $V^\colvx_i$ be a uniform random subset of $V_i'' \sm \ov{V}_i$ of size $n^\colvx_i$,
and let $V^\app_i := V_i'' \sm V^\colvx_i$. So $|V^\app_i| =n^\app_i$.
%Let $V^{\vx}_i \subseteq V_i'' \sm (V^\col_i \cup \bigcup_{j \in N_R(i)}\ov{V}^{j}_i)$ be a uniform random subset of size $n^\vx_i$,
%which is possible since we are choosing from a superset of size at least
%$$
%(1 - \eps^{1/3})|V_i'|-n^\col_i \stackrel{(\ref{eq:nij}),(\ref{eq:V'Hcon})}{\geq} (1-\eps^{1/3})(|V_i|-2\DD\eps m)-2p_{\col}|V_i| > p_{\vx}|V_i|.
%$$
%Let $V^{\app}_i := V'_i \sm (V^\col_i \cup V^\vx_i)$, so $|V^\app_i|=n^\app_i$ by~(\ref{eq:sum}).
%Induced target sets are still large.
Let $T_y^2 := T_y^1 \cap V^{\circ'(y)}_{\phi(y)}$ for all $y \in U^1_i$ and $i \in [r]$ (i.e.~those $y$ for which $T_y^1$ has been defined), where $\circ'(y)=\app$ if $y \in V(H_\app)$ and $\circ'(y)=\colvx$ if $y \in V(H_\col) \cup V(H_\vx)$.

%We could have equivalently chosen $V^\col_i$ by removing every $\ov{V}^j_i$ and then choosing a uniform random subset of size $n^\col_i$; the subset obtained in this way has the same distribution.
%Thus for both $\circ \in \{\col,\vx\}$ since each $V^\circ_i$ is either a uniform random subset of $V_i''$ or is a uniform random subset of $V_i''$ with at most an $\eps^{1/3}$ fraction of vertices removed.
%Therefore,
For all $c \in \Cols'_{ij}$, the superregularity of our graph collections implies that
\begin{align*}
\mb{E}(e(G_c[V^\colvx_i,V^\colvx_j]))&\geq(n^\colvx_i/|V_i''|)(n^\colvx_j/|V_j''|) (e(G_c[V_i'',V_j''])-\sqrt{\eps}|V_i''||V_j''|)\\
&\geq d n^\colvx_i n^\colvx_j/3\stackrel{(\ref{eq:nij})}{\geq} d p_{\vx}^2\dD^2 m^2/12.
\end{align*}
For all $i \in [r]$ and $y \in U^1_i$ we have
$$
\mb{E}(|T^2_y|) \geq (n^{\circ'(y)}_{i}/|V''_{i}|)(|T^1_y \cap V''_{i}|-\sqrt{\eps}|V''_{i}|) \stackrel{(\ref{eq:nij}),(\ref{eq:T1y}),(\ref{eq:Vi''})}{\geq} p_{\circ'(y)}\nu'm/5.
$$
Further, for all $ij \in E(R)$ and $v \in V^\colvx_i$, since $v \notin \ov{V}^j_i$ we have
$$
\textstyle\mb{E}\left(\sum_{c \in \ms{B}_{ij}}d_{G_c}(v,V^\colvx_j)\right)\geq(n^\colvx_j/|V_j''|)\sum_{c \in \ms{B}_{ij}}(d_{G_c}(v,V_j'')-\sqrt{\eps}|V_j''|) \stackrel{(\ref{eq:Bij})}{\geq} d n^\colvx_j|\ms{B}_{ij}|/11.
$$

By Chernoff bounds, we may assume that each of the above quantities are close to their expectations, so
\begin{enumerate}[label=(C\arabic*),ref=(C\arabic*)]
\item\label{c1} $|T_y^2| \geq p_{\circ'(y)}\nu'm/6$ for all $y \in U^1_i$ and $i \in [r]$ (i.e.~all those $y$ which have a target set);
\item\label{c2} $e(G_c[V^\colvx_i,V^\colvx_j]) \geq dn^\colvx_i n^\colvx_j/13$ for all $ij \in E(R)$ and $c \in \Cols'_{ij}$;
\item\label{c3} $\sum_{c \in \ms{B}_{ij}}d_{G_c}(v,V^\colvx_j) \geq dn^\colvx_j|\ms{B}_{ij}|/12$ for all $ij \in E(R)$ and $v \in V^\colvx_i$.
\end{enumerate}
Let $m' := p_{\vx} m/8$. Now~(\ref{eq:target}) and~\ref{c1} imply that, for all $i \in [r]$, 
the number $|U^1_i|$ of vertices $y \in \phi^{-1}(i)$ with a target set $T^2_y$ satisfy
\begin{equation}\label{eq:targets2}
|U^1_i| \leq 2\aA m \leq \sqrt{\aA}m',\quad\text{and}\quad
|T^2_y| \geq p_{\vx}\nu'm/6 > \nu'm'.
\end{equation}
For all $i \in [r]$ we have
$|V^\colvx_i|=n^\colvx_i=n^\col_i+n^\vx_i $, so
\begin{equation}\label{eq:Vleft}
p_{\vx}|V_i''|/3 \stackrel{(\ref{eq:Vi''})}{\leq} p_{\vx}|V_i|/2 \stackrel{(\ref{eq:nij})}{\leq} |V^\colvx_i| \stackrel{(\ref{eq:nij})}{\leq} 2p_{\vx}|V_i| \stackrel{(\ref{eq:Vi''})}{\leq} 3p_{\vx}|V_i''|.
\end{equation}
%For~(ii), we used the fact that $\mc{F}'$ is superregular, so $\sum_{c \in \Cols'_{ij}}d_{G_c}(v,V_j') \geq d|V_j'|/2$ for all $ij \in E(R)$ and $v \in V_i'$ and applied the second expectation calculation with $\ms{S} := \ms{C}'_{ij}$.
%For the second part of~(iii), we used~(\ref{eq:Bij}) and the fact that $V^\vx_i \cap \ov{V}^{\vx,j}_i$ for all $j \in N_R(i)$ and applied the second expectation calculation with $\ms{S} := \ms{B}_{ij}$.

%By Lemma~\ref{lm:tempslice}(ii) we get a rainbow super template $\mathcal{F}_2$ that is $(8\eps,\frac{d}{4})$-superregular. Note that for every $y\in N_H(X)-H_{\abs}$, we still have $|C_y\sm \tau(H_{\abs})|\geq \frac{c}{2}$.

%Note that we have already embedded $p_{\abs}$ fraction of $H$ into $\mathcal{F}$.
%Now Suppose that the vertex sets of $\mathcal{F}_2$ are $V_i'(1\leq i\leq r)$, we randomly and uniformly partition $V_i'$ into two parts $V_i^a$ and $V_i^b$ such that $|V_i^a|/|V'_i|=1-\lL_3-p_{\abs}$ and $|V_i^b|/|V'_i|=\lL_3-\lL_1+p_{\abs}$.

We claim that the following properties about templates $\mc{F}_\circ$ with colour sets $\Cols^\circ_e$ for $\circ \in \{\app,\col,\vx\}$ hold.
\begin{enumerate}[label=(F\arabic*),ref=(F\arabic*)]
\item\label{f1} Let
$$
\Cols^\app_e := \ms{C}'_e \sm (\ms{A}_e \cup \ms{B}_e)\text{ for all }e \in E(R).
$$
Then $\mc{F}_{\app}$, with vertex clusters $(V^\app_i: i \in [r])$, is a rainbow super $R$-template with parameters $(m/4,4\eps,d/4,\dD/4)$.
\item\label{f2} Suppose
$$
\Cols^\col_e = \ms{B}_e \cup \ms{D}_e\text{ where }\ms{D}_e \subseteq \Cols^\app\text{ for all }e \in E(R).
$$
Then
$\mc{F}_{\col}$, with vertex clusters $(V^\colvx_i: i \in [r])$, is a rainbow $R$-template with parameters $(m',\sqrt{\eps},d/4,\dD/24)$ with the property that $e(G_c[V^\colvx_i,V^\colvx_j]) \geq d|V^\colvx_i||V^\colvx_j|/22$ for all $c \in \ms{D}_e$ and $ij \in E(R)$.
\item\label{f3} Suppose
$$
\Cols^\vx_e \subseteq \ms{B}_e\text{ and }|\ms{B}_e \sm \Cols^\vx_e| \leq 2p_{\col} |\Cols_e'|\text{ for all }e \in E(R).
$$
Then $\mc{F}_{\vx}$, with colour clusters $(V^\colvx_i: i \in [r])$, is a rainbow super $R$-template with parameters $(m',\sqrt{\eps},d/13,\dD/24)$.
\end{enumerate}
These properties follow from Lemma~\ref{lm:tempslice} applied to $\mc{F}_\circ$ as a subtemplate of $\mc{F}''=(\mc{V}'',\Cols',\bm{G}'')$ (which has parameters $(m/2,2\eps,d/2,\dD/2)$), as follows.
First,~\ref{f1} holds by Lemma~\ref{lm:tempslice}(ii) since $\mc{F}_\app$ is a small perturbation of $\mc{F}''$. Indeed,~\ref{partI} and~\ref{partII} implies that
$
|\Cols'_e \sm \Cols^\app_e| \leq (2p_{\abs}+3p_{\vx})|\Cols'_e| \leq 4p_{\vx}|\Cols'_e| \leq \sqrt{p_\vx}m/2
$,
and
$|V_i'' \sm V^\app_i| = |V^\colvx_i| \leq 3p_{\vx}|V_i''| \leq \sqrt{p_\vx}m/2$ by~(\ref{eq:Vleft}),
so we can apply the lemma with $2\eps,\sqrt{p_{\vx}},d/2$ playing the roles of $\eps,\aA,d$
to obtain~\ref{f1}.

For~\ref{f2},~\ref{partII} and~(\ref{eq:lp}) imply that
$|\Cols^\col_e| \geq |\ms{B}_e| = p_e|\Cols'_e| > p_{\vx}|\Cols_e'|/4$.
Together with~(\ref{eq:Vleft}), this means that we can
apply Lemma~\ref{lm:tempslice}(i) with parameters $m/2,2\eps,p_{\vx}/4,d/2,\dD/2,12$ playing the roles of $m,\eps,\aA,d,\dD,k$ to see that
$\mc{F}_\col$ is a rainbow $R$-template with parameters $(m',\sqrt{\eps},d/4,\dD/24)$. 
The second property follows from~\ref{c2} since $\ms{D}_e \subseteq \Cols^\app_e \subseteq \Cols'_e$.

For property~\ref{f3}, we have $|\Cols^\vx_e| \geq |\ms{B}_e|-2p_{\col}|\Cols'_e| \geq p_{\vx}|\Cols_e'|/4$ by~(\ref{eq:lp}). Together with~(\ref{eq:Vleft}), this means we can apply Lemma~\ref{lm:tempslice}(i)
with the same parameters as previously to see that
$\mc{F}_{\vx}$ is a rainbow $R$-template with the given parameters.
For superregularity, we have for all $e=ij \in E(R)$ and $v \in V^\colvx_i$ that
\begin{align*}
\sum_{c \in \Cols^\vx_{ij}}d_{G_c}(v,V^\colvx_j) &\geq \sum_{c \in \ms{B}_{ij}}d_{G_c}(v,V^\colvx_j)-2p_{\col}|\Cols'_{ij}||V_j''| \stackrel{(\ref{eq:vxpart}),(\ref{eq:Bij}),\ref{c3}}{\geq} dn^\colvx_j|\ms{B}_{ij}|/12-2p_{\col}\DD m^2/\dD^2\\
&\geq dn^\colvx_j|\Cols^{\rm vx}_{ij}|/13,
\end{align*}
and $e(G_c[V^\colvx_i,V^\colvx_j]) \geq dn^\colvx_i n^\colvx_j/13$ for all $c \in \Cols^\vx_{ij}$ by~\ref{c2}.
Thus~\ref{f1}--\ref{f3} all hold.

Now it is a matter of embedding each $H_\circ$ into its corresponding template,
using suitable (unused) colours at each step.
The image of $V(H_\vx)$ in each $V_i$ will be the remaining vertices of $V^\colvx_i$
after embedding $H_\col$,
which is most of this set since $H_\col$ is much smaller than $H_\vx$.

\medskip
\noindent
{\bf Step 2.}
We embed $H_{\app}$ into $\mathcal{F}_{\app}$ using Lemma~\ref{lm:approx} (embedding lemma with extra colours) with parameters 
$m/4,4\eps,\sqrt{\mu},\sqrt{\aA},2p_{\abs}\dD,\nu'/7,d/4,\dD/4$ playing the roles of 
$m,\eps,\mu,\aA,\bB,\nu,d,\dD$. 
For this, we recall that $H_\app$ is the union of components of size at most $\sqrt{\mu}|V(H_\app)|$, and there is a graph homomorphism from $H_\app$ into $R$.
By~\ref{c1} and~\ref{f1}, it now suffices to check that colour clusters have a suitable size to apply the lemma.
Indeed, for all $e \in E(R)$ we have %$h^\app_e \leq |\Cols'_e|-h^\abs_e-h^\col_e-h^\vx_e=|\Cols'_e \sm (\ms{A}_e \cup \ms{B}_e)$
\begin{eqnarray}
\nonumber |\Cols^\app_e|-h^\app_e &\stackrel{\ref{f1}}{=}& |\Cols'_e|-|\ms{A}_e|-|\ms{B}_e|-h^\app_e
\stackrel{\ref{partI},\ref{partII}}{=}|\Cols_e'|-(h^\abs_e-\ell_e)-p_e|\Cols_e'|-h_e^{\app} \stackrel{(\ref{eq:lp})}{=} p_{\abs}|\Cols_e'|\\
\label{eq:Capp} &\in& [p_{\abs}\dD m/2,p_{\abs}\DD m/\dD].
\end{eqnarray}
Thus we can apply Lemma~\ref{lm:approx} to obtain $\tau(V(H^\app))$ and $\sS(E(H^\app))$
where each $y$ with a target set $T^2_y$ is embedded inside it.

For each $e \in E(R)$, let $\ms{D}_e := \Cols^\app_e \sm \sS(E(H_\app))$ and let $\Cols^{\col}_e := \ms{B}_e \cup \ms{D}_e$.

\medskip
\noindent
{\bf Step 3.}
We embed $H_\col$ into $\mc{F}_\col$ by applying Lemma~\ref{lm:partialcol} (embedding lemma with target sets and prescribed colours) with $m',\sqrt{\eps},\sqrt{\aA},\sqrt{p_{\abs}},\sqrt{p_{\col}},p_{\col},\nu',d/4,\dD/24$ playing the roles of
$m,\eps,\aA,\gG,\lL_1,\lL_2,\nu,d,\dD$.
To see that this is possible, we have $|V(H_\col) \cap \phi^{-1}(i)|=n^\col_i \leq 2p_\col m/\dD \leq \sqrt{p_{\col}}m'$.
We also have
$$
e(H_\col[\phi^{-1}(i),\phi^{-1}(j)]) = h^\col_{ij} \stackrel{(\ref{eq:hij})}{\geq} p_\col\dD m/2 \geq p_{\col} m'.
$$
Equation (\ref{eq:targets2}) implies that the number of vertices with target sets and the size of these target sets are suitable.
By~(\ref{eq:Capp}), for every $e \in E(R)$ we have
\begin{equation}\label{eq:De}
|\ms{D}_e|=|\Cols_e^\app|-h_e^\app \leq p_\abs\DD m/\dD \leq \sqrt{p_\abs}m'.
\end{equation}
By~\ref{f2}, it remains to check that $|\Cols^\col_e \sm \ms{D}| \geq d|\Cols^\col_e|/4$. Since the original template was rainbow, $\Cols^\col_e \sm \ms{D} = \Cols^\col_e \sm \ms{D}_e = \ms{B}_e$.
Equation~(\ref{eq:De}) implies that $|\ms{B}_e| \geq |\Cols^\col_e| - \sqrt{p_\abs}m' \geq |\Cols^\col_e|/2 \geq d|\Cols^\col_e|/4$, as required.
Thus we can apply Lemma~\ref{lm:partialcol} to obtain $\tau(V(H_\col))$ and $\sS(E(H_\col))$
where each $y$ with a target set $T^2_y$ (i.e.~those in $U^1_{\phi(y)}$) is embedded inside it.

Let $\Cols^\vx_e := \ms{B}_e \sm \sS(E(H_\col))$
and $V^\vx_i := V^\colvx_i \sm \tau(V(H_\col))$, so $V^\vx_i = n^\vx_i$.

\medskip
\noindent
{\bf Step 4.}
Let $\mc{F}_\vx'$ be the subtemplate of $\mc{F}_\vx$ induced by $(V^\vx_i: i \in [r])$.
It is a small perturbation:
$|V^\colvx_i \sm V^\vx_i| = n^\col_i \leq 2p_{\col}m/\dD \leq p_{\vx} m'$,
so Lemma~\ref{lm:tempslice}(ii) with $m',\sqrt{\eps},p_{\vx},d/13,\dD/24$
playing the roles of $m,\eps,\aA,d,\dD$ implies that $\mc{F}'_\vx$ is a rainbow super template with parameters $(m'/2,2\sqrt{\eps},d/26,\dD/48)$.
Let $T^3_y := T^2_y \cap V^\vx_i$ for all $i \in [r]$ and $y \in V^\colvx_i \cap U^1_i$.

We embed $H_\vx$ into $\mc{F}'_\vx$ by applying Lemma~\ref{lm:approx}
with parameters 
$\sqrt{\mu},\sqrt{\aA},\lL_1^2,\nu'/3$
playing the roles of $\mu,\aA,\bB,\nu$ (and template parameters as above).
For this, we recall that $H_\vx$ is the union of vertex disjoint components of size at most $\sqrt{\mu}|V(H_\vx)|$ and there is a graph homomorphism from $H_\vx$ into $R$.
By~(\ref{eq:targets2}), there are suitably few vertices with target sets $T^3_y$, and all target sets are suitably large.
By~\ref{f3}, it now suffices to check that every
$|\Cols^\vx_e| - h^\vx_e$ is large.
We have
\begin{align*}
|\Cols^\vx_e|-h^\vx_e &= |\ms{B}_e|-h^\col_e-h^\vx_e \stackrel{\ref{f1}}{=} |\Cols_e'|-|\ms{A}_e|-h^\app_e-h^\col_e-h^\vx_e
=h^\abs_e-|\ms{A}_e|=\ell_e=\lL_1 h^\abs_e\\
&\geq \lL_1p_{\abs}\dD m/2 \geq \lL_1^2 m'/2.
\end{align*}
Thus we can apply Lemma~\ref{lm:approx}
to obtain $\tau(V(H_\vx))$ and $\sS(E(H_\vx))$
where each $y$ with a target set $T^2_y$ is embedded inside it.

Let $\Cols^\abs_e := \ms{A}_e \cup (\Cols^\vx_e \sm \sS(E(H_\vx)))$.

\medskip
\noindent
{\bf Step 5.}
Since $\Cols_e' = \Cols^\app_e \cup \Cols^\col_e \cup \Cols^\vx_e \cup \Cols^\abs_e$ is a disjoint union,
and $\ms{A}_e \subseteq \Cols^\abs_e \subseteq \ms{A}_e \cup \ms{B}_e$, the set $\Cols^\abs_e \cap \ms{B}_e$ must have exactly the right size:
$$
|\Cols^\abs_e \cap \ms{B}_e| = |\Cols'_e|-h^\app_e-h^\col_e-h^\vx_e-|\ms{A}_e|=|\Cols'_e|-(|\Cols_e'|-h^\abs_e)-|\ms{A}_e|=\ell_e.
$$
Thus, by~\ref{partIII}, for each $e \in E(R)$ we can find $\sS_e$ with image $\sS_e(E(H_\abs))=\Cols^\abs_e$.
Extending $\sS$ by all of these $\sS_e$ completes the transversal embedding.
\end{proof}

\subsection{Proof of Theorem~\ref{th:blowupintro}}

\begin{proof}[Proof of Theorem~\ref{th:blowupintro}]
We may assume without loss of generality that $1/m \ll \eps,\mu,\aA \ll \nu,d,\dD,1/\DD,1/r \leq 1/2$.
Choose an additional constant $\eps'$ with $\eps \ll \eps' \ll \nu,d,\dD,1/\DD,1/r$ such that the conclusion of Lemma~\ref{lm:hsrtosr} holds with $\eps,\eps',d,\dD,3$ playing the roles of $\eps,\eps',d,\dD,k$.
We may further assume that the conclusion of Theorem~\ref{th:blowup} holds with $\eps',d^2/2$ playing the roles of $\eps,d$ and the other parameters unchanged.

Let $\bm{G},\Cols,R,\mc{V}:=(V_1,\ldots,V_r)$ be as in the statement of Theorem~\ref{th:blowupintro}. Let $\phi:V(H)\to V(R)$ be such that $\phi(x)=j$ whenever $x \in A_j$. So $\phi$ is a graph homomorphism. 
Our hypothesis is that $\mc{F}=(\mc{V},\Cols,\bm{G})$ is a rainbow half-super $R$-template with parameters $(m,\eps,d,\dD)$.
Lemma~\ref{lm:tempslice}(iv) implies that for all $c \in \Cols$ there is $G_c' \subseteq G_c$ such that, writing $\bm{G}' := (G_c':c \in \Cols)$, the template $\mc{F}'=(\mc{V},\Cols,\bm{G}')$ is super with parameters $(m,\eps',d^2/2,\dD)$.

Apply Theorem~\ref{th:blowup} to $\mc{F'}$ to obtain the required embedding.
\end{proof}

\section{Proof of Theorems~\ref{th:quasi} and~\ref{th:quasi3}}\label{sec:dense}

In this section we prove two applications of our transversal blow-up lemma, to super uniformly dense graph collections (Theorem~\ref{th:quasi}) and super uniformly dense $3$-graphs (Theorem~\ref{th:quasi3}).
The latter is an easy consequence of the former.

\begin{proof}[Proof of Theorem~\ref{th:quasi}]
Let $\DD,d,\dD,\aA>0$ be given and let $r := \DD+1$.
Choose additional constants such that
$$
0<1/n_0 \ll \eta,\mu \ll \eps \ll \dD_1 \ll \dD_2 \ll \ldots \ll \dD_{\binom{r}{2}+1} \ll \nu' \ll \aA \ll \dD,d,1/\DD,
$$
where we are assuming without loss of generality that $\aA \ll \dD,d/1/\DD$, and
so that the conclusion of Lemma~\ref{lm:hsrtosr} holds with $3,\eta^{1/4},\eps,\aA^2/6$ playing the roles of $k,\eps,\eps',d$, 
and Lemma~\ref{lm:partial} holds with $n_0/r,\eps,\sqrt{\dD_1},\aA^4/72$ playing the roles of $m,\eps,\gG,d$
and Theorem~\ref{th:blowup} holds with
$\dD_2^2 n_0/2,\sqrt{\eps},2\mu,\dD_1^{1/3},\nu',\aA^9,\dD/2$ playing the roles of
$m,\eps,\mu,\aA,\nu,d,\dD$.
Let $n \geq n_0$ be an integer and let $\bm{G}=(G_c: c \in \Cols)$ be a graph collection on a vertex set $V$ of size $n$ and $H$ a graph on $n$ vertices satisfying the conditions of the theorem.

By assumption, $\dD n \leq e(H)=|\Cols| \leq \DD n$.
Let $m := \lfloor n/r\rfloor$.
The Hajnal-Szemer\'edi theorem implies that there is a partition $V(H) = A_1 \cup \ldots \cup A_r$ into parts of size $m$ and $m+1$ such that all edges go between different parts.
Let $V=V_1 \cup \ldots \cup V_r$ be a random partition of $V$ with $|V_i|=|A_i|$ for all $i \in [r]$, and let $\mc{V} := \{V_1,\ldots,V_r\}$. We claim that with high probability, the following hold for any $ij \in E(K_r)=\binom{[r]}{2}$:
\begin{enumerate}[label=(\roman*),ref=(\roman*)]
\item\label{cc1} for all $V_h' \subseteq V_h$ with $|V_h'| \geq \eta^{1/4}|V_h|$ for $h=i,j$ and $\Cols' \subseteq \Cols$ with $|\Cols'| \geq \eta^{1/4}|\Cols|$, we have $\sum_{c \in \Cols'}e_{G_c}(V_i',V_j') \geq d|\Cols'||V_i'||V_j'|/2$;
\item\label{cc2} for each labelling $\{g,h\}=\{i,j\}$ and $v \in V_g$, we have $\sum_{c \in \Cols}d_{G_c}(v,V_h) \geq \aA^2|V_h||\Cols|/6$;
\item\label{cc3} $e(G_c) \geq \aA^2|V_i||V_j|/6$ for all $c \in \Cols$.
\end{enumerate}
For~\ref{cc1}, we have
$$
\eta n^3 < (d/2) \cdot \eta^{3/4}\dD n^3/(2r)^2 < d\eta^{3/4}\dD nm^2/2
\leq d(\eta^{1/4})^3|\Cols||V_i||V_j|/2 \leq d|\Cols'||V_i'||V_j'|/2.
$$
The statement then follows directly from the definition of $(d,\eta)$-dense. 
To prove~\ref{cc2}, for each vertex $v \in V$, there are at least $\aA|\Cols|/2$ colours $c \in \Cols$ for which $d_{G_c}(v) \geq \aA n/2$, and a Chernoff-type bound implies that, with high probability $d_{G_c}(v,V_j) \geq \aA|V_j|/3$.
Part~\ref{cc3} is proved similarly by swapping colour and vertex.

Parts~\ref{cc1}--\ref{cc3} imply that the $3$-graph $G^{(3),ij}$ of $\bm{G}^{ij} := (G_c[V_i,V_j]: c \in \Cols)$ 
 is $(\eta^{1/4},\aA^2/6)$-half-superregular.
Lemma~\ref{lm:hsrtosr} and our choice of parameters implies that every $G^{(3),ij}$ contains a spanning subhypergraph $J^{(3),ij}$ which is $(\eps,\aA^4/72)$-superregular.
Thus $\mc{F} := (\mc{V},\Cols,\bm{J})$ is a super $K_r$-template with parameters $(m,\eps,\aA^4/72,\dD)$, where $\bm{J}^{ij}$ is the graph collection whose $3$-graph is $J^{(3),ij}$ 
and $\Cols_{ij} = \Cols$ for all $ij \in E(K_r)$.

Let $d_{ij}:=e(H[A_i,A_j])/n$ for all $ij \in E(K_r)$. We have $\sum_{ij} d_{ij} =e(H)/n \geq \dD$.
Thus there is at least one $ij$ such that $d_{ij} \geq \dD_{\binom{r}{2}+1}$.
By the pigeonhole principle, there is $\ell \in [\binom{r}{2}]$ such that for all $ij$,
either $d_{ij} \leq \dD_\ell$, or $d_{ij} \geq \dD_{\ell+1}$.
Let $P^{<} := \{ij \in \binom{[r]}{2}: d_{ij} \leq \dD_\ell\}$.
We will embed those sparse $H[A_i,A_j]$, with indices $ij$ in $P^{<}$, using Lemma~\ref{lm:partial} (Embedding lemma with target and candidate sets), while the remaining (dense) pairs will be embedded using Theorem~\ref{th:blowup} (Transversal blow-up lemma) with target sets from the initial embedding of sparse pairs.

Let $X$ be the union of non-isolated vertices in $H[A_i,A_j]$ over all $ij \in P^<$,
let $Y := N_H(X) \sm X$, and let $H^< := H[X \cup Y] \sm H[Y]$, whose edge set is precisely those edges of $H$ incident to $X$.
We have $v(H^<) \leq 2\binom{r}{2}\dD_\ell n \leq \sqrt{\dD_\ell}m$.
Apply Lemma~\ref{lm:partial} to $\mc{F}$ 
with $m,\eps,\sqrt{\dD_\ell},\nu',1,\aA^4/72,\dD,\DD,r$ playing the roles of $m,\eps,\gG,\nu',\nu,d,\dD,\DD,r$, and $T_w:=V_i$ for all $w \in V(H^<) \cap A_i$ and $i \in [r]$, to find injective maps $\tau:X \to V$ and $\sS: E(H^<) \to \Cols$ such that
$\tau(x)\tau(x') \in J_{\sS(xx')}$ for all $xx' \in E(H[X])$,
for all $i \in [r]$ we have
$\tau(x) \in V_i$ for all $x \in A_i$, and for all $y \in Y \cap A_i$ there exists $C_y \subseteq V_i \sm \tau(X)$ such that $C_y \subseteq \bigcap_{x \in N_{H^<}(y) \cap X}N_{G_{\sS(xy)}}(\tau(x))$ and $|C_y| \geq \nu' m$.

Let $\Cols':= \Cols \sm \sS(E(H^<))$ and let $\mc{V}':=\{V_1',\ldots,V_r'\}$ where $V_i':= V_i\sm\tau(X)$ for each $i\in [r]$. 
By Lemma~\ref{lm:tempslice}(ii) applied with $\aA = \DD\sqrt{\dD_\ell}$, the template $(\mc{V}',\Cols',\bm{J}')$ induced by $\mc{V}',\Cols'$ is super with parameters $(m/2,2\eps,\aA^4/144,\dD/2)$.
Let $H^> := H-X$.
Note that $H^>$ (with respect to its slightly smaller vertex set) is $2\mu$-separable.
Now we randomly partition $\Cols'$ into $\binom{r}{2}$ (some perhaps empty) parts $\Cols'=\bigcup_{ij\in E(K_r)} \Cols''_{ij}$ such that $|\Cols_{ij}''|=e(H^{>}[V_i',V_j'])$
for all $ij \in \binom{[r]}{2}$. 
This is possible since $|\Cols'|=|\Cols|-e(H^<)=e(H)-e(H^<)=e(H^>)$.
Let $\mc{F}'' := (\mc{V}',\Cols'',\bm{J}')$, 
where each $\Cols''_{ij}$ is as above (so $\Cols'' = \Cols'$).
Each colour set is either large or empty, indeed, if $ij \in P^<$, then $\Cols_{ij}''=\emptyset$, 
but otherwise we have
$$
|\Cols_{ij}''| \geq e(H[A_i,A_j])-e(H^<) \geq \dD_{\ell+1}n-\DD\sqrt{\dD_\ell}m \geq \dD_{\ell+1}n/2 \geq \dD_{\ell+1}|\Cols'|/(2\DD) \geq \dD_{\ell+1}^2|\Cols'|.
$$
Lemma~\ref{lm:tempslice}(iii) applied with $\dD_{\ell+1}^2,1$ playing the roles of $\aA,k$ 
implies that $\mc{F}''$
is a rainbow super template with parameters $(\dD_{\ell+1}^2 m/2,\sqrt{\eps},\aA^9,\dD/2)$.
Apply Theorem~\ref{th:blowup} (transversal blow-up lemma) with target sets $C_y$ of size at least $\nu'm$ for the at most $\sqrt{\dD_{\ell}}m \leq \dD_\ell^{1/3}\dD_{\ell+1}^2 m/2$ vertices $y \in Y$,
and parameters
$\dD_{\ell+1}^2 m/2,\sqrt{\eps},2\mu,\dD_1^{1/3},\nu',\aA^9,\dD/2$ playing the roles of
$m,\eps,\mu,\aA,\nu,d,\dD$
to obtain a transversal embedding of $H^>$ inside $\mc{F}''$ such that every $y \in Y$ is embedded inside $C_y$.
Together with the embedding of $H^<$, this gives a transversal embedding of $H$.
\end{proof}

\begin{proof}[Proof of Theorem~\ref{th:quasi3}]
Let $\DD,d,\aA>0$ be given.
Without loss of generality, we may assume that $\aA \ll d,1/\DD$.
Choose constants $\eta',\mu,n_0>0$ such that the conclusion of Theorem~\ref{th:quasi} holds, applied with $\DD,d,1/5,\aA^3$ playing the roles of $\DD,d,\dD,\aA$.
Let $\eta := \eta'/(2\DD)$.

Let $n \geq n_0$ be an integer and let $G$ be a $(d,\eta)$-dense $3$-graph on $n$ vertices with $d_G(v) \geq \aA n^2$ for all $v \in V(G)$.

Let $H$ be a $\mu$-separable graph with $\DD(H) \leq \DD$ and $v(H)+e(H) \leq n$.
First, if $H$ is small, we will enlarge it, as follows.
If $e(H) > n/4-1$, let $H' := H$.
Otherwise, obtain $H'$ from $H$ by successively adding an edge between isolated vertices until $e(H)>n/4-1$.
Let $H'$ be the obtained graph restricted to non-isolated vertices.
We have $e(H') \leq n/4$, which implies that
$v(H')+e(H') \leq 4e(H') \leq n$.

In both cases, we have $\DD(H') \leq \DD$, and $H'$ is $\mu$-separable;
$e(H')>n/4-1$ and $v(H') \geq 3e(H')/\DD > 3(n-4)/(4\DD)>n/(2\DD)$;
and $v(H')+e(H') \leq n$.

Let $V,\Cols$ be disjoint subsets of $V(G)$ chosen uniformly at random subject to
$$
|V|=v(H') > n/(2\DD)\quad\text{and}\quad |\Cols|=e(H') > n/4-1.
$$
Standard Chernoff-type bounds imply that, with high probability,
for every $v \in V$ we have 
\begin{align*}
    e(G[\{v\},V,\Cols]) &\geq \aA^2|V||\Cols|/6 \geq \aA^2n^2/(49\DD) \geq \aA^3n^2\quad\text{for all }v \in V\\
    |\{xyc \in E(G): x,y \in V\}| &\geq \aA^2|V|^2/12 \geq \aA^2n^2/(48\DD^2) \geq \aA^3 n^2
    \quad\text{for all }c \in \Cols.
\end{align*}
 For each $c \in \Cols$, let $G_c$ be the $2$-graph with vertex set $V$ and edge set $\{xy: x,y \in V\text{ and } xyc \in E(G)\}$, and let $\bm{G}:=(G_c: c \in \Cols)$. Thus
$\sum_{c \in \Cols}d_{G_c}(v,V)=e(G[\{v\},V,\Cols]) \geq \aA^3n^2$, and for every $c \in \Cols$ we have $e(G_c) \geq \aA^3n^2$.
Since $G$ is $(d,\eta)$-dense and $|V| \geq n/(2\DD)$, we have that $\bm{G}$ is $(d,\eta')$-dense.
Theorem~\ref{th:quasi} applied with $\DD,d,1/5,\aA^3$ playing the roles of $\DD,d,\dD,\aA$ implies that $\bm{G}$
contains a transversal copy of $H'$ and thus a transversal copy of $H$, that is, there are injective maps $\tau: V(H) \to V$ and $\sS: E(H) \to \Cols$ so that $\tau(x)\tau(y) \in G_{\sS(xy)}$ for every $xy \in E(H)$.
Define $\rho: V(H) \cup E(H) \to V(G)$ by setting $\rho(x):=\tau(x)$ for $x \in V(H)$ and $\rho(e):=\sS(e)$ for $e \in E(H)$.
Then $\rho$ is injective and $\rho(x)\rho(y)\rho(xy) \in E(G)$ for all $xy \in E(H)$; that is, $\rho$ defines a copy of the $1$-expansion of $H$ in $G$, as required.
\end{proof}

\section{Concluding remarks}\label{sec:conclude}

In this paper, we have proved a transversal blow-up lemma that embeds separable graphs into graph collections, which can be used to apply the regularity-blow-up method to transversal embedding problems. We conclude with some remarks on future directions.

\medskip
\noindent
\textbf{Separability.}
In our proof of the transversal blow-up lemma, the separability condition is necessary because we need to divide the graph into linear-sized pieces and then use the blow-up lemma to embed them piece by piece. For this embedding process to work, the number of edges between different pieces should not be too large. We wonder if our transversal blow-up lemma can be generalised to embed any graph with bounded maximum degree. If such a version can be proven, it would directly generalise the original blow-up lemma for graphs (since a collection of identical superregular pairs is a superregular collection).

\medskip
\noindent
\textbf{Future applications to transversal embedding.}
In a subsequent paper, we will utilise the transversal blow-up technique developed in this paper in combination with the absorption method to provide a new proof of the transversal version of the approximate P{\'o}sa-Seymour conjecture, which was recently established in \cite{GHMPS}, and is a special case of the yet more recent main result of~\cite{CIKL}. Additionally, we will prove a stability result for transversal Hamilton cycles that has not been demonstrated before. 

However, using our method to obtain transversal versions of embedding results proved using the regularity blow-up method does not seem quite as straightforward as simply following the same proof. The regularity lemma produces some exceptional vertices which need to be incorporated into the structure built between vertex clusters: they are `absorbed' by clusters.
For transversal embedding of a spanning graph $H$, one needs to insert the exceptional vertices as well as some exceptional colours into the structure built between vertex clusters and colour clusters. Such an insertion may make some new colours exceptional.
Right at the end of the process, inserting vertices and colours simultaneously becomes difficult, when there are no more `spare colours'.
Thus, we need to construct an absorption set for the remaining vertices and colours prior to embedding $H$.

\medskip
\noindent
\textbf{Applications to hypergraph embedding.}
We state the full $3$-graph version of our transversal blow-up lemma. %, which states that a weakly superregular triple contains the $1$-expansion of a separable graph.

\begin{theo}[weak $3$-graph blow-up lemma]\label{th:blowup3}
Let $0 < 1/m \ll \eps,\mu,\aA \ll \nu,d,\dD,1/\DD,1/r \leq 1$.
\begin{itemize}
\item Let $R$ be a $2$-graph with vertex set $[r]$.
\item Let $G$ be a $3$-graph with parts $V_1,\ldots,V_r$ and $V_{ij}$ for $ij \in E(R)$ where $m \leq |V_i|\leq m/\dD$ for all $i \in [r]$,
and $|V_{ij}|\geq \dD m$ for all $ij \in [r]$.
Suppose that $G[V_i,V_j,V_{ij}]$ is weakly $(\eps,d)$-(half-)superregular for all $ij \in E(R)$.
\item Let $H$ be a $\mu$-separable $2$-graph with $\DD(H) \leq \DD$ for which there is a graph homomorphism $\phi: V(H) \to V(R)$ with $|\phi^{-1}(i)|=|V_i|$ for all $i \in [r]$ and $e(H[\phi^{-1}(i),\phi^{-1}(j)]) = |V_{ij}|$ for all $ij \in E(R)$.
\item Suppose that for each $i \in [r]$ there is a set $U_i \subseteq \phi^{-1}(i)$ with $|U_i| \leq \aA m$ and $T_x \subseteq V_{\phi(x)}$ with $|T_x| \geq \nu m$ for all $x \in U_i$.
\end{itemize}
Then $G$ contains a copy of the $1$-expansion of $H$, where each vertex $x \in V(H)$ is mapped to $V_{\phi(x)}$ and the new vertex for $xy \in E(H)$ is mapped to $V_{\phi(x)\phi(y)}$; and moreover, for every $i \in [r]$ every $x \in U_i$ is mapped to $T_x$.
\end{theo}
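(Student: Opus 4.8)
The plan is to recognise Theorem~\ref{th:blowup3} as the hypergraph reformulation of Theorem~\ref{th:blowupintro}, using the correspondence between $3$-graphs and graph collections that runs through the paper. Given $G$ as in the statement, for each $ij \in E(R)$ and each $c \in V_{ij}$ let $G_c$ be the bipartite graph with parts $V_i,V_j$ and edge set $\{xy : x \in V_i,\, y \in V_j,\, xyc \in E(G)\}$, set $\Cols_e := V_e$ for every $e \in E(R)$, and let $\Cols := \bigcup_{e \in E(R)}\Cols_e$. Since the parts $V_e$ of $G$ are pairwise disjoint, this union is a partition, and $\bm{G} := (G_c : c \in \Cols)$ is a graph collection in which $G_c = \bigcup_{e : c \in \Cols_e}G_c^e$. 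By construction the $3$-graph of $\bm{G}^{ij} := (G_c[V_i,V_j] : c \in \Cols_{ij})$ is exactly $G[V_i,V_j,V_{ij}]$, so the hypothesis that each $G[V_i,V_j,V_{ij}]$ is weakly $(\eps,d)$-half-superregular is, by definition, precisely the statement that each $\bm{G}^{ij}$ is $(\eps,d)$-half-superregular; and in the superregular case one instead gets $(\eps,d-\eps)$-half-superregularity, which is covered after a harmless initial adjustment of the parameter $d$. As $m \leq |V_i| \leq m/\dD$ for all $i$ and $|\Cols_e| = |V_e| \geq \dD m$ for all $e$, the tuple $\mc{F} := (\mc{V},\Cols,\bm{G})$ with $\mc{V} := \{V_1,\ldots,V_r\}$ is a rainbow half-super $R$-template with parameters $(m,\eps,d,\dD)$ (or $(m,\eps,d-\eps,\dD)$ in the superregular case).

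Next I would check that the conditions on $H$ transfer verbatim. The homomorphism $\phi : V(H) \to V(R)$ is unchanged, and writing $A_i := \phi^{-1}(i)$, the assumptions $|A_i| = |V_i|$ and $e(H[A_i,A_j]) = |V_{ij}| = |\Cols_{ij}|$ are exactly the vertex- and edge-counting conditions of Theorem~\ref{th:blowupintro}; $\mu$-separability and $\DD(H) \leq \DD$ are identical hypotheses; and the sets $U_i \subseteq A_i$ with $|U_i| \leq \aA m$ together with the target sets $T_x \subseteq V_{\phi(x)} = V_i$, $|T_x| \geq \nu m$, match its final bullet point. Hence Theorem~\ref{th:blowupintro} applies and yields a transversal embedding of $H$ into $\bm{G}$: injective maps $\tau : V(H) \to V$ and $\sS : E(H) \to \Cols$ with $\tau(x)\tau(y) \in G_{\sS(xy)}$ for every $xy \in E(H)$, with $\sS(xy) \in \Cols_{\phi(x)\phi(y)}$ for every edge, and with $\tau(x) \in T_x$ for every $x \in U_i$, $i \in [r]$ (here $V := \bigcup_{i \in [r]}V_i$).

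Finally I would translate this back. Define $\rho : V(H) \cup E(H) \to V(G)$ by $\rho(x) := \tau(x)$ for $x \in V(H)$ and $\rho(xy) := \sS(xy)$ for $xy \in E(H)$. Injectivity of $\rho$ follows from injectivity of $\tau$ and of $\sS$ together with the fact that their images lie in the disjoint sets $V = \bigcup_i V_i$ and $\Cols = \bigcup_{e \in E(R)}V_e$. For each $xy \in E(H)$, writing $ij := \phi(x)\phi(y) \in E(R)$, we have $\sS(xy) \in \Cols_{ij} = V_{ij}$, so the new vertex for $xy$ lands in $V_{ij}$, and $\tau(x)\tau(y) \in G_{\sS(xy)}$ is by definition equivalent to $\rho(x)\rho(y)\rho(xy) \in E(G)$. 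Thus $\rho$ realises the $1$-expansion of $H$ inside $G$ with $x$ mapped to $V_{\phi(x)}$, each new vertex for $xy$ mapped to $V_{\phi(x)\phi(y)}$, and each $x \in U_i$ mapped into $T_x$, as required. I do not anticipate a real obstacle: all the work is in Theorem~\ref{th:blowupintro}, and the only points needing care are the (immediate) identification of the $3$-graph of $\bm{G}^{ij}$ with $G[V_i,V_j,V_{ij}]$, and the routine passage from the superregular to the half-superregular hypothesis, handled exactly as in the proof of Theorem~\ref{th:blowupintro} (alternatively via Lemma~\ref{lm:tempslice}(iv), equivalently Lemma~\ref{lm:hsrtosr}).
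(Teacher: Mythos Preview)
Your proposal is correct and follows exactly the approach the paper takes: the paper remarks immediately after the statement that Theorem~\ref{th:blowup3} ``is a reformulation of Theorem~\ref{th:blowupintro} since for each $ij \in E(R)$, the $3$-graph $G^{(3)}_{ij}$ of $(G_c: c \in \Cols_{ij})$ in Theorem~\ref{th:blowupintro} is weakly half-superregular, so we simply take $V_{ij} := \Cols_{ij}$.'' You have spelled out this identification in full, including the translation back to the $1$-expansion via $\rho$, and correctly handled the superregular case by passing to $(\eps,d-\eps)$-half-superregularity.
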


This is a reformulation of Theorem~\ref{th:blowupintro} since for each $ij \in E(R)$, the $3$-graph $G^{(3)}_{ij}$ of $(G_c: c \in \Cols_{ij})$ in Theorem~\ref{th:blowupintro} is weakly half-superregular, so we simply take $V_{ij} := \Cols_{ij}$.
It would be interesting to extend Theorem~\ref{th:quasi3} on embeddings in uniformly dense $3$-graphs beyond $1$-expansions of separable $2$-graphs.
%As mentioned in Section~\ref{sec:quasi}, for, say, $R$ a single edge, a $3$-graph $G$ as in the statement of Theorem~\ref{th:blowup3} will not contain
It is not clear which $3$-graphs one should expect to be able to embed in a weakly superregular triple (or in a uniformly dense $3$-graph).
Even the case of $3$-partite $3$-graphs seems difficult; that is, to extend Theorem~\ref{lm:stran} (simplified weak hypergraph blow-up lemma).

\begin{prob}
Given $0<1/n \ll \eps \ll d \ll \dD \leq 1$, which $J$ are subhypergraphs of any weakly $(\eps,d)$-superregular triple $G[V_1,V_2,V_3]$, where $\dD n \leq |V_1|,|V_2|,|V_3| \leq n/\dD$?
\end{prob}

A $3$-partite $3$-graph $J$ is equivalent to an edge-coloured bipartite $2$-graph $\mc{J}$ (where each edge can receive multiple colours) using our usual identification of one part with a set of colours.
If $J$ has $\DD(J) \leq \DD$, then $\DD(\mc{J}) \leq \DD$ and the colouring is $\DD$-bounded.
(If $J$ is linear, then the edge-colouring is proper.)
Thus we would like to extend Theorem~\ref{th:bip} (simplified weak transversal blow-up lemma). to find an embedding of $\mc{J}$ with a given edge-colouring up to permutation of colours.
If every colour only appears on a single edge, that is, $J$ is an expansion of some $2$-graph $H$ where the $2$-edge $e$ is replaced by some bounded number $t_e \geq 1$ of $3$-edges (which is not linear if some $t_e>1$), it seems plausible that our techniques would work.
However, the colour absorption we use from~\cite{MMP} does not seem to be able to deal with colours playing multiple roles.

Let $F$ be a $3$-partite $3$-graph of fixed size.
Recall that an old result of Erd\H{o}s~\cite{erdos1964extremal} implies that any large $3$-graph of positive density contains a copy of $F$.
However, the results of~\cite{ding2022f} imply that any large uniformly dense $3$-graph $G$ (whose number of vertices is a multiple of $v(F)$ and where every vertex sees a positive fraction of pairs) contains an $F$-factor if and only if there is a vertex $v^* \in V(F)$ such that for any two edges $e,e'$ where $e$ contains $v^*$ and $e'$ does not,
$e$ and $e'$ share at most one vertex.
Probably, the characterisation for weakly superregular triples with balanced classes is the same.
%For any other $F$, for example $F=K_{2,2,2}$, $G$ with the above conditions may not contain an $F$-factor.

%However, it is not true that any (spanning) bounded degree $3$-partite $3$-graph appears as a subgraph in a weakly superregular triple.
Another instructive case is the tight Hamilton cycle (whose number of vertices is divisible by $6$). As a graph collection problem, this corresponds to finding a ($2$-graph) Hamilton cycle on $4n$ vertices with $2n$ colours, where, cyclically labelling the edges $e_1,\ldots,e_{4n}$, the consecutive edges $e_{2c-1},e_{2c},e_{2c+1}$ are all in $G_c$, for each $c \in [2n]$ where indices are taken modulo $4n$. A construction in~\cite{lenz2016perfect} (see also~\cite{lenz2016hamilton}) shows that a weakly superregular triple need not contain a tight Hamilton cycle, even if its density is close to $\frac{1}{8}$.

\begin{eg}
Let $V$ be a vertex set of size $6n$, and let $V=V_1 \cup V_2 \cup V_3$ be a partition, where $V_1,V_2,V_3$ have equal size $2n$, and let $X \subseteq V$ where $|X \cap V_1|$ is odd.
Independently for each distinct $i,j \in [3]$ add uniform random edges between parts $V_i,V_j$ with probability $\frac{1}{2}$, to obtain a $3$-partite $2$-graph $J$.
Now form a $3$-partite $3$-graph $G$ by, for each triple $abc \in A \times B \times C$, as follows:
\begin{itemize}
    \item add $abc$ to $G$ if $|\{a,b,c\} \cap X|$ is even and $\{a,b,c\}$ spans a triangle in $J$.
    \item add $abc$ to $G$ if $|\{a,b,c\} \cap X|$ is odd and $\{a,b,c\}$ spans an independent set in $J$.
    \end{itemize}
Suppose $v^1_1v^1_2v^1_3\ldots v^{2n}_1v^{2n}_2v^{2n}_3$ is a tight cycle $H$ in $J$, where without loss of generality, $v^j_i \in V_i$ for all $j$. For any $1 \leq j < 2n$, Since $v^j_1v^j_2v^j_3, v^j_2v^j_3v^{j+1}_1$ are edges of $H$, we have that
$|\{v^j_1,v^j_2,v^j_3\} \cap X|, |\{v^{j+1}_1,v^j_2,v^j_3\} \cap X|$ are both odd or both even. This implies that $v^j_1,v^{j+1}_1$ are either both in $X$ or neither in $X$. 
By considering the disjoint pairs $v_1^1v_1^2,v_1^3v_1^4,\ldots,v_1^{2n-1}v_1^{2n}$,
this is a contradiction to $|X \cap V_1|$ odd.

It can easily be checked using Chernoff bounds (see~\cite{lenz2016perfect}) that, if every $|X \cap V_i|$ has size about $n$, then with high probability $G$ is weakly $(\frac{1}{8},o(1))$-superregular.
\end{eg}

\bibliographystyle{plain}
\bibliography{Bibte}

\appendix

\section{Appendix}

\subsection{Weak hypergraph regularity lemmas}

We first introduce the following special form of the weak hypergraph regularity lemma for $k$-partite $k$-graphs. Its proof follows easily from the original lemma of Chung~\cite{fanweak}, which is very similar to the $2$-graph regularity lemma of Szemer\'edi~\cite{Szemeredi}.

\begin{lemma}\label{lm:3pweakreg}
For every $L_0,k\geq 1$ and every $\eps,\dD>0$, there is an $n_0>0$ such that for every $k$-partite $k$-graph $G$ on $n \geq n_0$ vertices with parts $V_1,\ldots,V_k$ where $\dD n\leq |V_i|\leq n/\dD$, there exists a refined partition $V_i=\bigcup_{j=1}^{t_i}V_{i,j}$ for each $i\in [k]$ such that the following properties hold:
\begin{enumerate}
\item[(i)] $L_0 \leq t:=t_1+\ldots+t_k \leq n_0$;
\item[(ii)] $\left| |V_{i,j}|-|V_{i',j'}|\right| \leq 1$ for any $i,i'\in[k]$, $j\in [t_i]$ and $j'\in [t_{i'}]$;
\item[(iii)] all but at most $\eps t^k$ $k$-tuples $(V_{1,j_1},\ldots,V_{k,j_k})$ are $\eps$-regular.
\end{enumerate}
\end{lemma}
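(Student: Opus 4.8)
Lemma~\ref{lm:3pweakreg} is the $k$-partite version of Chung's weak regularity lemma, and the plan is to prove it by the usual energy-increment (mean-square-density) argument, arranged so that every refinement only subdivides clusters \emph{within} a single part $V_i$; the output partition then automatically refines $\{V_1,\ldots,V_k\}$. For a partition $\mc{P}=(\mc{P}_1,\ldots,\mc{P}_k)$, where $\mc{P}_i$ is a partition of $V_i$, I would work with the index
\[
q(\mc{P}):=\frac{1}{|V_1|\cdots|V_k|}\sum_{(A_1,\ldots,A_k)}|A_1|\cdots|A_k|\,d_G(A_1,\ldots,A_k)^2\ \in[0,1],
\]
the sum ranging over the transversal tuples $A_i\in\mc{P}_i$; any non-transversal $k$-tuple of clusters has density $0$ since $G$ is $k$-partite, hence is automatically $\eps$-regular, so it suffices to control the transversal ones. (Equivalently, one can simply add $\{V_1,\ldots,V_k\}$ to the initial partition and run Chung's proof verbatim.)

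\textbf{Key steps.}
First, fix an equitable initial partition, splitting each $V_i$ into roughly $\lceil L_0/k\rceil$ nearly equal parts (of size $\gg\dD n/L_0$), so that the initial, hence final, number of parts is at least $L_0$. Second, the increment step: if the $(|A_1|\cdots|A_k|)$-weighted proportion of transversal tuples that fail to be $\eps$-regular exceeds $\eps$, then inside each $V_i$ replace every cluster by the common refinement of the irregularity-witnessing subsets of all bad tuples containing it; the defect form of Cauchy--Schwarz shows a single bad tuple raises its block's contribution to $q$ by at least $\eps^{k+2}\prod_i(|A_i|/|V_i|)$, so the simultaneous refinement raises $q$ by at least $c_k\eps^{k+3}$, and since $q\le1$ the process stops after at most $O_k(\eps^{-(k+3)})$ steps. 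Third, bound the number of parts: a cluster lies in at most $t^{k-1}$ transversal tuples, hence is cut into at most $2^{t^{k-1}}$ pieces in one step, so after boundedly many steps the total count $t$ is at most a tower-type function $n_0=n_0(\eps,\dD,L_0,k)$; throughout, discard any piece of size below a tiny threshold (merging it into a sibling), which perturbs every relevant density negligibly and is absorbed by running everything with, say, $\eps^{8}$ in place of $\eps$. Finally, re-partition to obtain condition~(ii), and deduce the bound $\le\eps t^k$ in~(iii) from $\prod_i t_i\le t^k$ together with the triviality of non-transversal tuples.

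\textbf{Main obstacle.}
The conceptual content is entirely standard; the only delicate point is reconciling ``refine only within the $V_i$'' with ``all clusters of nearly equal size across all parts'', since the witness-driven refinements produce pieces of very uneven sizes and equitability across parts cannot be maintained step by step. The standard remedy is to carry a small exceptional set $V_0=\bigcup_i(V_0\cap V_i)$, re-equalise within each $V_i$ after every step while adding only $o(n)$ vertices to $V_0$ per step (so $|V_0|\le\eps n$ throughout, provided the initial number of clusters is large enough), and redistribute $V_0$ at the end: adding at most $\sqrt\eps$ times its size to any cluster changes its densities by $O(\sqrt\eps)$ and keeps an $\eps$-regular tuple $O(\eps^{1/4})$-regular, which is the reason for working with $\eps^{8}$. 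When the lemma is applied with the parts $V_i$ of (nearly) equal size --- as it is below, the $V_i$ being clusters produced by an earlier application of regularity --- this equalisation is immediate, as one simply splits each $V_i$ into the same number of equal pieces.
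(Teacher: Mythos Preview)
Your outline is correct and is essentially the argument the paper has in mind: the paper does not actually prove Lemma~\ref{lm:3pweakreg} but simply says its proof ``follows easily from the original lemma of Chung~\cite{fanweak}, which is very similar to the $2$-graph regularity lemma of Szemer\'edi'', and your energy-increment sketch with refinements confined to each $V_i$ is precisely that standard proof adapted to the $k$-partite setting. The technical point you flag about equitability across parts is real but routine, and your treatment (carry a small exceptional set, re-equalise after each step, redistribute at the end with a slack in $\eps$) is the usual fix.
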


The following weak hypergraph regularity lemma~\cite{KOT} is proved in the same way as the original
degree form of the regularity lemma (see~\cite{Townsend}), which in turn can be derived from the standard regularity lemma via some cleaning.

\begin{theo}[Degree form of the weak hypergraph regularity lemma]\label{lm:weakreg}
For all integers $L_0 \geq 1$ and every $\eps>0$, there is an $n_0=n_0(\eps,L_0)$ such that
for every $d \in [0,1)$ and for every $3$-graph $G=(V,E)$ on $n \geq n_0$ vertices there
exists a partition of $V$ into $V_0,V_1,\ldots,V_L$ and a spanning subhypergraph $G'$ of $G$ such
that the following properties hold:
\begin{enumerate}
\item[(i)] $L_0 \leq L \leq n_0$ and $|V_0| \leq \eps n$;
\item[(ii)] $|V_1|=\ldots=|V_L|=: m$;
\item[(iii)] $d_{G'}(v) > d_G(v)-(d+\eps)n^{2}$ for all $v \in V$;
\item[(iv)] every edge of $G'$ with more than one vertex in a single cluster $V_i$ for some $i \in[L]$
has at least one vertex in $V_0$;
\item[(v)] for all triples $\{h,i,j\} \in \binom{[L]}{3}$, we have that
$(V_h,V_i,V_j)_{G'}$ is either empty or $(\eps,d)$-regular.
\end{enumerate}
\end{theo}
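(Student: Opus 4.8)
The plan is to deduce Theorem~\ref{lm:weakreg} from the ordinary weak hypergraph regularity lemma of Chung~\cite{fanweak} by the same cleaning procedure that turns the index form of Szemer\'edi's regularity lemma into its degree form; for $2$-graphs this is carried out in~\cite{Townsend}, and the argument transfers essentially verbatim.

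First I would fix an auxiliary constant $\eps_1$ with $0<\eps_1\ll\eps$ and an auxiliary lower bound $L_0':=\max\{L_0,\lceil 10/\eps\rceil\}$, apply Chung's lemma to $G$ with parameters $\eps_1$ and $L_0'$ to obtain an equipartition $V=W_0\cup W_1\cup\ldots\cup W_K$ with $|W_0|\le\eps_1 n$, common part size $s:=|W_1|=\ldots=|W_K|$, $L_0'\le K\le n_1$ for the constant $n_1=n_1(\eps_1,L_0')$ from that lemma, and all but at most $\eps_1 K^3$ triples $\{h,i,j\}\in\binom{[K]}{3}$ being $\eps_1$-regular. I would then move all ``atypical'' parts of the partition into the exceptional set: a cluster is \emph{bad} if it lies in more than $\sqrt{\eps_1}K^2$ non-$\eps_1$-regular triples (a double count of the $\le 3\eps_1 K^3$ bad incidences bounds their number by $3\sqrt{\eps_1}K$), and a vertex of a good cluster is \emph{bad} if the number of edges through it lying in non-regular triples or in triples of density below $d$ is atypically large — a one-line averaging from $\eps_1$-regularity limits how many such vertices each offending triple can contribute. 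Setting $V_0$ to be $W_0$ together with all bad clusters and all bad vertices, relabelling the surviving clusters as $V_1,\ldots,V_L$ and putting $m:=s$ gives $L_0\le L\le n_1$ and $m=|V_1|=\ldots=|V_L|$; checking $|V_0|\le\eps n$ then yields (i) and (ii) once $n_0$ is taken large in terms of $\eps$ and $L_0$. Finally I would take $G'$ to be $G$ with all edges deleted that either (a) have two vertices in a common cluster $V_i$, $i\ge1$, and none in $V_0$, or (b) have one vertex in each of three distinct clusters whose triple is not $\eps_1$-regular, or (c) have one vertex in each of three distinct clusters whose triple is $\eps_1$-regular of density less than $d$ — crucially, no edge touching $V_0$ is deleted.

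Properties (iv) and (v) are then immediate: (iv) holds by the shape of the deletions in~(a), and (v) holds because an edge of a cluster-triple is deleted only on account of that very triple, so each surviving cluster-triple of $G'$ is either empty or unchanged from $G$ — hence $\eps_1$-regular (so $\eps$-regular) of density at least $d$, i.e.~$(\eps,d)$-regular. For~(iii): if $v\in V_0$ then no edge through $v$ is deleted, so $d_{G'}(v)=d_G(v)$; if $v\in V_i$ with $i\ge1$, then the edges through $v$ deleted of type~(a) number at most $sn+Ls^2\le\eps n^2/3$ since $s\le n/L_0'\le\eps n/10$, those of type~(b) at most $\sqrt{\eps_1}K^2 s^2\le\sqrt{\eps_1}n^2\le\eps n^2/3$ since $V_i$ is good and $Ks\le n$, and those of type~(c) at most $d n^2+\eps n^2/3$ since $v$ is not bad; adding these gives a loss strictly below $(d+\eps)n^2$.

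The one step that is more than bookkeeping is the degree bound~(iii). Because weak regularity gives no control on the degree of an individual vertex inside a regular triple, a vertex can in principle have $\Omega(n^2)$ of its edges inside low-density or irregular triples, and the per-vertex loss can only be kept below $(d+\eps)n^2$ by first removing such vertices into $V_0$; the delicate part is then to verify — via the averaging bounds on how many bad vertices each triple spawns, together with the ordering $\eps_1\ll\eps$ and $L_0'\gg1/\eps$ — that the total number of bad vertices and bad clusters removed keeps $|V_0|\le\eps n$. This last verification is exactly the content of the cleaning argument in~\cite{Townsend}.
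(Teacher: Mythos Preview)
The paper does not actually give a proof of Theorem~\ref{lm:weakreg}: it is quoted from~\cite{KOT}, with the remark that it ``is proved in the same way as the original degree form of the regularity lemma (see~\cite{Townsend}), which in turn can be derived from the standard regularity lemma via some cleaning.'' Your proposal carries out precisely this cleaning derivation from Chung's weak regularity lemma, so it is in line with what the paper indicates.

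Your outline is essentially correct; only two bookkeeping points deserve attention. First, after moving the bad vertices of each good cluster into $V_0$, the surviving clusters no longer have common size $s$, so you cannot simply set $m:=s$; you must additionally trim each surviving cluster by a few arbitrary vertices (at most $\sqrt{\eps_1}s$, say) to restore equal sizes, and then invoke the slicing lemma to see that the trimmed triples remain $(\eps,d)$-regular. Second, your stated type-(c) bound $dn^2+\eps n^2/3$ together with the type-(a) and type-(b) bounds only gives a loss of at most $(d+\eps)n^2$, not strictly less; in fact the averaging you describe yields the sharper bound $(d+\eps_1)n^2/2 + \sqrt{\eps_1}n^2$ for type~(c), which comfortably gives the strict inequality in~(iii). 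Neither point is a genuine gap.
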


The proof of Lemma~\ref{lm:weakregcol} (the regularity lemma for graph collections) is a routine but tedious consequence of this theorem applied to the $3$-graph $G^{(3)}$ of a graph collection $\bm{G}$.

\begin{proof}[Proof of Lemma~\ref{lm:weakregcol}]
By increasing $L_0$ and decreasing $\eps,\dD$ as necessary, we may assume without loss of generality that $0 < 1/L_0 \ll \eps \ll \dD \ll 1$.
Let $L_0' := 2L_0/\dD$.
Choose new constants $\eps',\aA$, which we may assume satisfy $0 < 1/L_0' \ll \eps' \ll \aA \ll \eps$.
Let $n_0'$ be obtained from Theorem~\ref{lm:weakreg} applied with parameters $\eps',L_0'$.
We may assume that $1/n_0' \ll 1/L_0'$ by increasing $n_0'$.
Altogether,
$$
0 < 1/n_0' \ll 1/L_0' \ll \eps' \ll \aA \ll \eps \ll \dD \ll 1.
$$
Let $n_0 := 2n_0'/\aA$.
Let $n \geq n_0$ be an integer and suppose that $\bm{G}$ is a graph collection on a vertex set $V$ of size $n$ and with colour set $\Cols$, where $\dD n \leq |\Cols| \leq n/\dD$.
Let $d \in [0,1)$ and let $G^{(3)}$ be the $3$-graph of $\bm{G}$ with vertex set $U := V \cup \Cols$.
%Let $d' := \dD d/2$.
Theorem~\ref{lm:weakreg} implies that there is a partition $U_0,U_1,\ldots,U_K$ of $U$
and a spanning subhypergraph $G^{(3)'}$ of $G^{(3)}$ such that
\begin{enumerate}[label=(\alph*),ref=(\alph*)]
\item\label{reg1} $L_0' \leq K \leq n_0'$ and $|U_0| \leq \eps'|U|$;
\item\label{reg2} $|U_1|=\ldots=|U_K|=: m'$;
\item\label{reg3} $d_{G^{(3)'}}(u) > d_{G^{(3)}}(u)-(2d+\eps')|U|^2$ for all $u \in U$;
\item\label{reg4} every edge of $G^{(3)'}$ with more than one vertex in a single cluster $U_i$ has at least one vertex in $U_0$;
\item\label{reg5} for all triples $\{h,i,j\} \in \binom{[K]}{3}$, we have that $(U_h,U_i,U_j)_{G^{(3)'}}$ is either empty or $(\eps',2d)$-regular.
\end{enumerate}
Partition each cluster $U_i$, $i \in [K]$, into $1/\aA$ subclusters of size at most $m$ so that all but at most two subclusters of $U_i$ have size exactly $m$ and the property that they lie entirely within $V$, or within $\Cols$.
If a subcluster does not have this property, add it to $U_0$.
The new exceptional set has size at most $|U_0|+2\aA m'K$, and we let $V_0$ be its intersection with $V$, and $\Cols_0$ its intersection with $\Cols$.
Relabel the subclusters so that those which are subsets of $V$ are $V_1,\ldots,V_L$ and those which are subsets of $\Cols$ are $\Cols_1,\ldots,\Cols_M$.
Let $G'_c$ be the graph with vertex set $V$ and edge set $\{xy: xyc \in G^{(3)'}\}$ for all $c \in \Cols$.
We claim that the properties of the lemma are satisfied.

For~(i), we have $m'K \leq |U|$ so
$$
|V_0|+|\Cols_0| \leq |U_0|+2\aA m'K \stackrel{\ref{reg1}}{\leq} (\eps'+2\aA)|U| \leq (\eps'+2\aA)(n+n/\dD) \leq \eps n.
$$
Also,
$L_0 \leq L_0' \leq L_0'/\aA \leq K/\aA \leq L+M \leq 2K/\aA \leq 2n_0'/\aA=n_0$,
proving the required upper bound for both $L$ and $M$.
Furthermore, $m(L+M) \leq n+|\Cols|$
so
$$
L \geq \frac{n-|V_0|}{m }\geq \frac{(n-|V_0|)(L+M)}{n+|\Cols|} \geq \frac{(n-|V_0|)L_0'}{n+n/\dD} \geq \dD L_0'/2 = L_0
$$
and similarly for $M$.
Part~(ii) follows by construction.

For~(iii), we have $\sum_{c \in \Cols}d_{G_c'}(v)=d_{G^{(3)'}}(v)$ for all $v \in V$, and $e(G_c')=d_{G^{(3)'}}(c)$ for all $c \in \Cols$,
and similarly for $G_c$. Further, $(2d+\eps')(n+|\Cols|)^2<(2d+\eps')(1+1/\dD)^2n^2\leq(3d/\dD^2+2\eps'/\dD^2)n^2 \leq (3d/\dD^2+\eps)n^2$. Thus~\ref{reg3} implies the required.

For~(iv), if $G'_c$ has an edge $xy$ with $x,y \in V_i$ for some $i \in [L]$, then $x,y \in U_{i'}$ for the cluster $U_{i'}$ containing $V_i$.  Since $xyc \in E(G^{(3)'})$,~\ref{reg4} implies that $c \in U_0$. Thus $c \in \Cols_0$, as required. 

For~(v), suppose that $(\{h,i\},j) \in \binom{[L]}{2}\times M$ and $\bm{G}'_{hi,j}$ is non-empty.
Let $h',i',j' \in [K]$ be such that $V_h \subseteq U_{h'}$, $V_i \subseteq U_{i'}$ and $\Cols_j \subseteq U_{j'}$.
Since $\bm{G}'_{hi,j}$ is non-empty, we have that $(U_{h'},U_{i'},U_{j'})_{G^{(3)'}}$ is non-empty.
So $h',i',j'$ are distinct by~\ref{reg4}. 
Part~\ref{reg5} implies that $(U_{h'},U_{i'},U_{j'})_{G^{(3)'}}$ is $(\eps',2d)$-regular.
Therefore, for each $c \in U_{j'}$, letting $J_c$ be the bipartite graph with partition $(U_{h'},U_{i'})$ and edge set $\{xy: x \in U_{h'}, y \in U_{i'}, xyc \in G^{(3)'}\}$, we have that $\bm{J} := (J_c: c \in U_{j'})$ is $(\eps',2d)$-regular.
Lemma~\ref{lm:slice}(i) (the slicing lemma) implies that $\bm{G}'_{hi,j}$ is $(\eps'/\aA,d)$-regular and thus $(\eps,d)$-regular.
This completes the proof.
\end{proof}

We conclude the appendix with a proof of Lemma~\ref{lm:hsrtosr}, which states that a half-superregular ($k$-graph) $k$-tuple contains a spanning superregular subhypergraph. The proof is the same as the one in~\cite{RR} for $k=2$.

\begin{proof}[Proof of Lemma~\ref{lm:hsrtosr}]
Choose a new parameter $\eps''$ such that $0<\eps \ll \eps'' \ll \eps'$.
Apply Lemma \ref{lm:3pweakreg} to $G$ with parameters $L_0 := 1, \eps'',\dD$ to obtain $n_0>0$. Increasing $n_0,n$ and decreasing $\eps$ if necessary, we may assume that $1/n \ll \eps \ll 1/n_0 \ll \eps''$.
Now let $G=(V_1,\ldots,V_k)_G$ be an $(\eps,d)$-half-superregular $k$-graph with $\dD n \leq |V_i| \leq n/\dD$ for all $i \in [k]$.
Lemma~\ref{lm:3pweakreg} implies that there is a refinement $V_i=\bigcup_{j=1}^{t_i}V_{i,j}$ for each $i\in [k]$ such that 
$t := t_1+\ldots+t_k \leq n_0$, every pair of subparts differ in size by at most one, and
all but at most $\eps'' t^3$ $k$-tuples $(V_{1,j_1},\ldots,V_{k,j_k})$ are $\eps''$-regular.

Obtain a spanning subhypergraph $G'$ of $G$ by, for each $\eps''$-regular triple $(X_1,\ldots,X_k)$ of subparts,
removing every $k$-edge in $(X_1,\ldots,X_k)$ independently at random with probability $1-d/d'$, where $d'$ is the density of $(X_1,\ldots,X_k)$. 
(Note that by $|X_i|\geq |V_i|/n_0 \geq \eps |V_i|$ for each $i\in [k]$, we have $d'\geq d$ due to half-superregularity.)
We claim that, with high probability,
\begin{itemize}
\item $d_{G'}(v)\geq d^2(|V_1|\ldots|V_k|)/(2|V_i|)$ for any $i\in[k]$ and $v\in V_i$,
\item any $\eps''$-regular $k$-tuple (of subparts) in $G$ is $2\eps''$-regular in $G'$, 
\item any $\eps''$-regular $k$-tuple (of subparts) in $G$ has density $d \pm 2\eps''$ in $G'$.
\end{itemize}
%For any $\eps$-regular triple $(X_1',X_2',X_3')$ 
%with $|X_i|\geq \eps |V_i|$ for $i\in [3]$, 
%let $Y(X_1',X_2',X_3')$ be the number of edges in $(X_1',X_2',X_3')$ after the deletion. Then 
%we have $\mb{E}[e_{G'}(X_1',X_2',X_3')]=d|X_1||X_2||X_3|$ 
%and thus by Chernoff's bound, we get $$\mb{P}[Y(X_1',X_2',X_3')=(d\pm 2\eps)|X_1||X_2||X_3|]\geq 1-e^{-\frac{2d\eps^2\dD^2n^3}{3}}.$$
%We do such random deletions for every $\eps$-regular triple of subparts. 
%For each $v \in V(G)$, let $Y(v)$ be the degree of $v$ after the deletions. Without loss of generality, suppose that $v\in V_1$. Then 
%and $\mb{E}[d_{G'}(v)]\geq d^2/d'|V_i||V_j|\geq d^2|V_i||V_j|$
%for every $v \in V_h$ with $\{h,i,j\}=[3]$. 
%and thus by Chernoff's bound we get:
%$$\mb{P}[Y_v\geq \frac{d^2}{2}|V_2||V_3|]\geq 1-e^{-\frac{d^2\dD^2n^2}{6}}.$$
%Therefore, when $n$ is sufficiently large, 
This follows from Chernoff bounds.

Now we claim that, given that the above hold, $G'$ is $(\eps',d^2/2)$-superregular. We only need to check that $G'$ is $(\eps',d^2/2)$-regular. Let $(A_1,\ldots,A_k)$ be any $k$-tuple where $A_i\subseteq V_i$ and $|A_i|\geq \eps'|V_i|$ for all $i\in [k]$. 
%We will evaluate $e_{G'}(A_1,A_2,A_3)$. 
For each $i\in [k]$, we have partition $A_i=\bigcup_{j=1}^{t_i}(A_i\cap V_{i,j})$. We say a part $A_{i,j} := A_i\cap V_{i,j}$ is \emph{big} if $|A_i\cap V_{i,j}|\geq \eps'' |V_{i,j}|$ and otherwise it is \emph{small}. Note that in total we have at most $\eps'' (|V_1|+\ldots+|V_k|)\leq \eps'' kn/\dD$ vertices in small parts. 
Also note that we have at most $\eps'' t^k$ non-regular $k$-tuples that contain in total at most $\eps'' (n/\dD)^k$ edges. 
Thus
\begin{align*}
e_{G'}(A_1,\ldots,A_k) = \sum_{\substack{(A_{1,j_1},\ldots,A_{k,j_k}) \text{ all big and}\\
(V_{1,j_1},\ldots,V_{k,j_k})~\eps''\text{-regular}}}e_{G'}(A_{1,j_1},\ldots,A_{k,j_k})
\pm \left(2\eps'' kn/ \dD \cdot (n/\dD)^{k-1} + \eps'' (n/\dD)^k\right).
\end{align*}
For every summand index $(j_1,\ldots,j_k)$, we have $e_{G'}(A_{1,j_1},\ldots,A_{k,j_k}) = (d\pm 2\eps'')|A_{1,j_1}|\ldots|A_{k,j_k}|$ by $2\eps''$-regularity.
Thus
$$
e_{G'}(A_1,\ldots,A_k) = (d \pm 2\eps'')|A_1|\ldots|A_k| \pm 3\eps'' kn^k/\dD^k 
= (d \pm \eps')|A_1|\ldots|A_k|.
$$
This implies that $G'$ is $(\eps',d^2/2)$-superregular and thus we finish the proof.
\end{proof}

\end{document}